\numberwithin{equation}{section}
\numberwithin{equation}{section}
\def \dis {\displaystyle}
\def \tex {\textstyle}
\def \limo {\;\;\mathop{\longrightarrow}_{\varepsilon\to 0}\;\;}
\def \MMMMI#1{-\mskip -#1mu\int}
\def \moy {\displaystyle\MMMMI{19.2}}
\def \confai {\;-\kern -.5em\rightharpoonup\,}
\def \cqfd {\hfill$\Box$}
\def \div{\mbox{\rm div}}
\def \Div{\mbox{\rm Div}}
\def \curl{\mbox{\rm curl}}
\def \al {\alpha}
\def \ga {\gamma}
\def \Ga {\Gamma}
\def \de {\delta}
\def \De {\Delta}
\def \ep {\varepsilon}
\def \om {\omega}
\def \Om {\Omega}
\def \la {\lambda}
\def \ph {\varphi}
\def \th {\theta}
\def \ka {\kappa}
\def \si {\sigma}
\def \Si {\Sigma}
\def \ZZ {\mathbb Z}
\def \RR {\mathbb R}
\def \D {\mathcal{D}}
\def \M {\mathcal{M}}
\def \bmu {\mbox{\boldmath $\mu$}}
\def \beq {\begin{equation}}
\def \eeq {\end{equation}}
\def \ba {\begin{array}}
\def \ea {\end{array}}
\def \bs {\bigskip}
\def \ms {\medskip}
\def \ss {\smallskip}
\def \ecart {\noalign{\medskip}}
\newtheorem{Thm}{Theorem}[section]
\newtheorem{Pro}[Thm]{Proposition}
\newtheorem{Lem}[Thm]{Lemma}
\newtheorem{Adef}[Thm]{Definition}
\newtheorem{Arem}[Thm]{Remark}
\newenvironment{Rem}{\begin{Arem}\rm}{\end{Arem}}
\newtheorem{Aexa}[Thm]{Example}
\newtheorem{Anot}[Thm]{Notation}
\def \refe #1.{(\ref{#1})}
\def \reff #1.{figure~\ref{#1}}
\def \refs #1.{Section~\ref{#1}}
\def \refss #1.{Subsection~\ref{#1}}
\def \refD #1.{Definition~\ref{#1}}
\def \refT #1.{Theorem~\ref{#1}}
\def \refL #1.{Lemma~\ref{#1}}
\def \refC #1.{Corollary~\ref{#1}}
\def \refP #1.{Proposition~\ref{#1}}
\def \refPt #1.{Properties~\ref{#1}}
\def \refR #1.{Remark~\ref{#1}}
\def \refE #1.{Example~\ref{#1}}
\def \refN #1.{Notation~\ref{#1}}
\def \Wp {W_{\sharp,\ep}^\la}
\def \wp {w_{\sharp,\ep}^\la}
\def \Qp {Q_{\sharp,\ep}^\la}
\def \qp {q_{\sharp,\ep}^\la}
\author{
\footnotesize
\begin{tabular}{cccc}
\normalsize Marc BRIANE && \normalsize Patrick G\'ERARD
\\
INSA de Rennes && Universit\'e Paris-Sud
\\
IRMAR, CNRS UMR 6625 && LMO, CNRS UMR 8628
\\
35708 Rennes Cedex 7 FRANCE && 91405 Orsay FRANCE
\\
&& Institut Universitaire de France
\\
mbriane@insa-rennes.fr && Patrick.Gerard@math.u-psud.fr
\end{tabular}
}
\title{A drift homogenization problem revisited}
\begin{document}
\maketitle
\begin{abstract}
This paper revisits a homogenization problem studied by L.~Tartar related to a tridimensional Stokes equation perturbed by a drift (connected to the Coriolis force). Here, a scalar equation and a two-dimensional Stokes equation with a $L^2$-bounded oscillating drift are considered. Under higher integrability conditions the Tartar approach based on the oscillations test functions method applies and leads to a limit equation with an extra zero-order term. When the drift is only assumed to be equi-integrable in $L^2$, the same limit behaviour is obtained. However, the lack of integrability makes difficult the direct use of the Tartar method. A new method in the context of homogenization theory is proposed. It is based on a parametrix of the Laplace operator which permits to write the solution of the equation as a solution of a fixed point problem, and to use truncated functions even in the vector-valued case. On the other hand, two counter-examples which induce different homogenized zero-order terms actually show the optimality of the equi-integrability assumption.
\end{abstract}
{\bf Keywords:} Homogenization - Second-order elliptic equations - Drift
\par\bs\noindent
{\bf Mathematics Subject Classification:}  35B27 - 76M50
\section{Introduction}
At the end of the Seventies L.~Tartar developed his method based on oscillating test functions to deal with the homogenization of PDE's. In the particular framework of hydrodynamics \cite{Tar1,Tar2} he studied the Stokes equation in a bounded domain $\Om$ of $\RR^3$, perturbed by an oscillating drift term, i.e.
\beq\label{Sto}
\left\{\ba{rl}
-\,\De u_\ep+\curl\left(v_\ep\right)\times u_\ep+\nabla p_\ep=f & \mbox{in }\Om
\\ \ecart
\div\left(u_\ep\right)=0 & \mbox{in }\Om
\\ \ecart
u_\ep=0 & \mbox{on }\Om,
\ea\right.
\eeq
where the oscillations are produced by the sequence of vector-valued functions $v_\ep$ which weakly converges to some $v$ in $L^3(\Om)^3$. L.~Tartar proved that the limit equation of \refe{Sto}. is the Brinkman \cite{Bri} type equation
\beq\label{hSto}
\left\{\ba{rll}
-\,\De u+\curl\left(v\right)\times u+\nabla p+{M}u=f & \mbox{in }\Om
\\ \ecart
\div\left(u\right)=0 & \mbox{in }\Om
\\ \ecart
u=0 & \mbox{on }\Om,
\ea\right.
\eeq
where ${M}$ is a positive definite symmetric matrix-valued function. More precisely, ${M}$ is defined by the convergences
\beq\label{MTi}
(Dw_\ep^\la)^T\,v_\ep\confai {M}\la\quad\mbox{weakly in }L^{3\over 2}(\Om)^3,\quad\mbox{for any }\la\in\RR^3,
\eeq
where $w_\ep^\la\in W^{1,3}(\Om)^3$ solves the Stokes equation \refe{Sto}. in which $u_\ep$ is replaced by $\la$. Then, the convergence \refe{MTi}. combined with the compactness of $u_\ep$ in $L^3(\Om)^3$, yields the zero-order term $Mu$ in \refe{hSto}.. In  \cite{Tar3} L.~Tartar revisited this problem using the H-measures tool. On the other hand, the appearance of such a strange zero-order term in homogenization was also obtained from finely perforated domains by D.~Cioranescu, F.~Murat \cite{CiMu} for the Laplace equation, and by  G.~Allaire~\cite{All} for the Stokes equation, with zero Dirichlet boundary condition on the holes.
\par
Since $\curl\left(v_\ep\right)\times u_\ep$ is orthogonal to $u_\ep$, the energy associated with \refe{Sto}. is reduced to
\beq\label{ener}
\int_\Om|Du_\ep|^2\,dx,
\eeq
and thus does not depend on the drift $v_\ep$. Starting from this remark our aim is to study two drift homogenization problems associated with the same energy \refe{ener}., and to specify the optimal integrability satisfied by the drift so that the Tartar approach holds. The first problem is scalar and the second problem is a two-dimensional equivalent of the Stokes problem \refe{Sto}.. However, we have not succeeded in obtaining an optimal result for the three-dimensional Stokes equation \refe{Sto}. since the best integrability assumption for $v_\ep$ is not clear.
\par\bs
In Section~\ref{s.sca}, we consider the following scalar equation in a bounded open set $\Om$ of $\RR^N$,
\beq\label{edrift}
\left\{\ba{rll}
-\,\De u_\ep+b_\ep\cdot\nabla u_\ep+{\rm div}\left(b_\ep\,u_\ep\right) & =f & \mbox{in }\Om
\\ \ecart
u_\ep & =0 & \mbox{on }\partial\Om,
\ea\right.
\eeq
where $b_\ep\in L^\infty(\Om)^N$ is bounded in $L^2(\Om)^N$. We obtain three different homogenization results:
\par
In Section~\ref{ss.T}, assuming that the divergence of the drift $b_\ep$ is bounded in $W^{-1,q}(\Om)$, with $q>N$, we prove (see Theorem~\ref{thmT1}) that the sequence $u_\ep$ weakly converges in $H^1_0(\Om)$ to the solution $u$ of the equation
\beq\label{hdrift}
\left\{\ba{rll}
-\,\De u+b\cdot\nabla u+{\rm div}\left(b\,u\right)+\mu\,u & =f & \mbox{in }\Om
\\ \ecart
u & =0 & \mbox{on }\partial\Om,
\ea\right.
\eeq
where $\mu$ is a nonnegative function. The proof follows the Tartar method using the oscillating test function
\beq\label{wei}
w_\ep:=\De^{-1}\bigl(\div\left(b_\ep\right)\bigr)\in H^1_0(\Om).
\eeq
\par
Then, in Section~\ref{ss.equi}, assuming only the equi-integrability of the sequence $\nabla w_\ep$ in $L^2(\Om)^N$ (this is actually a weaker assumption than the equi-integrability of the whole sequence $b_\ep$), we obtain (see Theorem~\ref{thmT2}) the limit problem \refe{hdrift}. with
\beq\label{mui}
|\nabla w_\ep-\nabla w|^2\confai\mu\;\;\mbox{weakly in }L^1(\Om)\quad\mbox{and}\quad\mu\,u^2\in L^1(\Om).
\eeq
It seems intricate to apply directly the Tartar method with the test function $w_\ep$, since we cannot control the terms $b_\ep\cdot\nabla u_\ep\,w_\ep$ and $b_\ep\cdot\nabla w_\ep\,u_\ep$. To this end, one should consider truncations of both $w_\ep$ and $\nabla w_\ep$. To overcome this difficulty we propose a new method, up to our knowledge, in the context of homogenization theory, based on a parametrix of the Laplace operator. It follows that $u_\ep$ reads as a solution of a fixed point problem, which allows us to estimate the sequence $\nabla w_\ep\cdot\nabla u_\ep$ only using a truncation of $\nabla w_\ep$. The equi-integrability of $\nabla w_\ep$ then gives the thesis.  
Also assuming that $b\in L^q(\Om)^N$, with $q>N$, (which ensures the uniqueness in \refe{hdrift}.) we prove the following corrector result
\beq\label{corresi}
u_\ep-\left(1+w_\ep-w\right)u\;\longrightarrow\;0\quad\mbox{strongly in }W^{1,q}_{\rm loc}(\Om),\quad\mbox{for any }q\in[1,N').
\eeq 
\par
Finally, in Section~\ref{s.cex}, we show the optimality of the equi-integrability condition thanks to a counter-example in the periodic framework (see Theorem~\ref{thm.cex}). Making a change of functions with $b_\ep=\nabla w_\ep$, equation \refe{edrift}. is shown to be equivalent to the following equation
\beq\label{ezot}
-\,\De v_\ep+\mu_\ep\,v_\ep=f_\ep,\quad\mbox{with}\quad\mu_\ep:=|\nabla w_\ep|^2,
\eeq
the solution of which has the same limit as $u_\ep$. G.~Dal Maso, A.~Garroni \cite{DMGa} proved that the class of equations of type \refe{ezot}. is stable under homogenization. Here, we do not use this general result, but we explicit an oscillating sequence $w_\ep$ so that the limit equation of \refe{edrift}., or equivalently \refe{ezot}., is
\beq\label{hcex}
-\,\De u+\ga\,u=f,
\eeq
with an explicit constant $\ga$ which turns out to be $<\mu$.
Therefore, the loss of equi-integrability for $\nabla w_\ep$ violates the result of Section~\ref{ss.equi}. Note that the vectorial character of the drift term in equation \refe{edrift}. makes difficult the derivation of a closure result similar to the one of \cite{DMGa} which is strongly based on a comparison principle.
\par\bs
In Section~\ref{s.Sto}, we consider the following two-dimensional equivalent of the perturbed Stokes problem \refe{Sto}.,
\beq\label{Sto2d}
\left\{\ba{rl}
-\,\De u_\ep+\curl\left(v_\ep\right) Ju_\ep+\nabla p_\ep=f & \mbox{in }\Om
\\ \ecart
\div\left(u_\ep\right)=0 & \mbox{in }\Om
\\ \ecart
u_\ep=0 & \mbox{on }\Om,
\ea\right.
\eeq
where $J$ is the rotation matrix of angle $90^\circ$, and $v_\ep\in L^\infty(\Om)^2$ is bounded in $L^2(\Om)^2$. We follow the same scheme as in the scalar case:
\par
In Section~\ref{ss.TS}, assuming that the sequence $v_\ep$ is bounded in $L^r(\Om)^2$ with $r>2$, we show (see Theorem~\ref{thmT2}) that the sequence $u_\ep$ weakly converges in $H^1_0(\Om)$ to the solution $u$ of the Brinkman equation
\beq\label{hSto2d}
\left\{\ba{rl}
-\,\De u+\curl\left(v\right) Ju+\nabla p+{M}u=f & \mbox{in }\Om
\\ \ecart
\div\left(u\right)=0 & \mbox{in }\Om
\\ \ecart
u=0 & \mbox{on }\Om,
\ea\right.
\eeq
where ${M}$ is a symmetric positive definite matrix-valued function defined by the convergence \refe{MTi}. in $L^{2r\over r+2}(\Om)^2$.
\par 
In Section~\ref{ss.equiS}, assuming only the equi-integrability of the sequence $v_\ep$ in $L^2(\Om)^2$, we prove (see Theorem~\ref{thm2}) owing to the Tartar method that the sequence $u_\ep$ weakly converges in $H^1_0(\Om)$ to the solution $u$ of the Brinkman equation \refe{hSto2d}. with similarly to \refe{mui}.,
\beq\label{MTi2d}
(Dw_\ep^\la)^T\,v_\ep\confai {M}\la\quad\mbox{weakly in }L^1(\Om)^2\quad\mbox{and}\quad {M}u\cdot u\in L^1(\Om).
\eeq
The proof is based on a double parametrix method carrying on both the velocity $u_\ep$ and the pressure $p_\ep$. However, the proof of the last estimate of \refe{MTi2d}. is more delicate than the one of~\refe{mui}., since we cannot use a comparison principle as in the scalar case. We need to introduce a test function similar to $w_\ep^\la$ but associated with a truncation of $v_\ep$. Moreover, if $\Om$ has a regular boundary, $v\in L^r(\Om)^2$ with $r>2$, and ${M}\in L^m(\Om)^{2\times 2}$ with $m>1$, we get the corrector result
\beq\label{corresiS}
u_\ep-u-W_\ep\,u\;\longrightarrow\;0\quad\mbox{strongly in }W^{1,1}(\Om)^2,\quad\mbox{where}\quad W_\ep\la:=w_\ep^\la,\mbox{ for }\la\in\RR^2.
\eeq
\par
Finally, in Section~\ref{ss.equiS}, we construct an oscillating sequence $v_\ep$ which is not equi-integrable in $L^2(\Om)^2$, which leads to the limit problem \refe{hSto2d}. involving a matrix $\Ga$ which is not symmetric and satisfies the strict inequality
\[
\Ga\la\cdot\la<{M}\la\cdot\la,\quad\mbox{for any }\la\neq 0,
\]
which is inconsistent with the Tartar approach. This shows the optimality of the equi-integrability condition as in the scalar case. It would be very interesting to find the closure of the family of problems \refe{Sto2d}. under the sole condition of $L^2$-boundedness of the sequences $v_\ep$. This problem is far from being evident due to the absence of comparison principle for such a vector-valued equation.    
\subsection*{Notations}
\begin{itemize}
\item The space dimension is $N\geq 2$, and $\dis 2^*:={2N\over N-2}$.
\item The conjugate exponent of $p\geq 1$ is denoted by $\dis p':={p\over p-1}$.
\item For $u:\RR^N\longrightarrow\RR^N$, $\dis Du:=\left({\partial u_{i}\over\partial x_j}\right)_{1\leq i,j\leq N}$.
\item For $\Si:\RR^N\longrightarrow\RR^{N\times N}$, $\dis\Div\left(\Si\right):=\left(\sum_{j=1}^N{\partial\Si_{ij}\over\partial x_j}\right)_{1\leq i\leq N}$. 
\item $H^1_\sharp(Y)$, with $Y:=(0,1)^N$, denotes the space of the $Y$-periodic functions on $\RR^N$ which belong to $H^1_{\rm loc}(\RR^N)$.
\end{itemize}
\section{A scalar equation with a drift term}\label{s.sca}
Along this section $\Om$ is a bounded regular open set of $\RR^N$, with $N\geq 2$, and $f$ is a distribution in $H^{-1}(\Om)$.
\subsection{The classical case}\label{ss.T}
Let $q\in(N,\infty)$. Consider a sequence $b_\ep$ in $L^\infty(\Om)^N$ such that 
\beq\label{be}
b_\ep\confai b\quad\mbox{weakly in }L^2(\Om)^N\qquad\mbox{and}\qquad{\rm div}\left(b_\ep\right)\mbox{ is bounded in }W^{-1,q}(\Om).
\eeq
Let $w_\ep\in W^{1,q}_0(\Om)$ be the solution of the equation (see, e.g., \cite{Gey} Theorem~2.1)
\beq\label{we}
\De w_\ep={\rm div}\left(b_\ep\right)\quad\mbox{in }\D'(\Om).
\eeq
Up to a subsequence $w_\ep$ weakly converges in $W^{1,q}_0(\Om)$ to the function $w$ solution of
\beq\label{w}
\De w={\rm div}\left(b\right)\quad\mbox{in }\D'(\Om).
\eeq
We have the following result:
\begin{Thm}\label{thmT1}
The solution $u_\ep\in H^1_0(\Om)$ of the equation
\beq\label{ue}
-\,\De u_\ep+b_\ep\cdot\nabla u_\ep+{\rm div}\left(b_\ep\,u_\ep\right)=f\quad\mbox{in }\D'(\Om),
\eeq
weakly converges in $H^1_0(\Om)$, up to a subsequence, to a solution $u\in H^1_0(\Om)$ of the equation
\beq\label{eq1u}
-\,\De u+b\cdot\nabla u+{\rm div}\left(b\,u\right)+\mu\,u=f\quad\mbox{in }\D'(\Om),
\eeq
where $\mu$ is the function defined by the convergence
\beq\label{mu}
|\nabla w_\ep-\nabla w|^2\confai\mu\quad\mbox{weakly in }L^{q\over 2}(\Om).
\eeq
\end{Thm}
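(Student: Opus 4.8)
\medskip
\noindent\emph{Proof sketch.}\ The plan is to run Tartar's oscillating test function method with $w_\ep$; the only genuine difficulty is the weak convergence of the several products that occur because $b_\ep$ is merely bounded in $L^2(\Om)^N$. First, \refe{ue}. is well posed by Lax--Milgram: the associated bilinear form is continuous on $H^1_0(\Om)$ (here $b_\ep\in L^\infty(\Om)^N$ is used) and coercive because, taking $u_\ep$ itself as test function, the two drift contributions $\int_\Om(b_\ep\cdot\nabla u_\ep)\,u_\ep$ and $\int_\Om(b_\ep u_\ep)\cdot\nabla u_\ep$ coincide and cancel --- this is exactly the remark \refe{ener}.. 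Hence $\|\nabla u_\ep\|_{L^2(\Om)^N}^2=\langle f,u_\ep\rangle$, so $u_\ep$ is bounded in $H^1_0(\Om)$, and up to a subsequence $u_\ep\confai u$ in $H^1_0(\Om)$ with $u_\ep\to u$ in $L^p(\Om)$ for every $p<2^*$; since $q>N\geq2$ the space $L^{q\over2}(\Om)$ is reflexive, so a further subsequence yields $|\nabla w_\ep-\nabla w|^2\confai\mu\geq0$ in $L^{q\over2}(\Om)$. I set $\widetilde w_\ep:=w_\ep-w$, which solves $\De\widetilde w_\ep=\div(b_\ep-b)$ and, by the compact embedding $W^{1,q}_0(\Om)\hookrightarrow C(\overline\Om)$ (where $q>N$ is used), converges to $0$ \emph{uniformly} on $\overline\Om$; and $z_\ep:=b_\ep-\nabla w_\ep$, which is divergence free, bounded in $L^2(\Om)^N$, with $z_\ep\confai z:=b-\nabla w$.

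Testing \refe{ue}. against $\ph\in C^\infty_c(\Om)$, the terms $\int_\Om\nabla u_\ep\cdot\nabla\ph$ and $\int_\Om(b_\ep u_\ep)\cdot\nabla\ph$ (where $u_\ep\nabla\ph\to u\nabla\ph$ strongly in $L^2(\Om)^N$ against $b_\ep\confai b$) pass to the limit at once, so everything reduces to $\lim_\ep\int_\Om\ph\,b_\ep\cdot\nabla u_\ep$. Here I would use $\ph\widetilde w_\ep\in H^1_0(\Om)$ as test function in \refe{ue}. and $\ph u_\ep\in H^1_0(\Om)$ as test function in $\De\widetilde w_\ep=\div(b_\ep-b)$, and subtract the two identities to delete the common term $\int_\Om\ph\,\nabla u_\ep\cdot\nabla\widetilde w_\ep$; the result is
\[
\int_\Om\ph\,(b_\ep\cdot\nabla u_\ep)(1+\widetilde w_\ep)=\int_\Om u_\ep\,\ph\,b_\ep\cdot\nabla\widetilde w_\ep+\int_\Om\ph\,b\cdot\nabla u_\ep+\int_\Om(b_\ep u_\ep)\cdot\widetilde w_\ep\nabla\ph+\rho_\ep,
\]
where $\rho_\ep$ collects $\langle f,\ph\widetilde w_\ep\rangle$, $\int_\Om\widetilde w_\ep\nabla u_\ep\cdot\nabla\ph$, $\int_\Om u_\ep\nabla\widetilde w_\ep\cdot\nabla\ph$ and $\int_\Om u_\ep(b-b_\ep)\cdot\nabla\ph$, each of which tends to $0$: the first since $\ph\widetilde w_\ep\confai0$ in $H^1_0(\Om)$, the second and fourth since $\widetilde w_\ep\nabla\ph\to0$ and $u_\ep\nabla\ph\to u\nabla\ph$ strongly in $L^2(\Om)^N$ against bounded sequences, the third since $u_\ep\nabla\ph\to u\nabla\ph$ strongly in $L^{q'}(\Om)^N$ against $\nabla\widetilde w_\ep\confai0$ in $L^q(\Om)^N$. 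Similarly $\int_\Om(b_\ep u_\ep)\cdot\widetilde w_\ep\nabla\ph\to0$ and, on the left-hand side, $\int_\Om\ph\,\widetilde w_\ep\,(b_\ep\cdot\nabla u_\ep)\to0$, because $b_\ep u_\ep$ and $b_\ep\cdot\nabla u_\ep$ are bounded in $L^1(\Om)$ while $\widetilde w_\ep\to0$ uniformly; and $\int_\Om\ph\,b\cdot\nabla u_\ep\to\int_\Om\ph\,b\cdot\nabla u$.

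The remaining, decisive term is $\int_\Om u_\ep\,\ph\,b_\ep\cdot\nabla\widetilde w_\ep$, a product of several merely weakly convergent sequences --- this is the heart of the proof, and the main obstacle. I would split $b_\ep=\nabla w+\nabla\widetilde w_\ep+z_\ep$. The $\nabla w$--part tends to $0$ since $u_\ep\,\ph\,\nabla w\to u\,\ph\,\nabla w$ strongly in $L^{q'}(\Om)^N$ --- this uses $q>N$, which makes ${2\over q}<1-{1\over2^*}$ --- against $\nabla\widetilde w_\ep\confai0$ in $L^q(\Om)^N$. For the $z_\ep$--part, since $\widetilde w_\ep u_\ep\ph\in H^1_0(\Om)$ and $\div z_\ep=0$ one has $\int_\Om z_\ep\cdot\nabla(\widetilde w_\ep u_\ep\ph)=0$, whence $\int_\Om u_\ep\,\ph\,z_\ep\cdot\nabla\widetilde w_\ep=-\int_\Om\widetilde w_\ep\,z_\ep\cdot\nabla(u_\ep\ph)\to0$, as $\widetilde w_\ep\nabla(u_\ep\ph)\to0$ strongly in $L^2(\Om)^N$ against $z_\ep\confai z$. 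Finally the $\nabla\widetilde w_\ep$--part is $\int_\Om u_\ep\,\ph\,|\nabla\widetilde w_\ep|^2\to\int_\Om\mu\,u\,\ph$, because $u_\ep\ph\to u\ph$ strongly in $L^{(q/2)'}(\Om)$ --- here $q>N$ ensures $(q/2)'<2^*$ --- against $|\nabla\widetilde w_\ep|^2\confai\mu$ in $L^{q\over2}(\Om)$. Altogether $\int_\Om\ph\,b_\ep\cdot\nabla u_\ep\to\int_\Om\ph\,b\cdot\nabla u+\int_\Om\mu\,u\,\ph$, so passing to the limit in \refe{ue}. gives, for every $\ph\in C^\infty_c(\Om)$,
\[
\int_\Om\nabla u\cdot\nabla\ph+\int_\Om(b\cdot\nabla u)\,\ph+\int_\Om\mu\,u\,\ph-\int_\Om(b\,u)\cdot\nabla\ph=\langle f,\ph\rangle,
\]
that is \refe{eq1u}. in $\D'(\Om)$ (with $\mu u\in L^1(\Om)$ by H\"older, since $\mu\in L^{q\over2}(\Om)$ and $u\in L^{2^*}(\Om)$), together with $u\in H^1_0(\Om)$ and $\mu\geq0$. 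The points left aside are routine: the admissibility of the various test functions (all the products used lie in $H^1_0(\Om)$, using in particular that $b_\ep u_\ep$ is bounded in $L^{N'}(\Om)^N$) and the finiteness of the integrals.
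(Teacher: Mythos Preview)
Your argument is correct and follows the same overall Tartar oscillating-test-function strategy as the paper, but you organize the key computation differently. The paper tests \refe{ue}. with $\ph\,w_\ep$ and \refe{we}. with $\ph\,u_\ep$, introduces the auxiliary limit $\si$ of $b_\ep\cdot\nabla w_\ep$ in $L^{2q/(q+2)}(\Om)$, and then proves separately (by testing the equations for $w_\ep$ and $w$ against $\ph\,w_\ep$ and $\ph\,w$) that $\mu=\si-b\cdot\nabla w$, concluding $\nu=\mu\,u$. You instead work directly with $\widetilde w_\ep=w_\ep-w$ and split $b_\ep=\nabla w+\nabla\widetilde w_\ep+z_\ep$ via the Hodge decomposition; the $\nabla\widetilde w_\ep$-part produces $\mu$ immediately, while the divergence-free part $z_\ep$ is dispatched by the integration-by-parts trick $\int z_\ep\cdot\nabla(\widetilde w_\ep u_\ep\ph)=0$, which transfers the bad factor onto $\widetilde w_\ep\to 0$ in $C(\bar\Om)$. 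This is a small div--curl type argument that the paper does \emph{not} use in this proof (it appears later, in the equi-integrable setting of \refT{thm1}.). Your route is slightly more direct and avoids the intermediate object $\si$; the paper's route is the more classical Tartar bookkeeping and makes the structure $\nu=(\si-b\cdot\nabla w)\,u$, $\mu=\si-b\cdot\nabla w$ explicit. Both rely identically on the compact embedding $W^{1,q}_0(\Om)\hookrightarrow C(\bar\Om)$ for $q>N$ and on $q/(q-2)<2^*$ to pair $|\nabla\widetilde w_\ep|^2\confai\mu$ in $L^{q/2}$ against the strong convergence of $u_\ep$.
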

\begin{Rem}
The uniqueness for equation \refe{ue}. is not evident under the sole assumption $b\in L^2(\Om)^2$. Assuming a stronger integrability of $b$ we will obtain in Theorem~\ref{thm1} the uniqueness for the limit equation.
\end{Rem}
\begin{proof}
The proof is based on the Tartar method of the oscillating test functions (see Appendix of~\cite{San}, and \cite{Tar4}). The function $w_\ep$ of \refe{we}. will play the role of the oscillating test function.
The variational formulation of \refe{ue}. is
\beq\label{vfue}
\int_\Om\nabla u_\ep\cdot\nabla\ph\,dx+\int_\Om b_\ep\cdot\nabla u_\ep\,\ph\,dx-\int_\Om b_\ep\cdot\nabla\ph\,u_\ep\,dx
=\langle f,\ph\rangle_{H^{-1}(\Om),H^1_0(\Om)}\,,\quad\forall\,\ph\in H^1_0(\Om).
\eeq
Then, by the Lax-Milgram theorem there exists a unique solution $u_\ep$ of \refe{vfue}. in $H^1_0(\Om)$.
In particular, for $v\in W^{1,\infty}(\Om)$, putting $\ph=v\,u_\ep$ as test function in \refe{vfue}. we obtain the identity
\beq\label{vue}
\int_\Om|\nabla u_\ep|^2\,v\,dx+\int_\Om\nabla u_\ep\cdot\nabla v\,u_\ep\,dx-\int_\Om b_\ep\cdot\nabla v\,u_\ep^2\,dx
=\langle f,v\,u_\ep\rangle_{H^{-1}(\Om),H^1_0(\Om)}\,,
\eeq
which will be used several times. So, choosing $v=1$ in \refe{vue}. the term with $b_\ep$ cancel so that we easily deduce that $u_\ep$ is bounded in $H^1_0(\Om)$ and weakly converges, up to a subsequence, to a function $u$ in $H^1_0(\Om)$.
Therefore, it follows from \refe{vfue}.  the limit variational formulation
\beq\label{vfe1}
\int_\Om\nabla u\cdot\nabla\ph\,dx+\int_\Om b\cdot\nabla u\,\ph\,dx+\int_\Om\ph\,d\nu-\int_\Om b\cdot\nabla\ph\,u\,dx
=\langle f,\ph\rangle_{H^{-1}(\Om),H^1_0(\Om)}\,,
\eeq
which holds for any $\ph\in W^{1,q}_0(\Om)$ (due to the embedding of $W^{1,q}_0(\Om)$ into $C(\bar\Om)$ for $q>N$), where the measure $\nu$ is defined by the convergence
\beq\label{nu}
b_\ep\cdot\nabla u_\ep\confai b\cdot\nabla u+\nu\quad\mbox{weakly-$*$ in }\M(\Om).
\eeq
The limit equation associated with \refe{vfe1}. is
\beq\label{e1}
-\,\De u+b\cdot\nabla u+\nu+{\rm div}\left(b\,u\right)=f\quad\mbox{in }\D'(\Om).
\eeq
\par
Now, let us determine the measure $\nu$ of \refe{nu}.. Let $\ph\in C^\infty_c(\Om)$. Putting $\ph\,w_\ep$ as test function in \refe{vfue}. and $\ph\,u_\ep$ in \refe{we}., and taking the difference of the two equalities we get
\beq\label{e2}
\ba{l}
\dis \int_\Om\nabla u_\ep\cdot\nabla\ph\,w_\ep\,dx-\int_\Om\nabla w_\ep\cdot\nabla\ph\,u_\ep\,dx
\\ \ecart
\dis =\langle f,\ph\,w_\ep\rangle_{H^{-1}(\Om),H^1_0(\Om)}-\int_\Om b_\ep\cdot\nabla u_\ep\,\ph\,w_\ep\,dx+\int_\Om b_\ep\cdot\nabla w_\ep\,\ph\,u_\ep\,dx
+\int_\Om b_\ep\cdot\nabla\ph\,u_\ep\,w_\ep\,dx
\\ \ecart
\dis -\int_\Om b_\ep\cdot\nabla u_\ep\,\ph\,dx-\int_\Om b_\ep\cdot\nabla\ph\,u_\ep\,dx.
\ea
\eeq
Passing to the limit in \refe{e2}. by using the strong convergence of $u_\ep$ in $L^p(\Om)$, for $p<2^*$, and the uniform convergence of $w_\ep$ in $C(\bar\Om)$ ($q>N$), we obtain
\beq\label{e3}
\ba{l}
\dis \int_\Om\nabla u\cdot\nabla\ph\,w\,dx-\int_\Om\nabla w\cdot\nabla\ph\,u\,dx
\\ \ecart
\dis =\langle f,\ph\,w\rangle_{H^{-1}(\Om),H^1_0(\Om)}-\int_\Om b\cdot\nabla u\,\ph\,w\,dx-\int_\Om\ph\,w\,d\nu+\int_\Om\si\,\ph\,u\,dx+\int_\Om b\cdot\nabla\ph\,u\,w\,dx
\\ \ecart
\dis -\int_\Om b\cdot\nabla u\,\ph\,dx-\int_\Om\ph\,d\nu-\int_\Om b\cdot\nabla\ph\,u\,dx,
\ea
\eeq
where the measure $\nu$ is defined by \refe{nu}. and the function $\si$ is defined, up to a subsequence, by the convergence
\beq\label{si}
b_\ep\cdot\nabla w_\ep\confai\si\quad\mbox{weakly in }L^{2q\over q+2}(\Om).
\eeq
On the other hand, putting $\ph\,w\in W^{1,q}_0(\Om)$ in \refe{vfe1}. and $\ph\,u\in H^1_0(\Om)$ in \refe{w}. we have
\beq\label{unu}
\ba{ll}
\dis \int_\Om\nabla u\cdot\nabla(\ph\,w)\,dx & \dis =\langle f,\ph\,w\rangle_{H^{-1}(\Om),H^1_0(\Om)}-\int_\Om b\cdot\nabla u\,\ph\,w\,dx-\int_\Om\ph\,w\,d\nu
\\ \ecart
& \dis +\int_\Om b\cdot\nabla w\,\ph\,u\,dx +\int_\Om b\cdot\nabla\ph\,u\,w\,dx,
\ea
\eeq
\beq\label{wsi}
\int_\Om\nabla w\cdot\nabla(\ph\,u)\,dx=\int_\Om b\cdot\nabla u\,\ph\,dx+\int_\Om b\cdot\nabla\ph\,u\,dx.
\eeq
Equating the difference between \refe{unu}. and \refe{wsi}. to the right-hand side of \refe{e3}., it follows that
\beq\label{e4}
\int_\Om\si\,\ph\,u\,dx-\int_\Om b\cdot\nabla w\,\ph\,u\,dx-\int_\Om\ph\,d\nu=0,\quad\mbox{for any }\ph\in C^\infty_c(\Om),
\eeq
which implies that
\beq\label{nusi}
\nu=\si\,u-b\cdot\nabla w\,u\quad\mbox{in }\D'(\Om).
\eeq
It thus remains to determine the limit equation \refe{eq1u}.. To this end, we pass to the limit by using $\ph\,w_\ep$ as test function in \refe{we}. and the definition \refe{mu}. of $\mu$, and we put $\ph\,w$ in \refe{w}., which yields
\beq\label{e5}
\int_\Om\left(\mu+|\nabla w|^2\right)\ph\,dx+\int_\Om\nabla w\cdot\nabla\ph\,w\,dx=\int_\Om\si\,\ph\,dx+\int_\Om b\cdot\nabla\ph\,w\,dx,
\eeq
\beq\label{e6}
\int_\Om|\nabla w|^2\,\ph\,dx+\int_\Om\nabla w\cdot\nabla\ph\,w\,dx=\int_\Om b\cdot\nabla w\,\ph\,dx+\int_\Om b\cdot\nabla\ph\,w\,dx.
\eeq
Equating \refe{e5}. and \refe{e6}., we deduce that
\beq\label{musi}
\mu=\si-b\cdot\nabla w\quad\mbox{in }\D'(\Om),
\eeq
which combined with \refe{nusi}. implies that
\beq\label{nu=}
\nu=\mu\,u\quad\mbox{in }\D'(\Om).
\eeq
Finally, the limit equation \refe{e1}. and the relation \refe{nu=}. give the desired homogenized equation~\refe{eq1u}..
\end{proof}
\begin{Rem}
It can be shown that
\beq\label{muH}
\mu(x)=\int_{S^{N-1}}\mbox{\bmu}\left(x,d\xi\right)\xi\cdot\xi\,,
\eeq
where {\bmu} denotes the matrix-valued $H$-measure (or micro-local defect measure) of the sequence $b_\ep$ (see \cite{Tar3} and \cite{Ger}), and $S^{N-1}$ the unit sphere of $\RR^N$.
\end{Rem}
Assumption \refe{be}. is actually not sharp. In the next section we replace it by the boundedness of $b_\ep$ and the equi-integrability of $\nabla w_\ep$ in $L^2(\Om)^2$. 
\subsection{The case under an equi-integrability assumption}\label{ss.equi}
\noindent
In this section $\Om$ is a bounded open set of $\RR^N$.
Consider a sequence $b_\ep$ in $L^\infty(\Om)^N$ the Hodge decomposition of which is
\beq\label{Hbe}
b_\ep=\nabla w_\ep+\xi_\ep,\quad\mbox{with}\quad w_\ep\in H^1_0(\Om),\ \xi_\ep\in L^2(\Om)^N\mbox{ and }\div\left(\xi_\ep\right)=0,
\eeq
such that
\beq\label{beb}
b_\ep\confai b\quad\mbox{weakly in }L^2(\Om)^N.
\eeq
Note that for a fixed $\ep>0$, $w_\ep\in W^{1,p}(\Om)$ and $\xi_\ep\in L^p(\Om)^N$ for any $p\in[2,\infty)$. But the essential point is the asymptotic behaviour of the sequences $b_\ep$, $\nabla w_\ep$, $\xi_\ep$.
Our main assumption is the equi-integrability of the sequence $\nabla w_\ep$ in $L^2(\Om)^N$.  By virtue of the Vitali-Saks theorem this is equivalent to the following convergence, up to an extraction of a subsequence,
\beq\label{equiDwe}
|\nabla w_\ep-\nabla w|^2\confai\mu\quad\mbox{weakly in }L^1(\Om),
\eeq
(Compare to \refe{mu}. with $q>N$). 
\par
We have the following result:
\begin{Thm}\label{thm1}
\hfill
\par\ss\noindent
$i)$ Under the equi-integrability assumption \refe{equiDwe}. the solution $u_\ep$ of \refe{ue}. weakly converges in $H^1_0(\Om)$ to a solution $u$ of the equation
\beq\label{eq2u}
-\,\De u+b\cdot\nabla u+{\rm div}\left(b\,u\right)+\mu\,u=f\quad\mbox{in }\D'(\Om),
\eeq
with 
\beq\label{muu}
\int_\Om\mu\,u^2\,dx\leq\langle f,u\rangle_{H^{-1}(\Om),H^1_0(\Om)}-\int_\Om|\nabla u|^2\,dx.
\eeq
\par\noindent
$ii)$ Also assume that $b\in L^q(\Om)^N$, where $q>2$ if $N=2$ and $q=N$ if $N>2$. Then, we have
\beq\label{muu=}
\int_\Om|\nabla u|^2\,dx+\int_\Om\mu\,u^2\,dx=\langle f,u\rangle_{H^{-1}(\Om),H^1_0(\Om)}.
\eeq
and there exists a unique solution $u\in H^1_0(\Om)$ of equation \refe{eq2u}., with $\mu\,u^2\in L^1(\Om)$.
\\
Moreover, for any $p\in[1,2)$ if $N=2$ and $p=N'$ if $N>2$, we have the corrector result
\beq\label{corDue}
\nabla u_\ep-\nabla u-\left(\nabla w_\ep-\nabla w\right)u\;\longrightarrow\;0\quad\mbox{strongly in }L^p_{\rm loc}(\Om)^N,
\eeq
and for any $r\in[1,p)$, 
\beq\label{corue}
u_\ep-\left(1+w_\ep-w\right)u\;\longrightarrow\;0\quad\mbox{strongly in }W^{1,r}_{\rm loc}(\Om).
\eeq
\end{Thm}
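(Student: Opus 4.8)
The plan is to mimic the Tartar argument of Theorem~\ref{thmT1}, replacing every step where the $W^{1,q}$-integrability of $w_\ep$ was essential ($q>N$, so $w_\ep\in C(\bar\Om)$) by an argument using only the equi-integrability \refe{equiDwe}., and as announced in the introduction, to recast \refe{ue}. as a fixed-point problem via a parametrix of $\De$ so that the critical product $\nabla w_\ep\cdot\nabla u_\ep$ is controlled through a truncation of $\nabla w_\ep$ alone. First I would record that choosing $\ph=u_\ep$ in \refe{vfue}. kills the drift terms and yields the uniform bound $\|\nabla u_\ep\|_{L^2}\le C\|f\|_{H^{-1}}$; hence (up to a subsequence) $u_\ep\confai u$ in $H^1_0(\Om)$ and $u_\ep\to u$ strongly in $L^p(\Om)$ for every $p<2^*$. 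Passing to the limit in \refe{vfue}. then gives, exactly as in \refe{vfe1}.–\refe{e1}., a limit equation $-\De u+b\cdot\nabla u+\nu+\Div(b\,u)=f$ with $\nu$ the weak-$*$ limit of $b_\ep\cdot\nabla u_\ep-b\cdot\nabla u$ in $\M(\Om)$ (one must first check $b_\ep\cdot\nabla u_\ep$ is bounded in $L^1$, which follows from the $L^2$ bounds). So part $i)$ reduces to proving $\nu=\mu\,u$ in $\D'(\Om)$.

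To identify $\nu$ without the continuity of $w_\ep$, I would use the parametrix. Fix a fundamental solution $E$ of $\De$ on $\RR^N$ and write, on a fixed open $\om\Subset\Om$, $u_\ep=E*(\chi g_\ep)+h_\ep$ where $g_\ep:=\De u_\ep = b_\ep\cdot\nabla u_\ep+\Div(b_\ep u_\ep)-f$ and $\chi$ a cutoff equal to $1$ near $\om$, with $h_\ep$ harmonic (hence smooth and, by the $H^1$ bound, bounded in $H^1$) on a neighbourhood of $\om$. Better, following the stated strategy: split $b_\ep=\nabla w_\ep+\xi_\ep$, so $\Div(b_\ep u_\ep)=\nabla w_\ep\cdot\nabla u_\ep+u_\ep\De w_\ep+\Div(\xi_\ep u_\ep)$ and $u_\ep\De w_\ep = u_\ep\,\Div(b_\ep)$ cancels against part of the first term; the upshot is that $u_\ep$ solves a fixed-point equation of the form $u_\ep = \Phi(b_\ep,u_\ep)$ in which $u_\ep$ enters the dangerous terms only through $\nabla w_\ep\cdot\nabla(\varphi u_\ep)$ for test $\varphi$. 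Now decompose $\nabla w_\ep=\nabla w_\ep^{(k)}+r_\ep^{(k)}$ where $\nabla w_\ep^{(k)}$ is the truncation at level $k$: by equi-integrability, $\sup_\ep\|r_\ep^{(k)}\|_{L^2}\to0$ as $k\to\infty$ uniformly in $\ep$. The truncated piece $\nabla w_\ep^{(k)}$ is bounded in $L^\infty$, hence $w_\ep^{(k)}$ is bounded in $W^{1,\infty}$ and one may run the test-function computation \refe{e2}.–\refe{e4}. verbatim with $w_\ep^{(k)}$ in place of $w_\ep$; letting $\ep\to0$ at fixed $k$ and then $k\to\infty$ (using the uniform smallness of $r_\ep^{(k)}$ to kill the remainder terms and the identification $|\nabla w_\ep^{(k)}-\nabla w^{(k)}|^2\confai\mu^{(k)}$ with $\mu^{(k)}\uparrow\mu$) yields $\nu=\mu\,u$ as a measure, and in particular $\mu\,u^2$ is a finite measure on every $\om\Subset\Om$.

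For the energy inequality \refe{muu}., I would test \refe{vfe1}. — now known to read $-\De u+b\cdot\nabla u+\mu\,u+\Div(b\,u)=f$ — against $u$ itself: the two drift contributions $\int b\cdot\nabla u\,u$ and $-\int b\cdot\nabla u\,u$ formally cancel, leaving $\int|\nabla u|^2+\int\mu\,u^2=\langle f,u\rangle$, but since a priori we only know $\mu\,u^2\in L^1_{\rm loc}$, I would obtain \refe{muu}. by truncating $u$ (replace $u$ by $T_n(u)$, pass to the limit using Fatou on the $\mu\,u^2$ term) to get the inequality. Part $ii)$ then upgrades this: with $b\in L^q$, $q>N$ (or $q>2$, $N=2$), the products $b\cdot\nabla u\,u$, $b\cdot\nabla\varphi\,u$ are genuinely integrable, the bilinear form in \refe{vfue}. (limit version) is coercive on $H^1_0$, so Lax–Milgram gives a \emph{unique} $u$ with $\mu\,u^2\in L^1(\Om)$, and testing against $u$ is now legitimate, giving equality \refe{muu=}.. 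Finally the corrector \refe{corDue}. comes from expanding $\int_\om\varphi^2|\nabla u_\ep-\nabla u-(\nabla w_\ep-\nabla w)u|^2$, using \refe{vue}. with $v=\varphi^2$ and its limit, the strong $L^p$ convergence of $u_\ep$, and \refe{equiDwe}.; this forces the $L^2_{\rm loc}$-convergence to $0$ when $N=2$ and, after a Hölder/interpolation step against the $L^{2^*}$-bound on $\nabla u_\ep-\nabla u$, the $L^{N'}_{\rm loc}$-convergence for $N>2$, whence \refe{corue}. by Poincaré–Sobolev on $\om$. The main obstacle is the identification $\nu=\mu\,u$ under the sole equi-integrability: making the truncation–parametrix scheme produce a \emph{clean} cancellation (the analogue of \refe{e4}.–\refe{nu=}.) while rigorously controlling the remainder $r_\ep^{(k)}$ in the quadratic term $\nabla w_\ep\cdot\nabla(\varphi u_\ep)$ is the delicate point, because $\nabla u_\ep$ is only $L^2$ and one cannot afford to pair it with anything worse than $L^2$; this is precisely where the fixed-point reformulation — which isolates $\nabla w_\ep\cdot\nabla u_\ep$ and lets one truncate only $\nabla w_\ep$ — does the work.
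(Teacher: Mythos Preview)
Your overall strategy is right --- parametrix plus truncation of $\nabla w_\ep$ --- but the way you combine them has a genuine gap. You write that the truncation ``$\nabla w_\ep^{(k)}$ is bounded in $L^\infty$, hence $w_\ep^{(k)}$ is bounded in $W^{1,\infty}$ and one may run the test-function computation \refe{e2}.--\refe{e4}. verbatim with $w_\ep^{(k)}$ in place of $w_\ep$''. But the truncation $\eta_\ep^k:=\nabla w_\ep\,1_{\{|\nabla w_\ep|<k\}}$ is \emph{not a gradient}: there is no function $w_\ep^{(k)}$ with $\nabla w_\ep^{(k)}=\eta_\ep^k$, so you cannot plug it as an oscillating test function into the Tartar computation. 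This is precisely the obstacle that makes the parametrix indispensable rather than a preamble. What the paper actually does: the parametrix identity \refe{DP}. together with Calder\'on--Zygmund regularity \refe{regP}. yields the \emph{strong} $L^p_{\rm loc}$ expansion (for $p>1$ close to $1$)
\[
\nabla u_\ep-u\,(\nabla w_\ep-\nabla w)=\nabla P\big[\nu+b\cdot\nabla u+\div(b\,u)-f\big]+\nabla Ku+o_{L^p_{\rm loc}(\Om)^N}(1),
\]
and one then takes the scalar product with $\eta_\ep^k\in L^\infty$, which needs no gradient structure since it acts only as a bounded multiplier. This gives $\si^k=\mu^k u+\eta^k\cdot\nabla u$ (via the same parametrix identity applied to the limit equation), and equi-integrability sends $k\to\infty$ to yield $\nu=\mu\,u$. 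Your fixed-point sketch is too vague to reach this expansion, and you then revert to the untenable substitution.

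A second gap: your derivation of \refe{muu}. by testing the limit equation with $T_n(u)$ leaves the drift remainder $\int_\Om b\cdot\nabla u\,(T_n(u)-u\,T_n'(u))\,dx$, which is not controlled when $b$ is only in $L^2$ (in general $b\cdot\nabla u\,u\notin L^1(\Om)$). The paper instead expands $\int_\Om|\nabla u_\ep-\nabla u-(\nabla w_\ep-\nabla w)\,t\,v|^2\,dx$ for $v\in L^\infty(\Om)$ and $t\in\RR$, using only the already-established convergences; setting $v=T_k(u)$ and applying Beppo--Levi then gives \refe{muu}. without ever testing the limit equation. The same truncation-first device is needed for the corrector: your direct expansion with $u$ would require a weak limit for $|\nabla w_\ep-\nabla w|^2 u^2$, which is not available since $u\notin L^\infty(\Om)$; the paper first obtains an $L^2(\Om)$ estimate with $T_k(u)$ in place of $u$ and only afterwards passes to $L^p_{\rm loc}$ with $u$ via H\"older.
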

\begin{Rem}
No equi-integrability is required for the divergence free sequence $\xi_\ep$. Actually, we can prove that the equi-integrability of the sequence $b_\ep$ in $L^2(\Om)^N$ implies the equi-integrability of its two components $\nabla w_\ep$, $\xi_\ep$ in $L^2_{\rmÊloc}(\Om)^N$. Therefore, condition \refe{equiDwe}. is really weaker than the equi-integrability of $b_\ep$.
\par
Moreover, the equi-integrability of $\nabla w_\ep$ in $L^2(\Om)^N$ is essential for deriving the limit equation with the zero-order term $\mu\,u$. When this condition is not satisfied we can obtain a similar limit equation but with a different zero-order term (see Section~\ref{s.cex}).
\end{Rem}
\noindent
{\bf Proof of Theorem~\ref{thm1}.}
The limit $u$ of $u_\ep$ in $H^1_0(\Om)$ solves the equation \refe{e1}. where $\nu$ is defined by
\beq\label{nu1}
b_\ep\cdot\nabla u_\ep-b\cdot\nabla u\confai\nu\quad\mbox{weakly-$*$ in }\M(\Om),
\eeq
By the Murat, Tartar div-curl lemma \cite{Mur} we have
\[
b_\ep\cdot\nabla u_\ep=\left(\xi_\ep+\nabla w\right)\cdot\nabla u_\ep+\left(\nabla w_\ep-\nabla w\right)\cdot\nabla u_\ep
\confai b\cdot\nabla u+\nu\quad\mbox{in }\D'(\Om).
\]
This combined with the equi-integrability of $\nabla w_\ep$ implies that $\nu$ is also given by the convergence
\beq\label{nu2}
\left(\nabla w_\ep-\nabla w\right)\cdot\nabla u_\ep\confai\nu\quad\mbox{weakly in }L^1(\Om).
\eeq
\par
The proof of Theorem~\ref{thm1}. is based on a parametrix method which allows us to express $u_\ep$ as a solution of a fixed point problem. As a consequence, we obtain a strong estimate of $\nabla u_\ep$ in $L^p_{\rm loc}(\Om)$ for some $p>1$ close to $1$. However, this estimate cannot provide directly the desired limit $\nu$ of \refe{nu2}. since $p<2$. To overcome this difficulty we consider a truncation $\eta_\ep^k$ of $\nabla w_\ep$ which is bounded by $k>0$. Then, we can pass to the limit as $\ep$ tends to zero in the product $\eta_\ep^k\cdot\nabla u_\ep$ for a fixed $k$. Hence, thanks to the equi-integrability of $\nabla w_\ep$ we deduce the limit $\nu$ as $k$ tends to infinity.
\par
The proof is divided into four steps. In the first step we present the parametrix method which leads to a $L^p$-strong estimate of $\nabla u_\ep$. In the second step we determine the limit of the sequence $\eta_\ep^k\cdot\nabla u_\ep$ for a fixed $k>0$. In the third step we determine the limit $\nu$ and the limit equation \refe{eq2u}. together with \refe{muu}.. The fourth step is devoted to the proof of equality \refe{muu=}. and the corrector results \refe{corDue}. and \refe{corue}..
\par\medskip\noindent
{\it First step:} The parametrix method.
\par\ms\noindent
First, let us define a parametrix for the Laplace operator in $\Om$. To this end consider two sequences of functions $\ph_n, \psi_n$ in $C^\infty_c(\Om)$, such that 
\beq\label{phpsn}
\left\{\ba{l}
0\leq \ph_n,\psi_n\leq 1\quad\mbox{and}\quad\ph_n=1\mbox{ in }{\rm supp}\left(\psi_n\right),\quad\mbox{for any }n\geq 1,
\\ \ecart
\big\{n\geq 1\,:\,{\rm supp}\left(\psi_n\right)\cap K\neq{\rm\O}\big\}\mbox{ is finite},\quad\mbox{for any compact subset }K\subset \Om,
\\ \ecart
\dis \sum_{n\geq 1}\psi_n=1\quad\mbox{in }\Om.
\ea\right.
\eeq
Let $E$ be the fundamental solution of the Laplace operator in $\RR^N$. Then, the operator $P$ defined in $\D'(\Om)$ by
\beq\label{P}
P\zeta:=\sum_{n\geq 1}\psi_n\,E*\left(\ph_n\,\zeta\right),\quad\mbox{for }\zeta\in\D'(\Om),
\eeq
is a parametrix of the Laplace operator (see \cite{AlGe} Chapter~I, for further details) which satisfies
\beq\label{DP}
P(\De\zeta)=\zeta-K\zeta\quad\mbox{and}\quad\De(P\zeta)=\zeta-K'\zeta,\quad\mbox{for }\zeta\in\D'(\Om),
\eeq
where $K,K'$ are two $C^\infty$-kernel operators properly supported in $\Om$. Thanks to the Calder\`on-Zygmund regularity for the Laplace operator (see, e.g., \cite{Gey} Theorem~2.1, and the references therein) we also have for any $p>1$, and $s\in[0,2]$ such that $s+{1\over p}$ is not an integer,
\beq\label{regP}
\mbox{$P$ maps continuously $\D'(\Om)$ to $\D'(\Om)$, and $W^{-s,p}_{\rm loc}(\Om)$ to $W^{2-s,p}_{\rm loc}(\Om)$}.
\eeq
Then, applying \refe{DP}. to the solution $u_\ep$ of \refe{ue}. we have
\beq\label{ue11}
\ba{ll}
u_\ep=P(\De u_\ep)+Ku_\ep& =P\left[\div\big(u\left(\nabla w_\ep-\nabla w\right)\big)\right]+P\left[\div\big(\nabla w_\ep\,(u_\ep-u)\big)\right]
\\ \ecart
& +\,P\big[\div\left(u\nabla w\right)\big]+P\big(\xi_\ep\cdot\nabla u_\ep+b_\ep\cdot\nabla u_\ep-f\big)+Ku_\ep,
\ea
\eeq
Fix $p>1$ close enough to $1$ and $s\in ({N/p'},1)$. Since $u_\ep-u$ strongly converges to $0$ in $L^q(\Om)$ for any $q\in(2,2^*)$, the sequence $\div\big(\nabla w_\ep\,(u_\ep-u)\big)$ strongly converges to $0$ in $W^{-1,p}(\Om)$, hence by \refe{regP}. we have
\[
P\left[\div\big(\nabla w_\ep\,(u_\ep-u)\big)\right]\longrightarrow 0\quad\mbox{strongly in }W^{1,p}_{\rm loc}(\Om).
\]
Moreover, the sequence $\xi_\ep\cdot\nabla u_\ep+b_\ep\cdot\nabla u_\ep$ is bounded in $L^1(\Om)$, thus in $W^{-s,p}(\Om)$ since $s>{N/p'}$. Therefore, again by \refe{regP}. the sequence $\nabla P\big(\xi_\ep\cdot\nabla u_\ep+b_\ep\cdot\nabla u_\ep-f\big)$ is bounded in $W^{1-s,p}(\Om)^N$, and up to a subsequence strongly converges in $L^p_{\rm loc}(\Om)^N$. Hence, since
\[
\xi_\ep\cdot\nabla u_\ep+b_\ep\cdot\nabla u_\ep\confai\xi\cdot\nabla u+\nu+b\cdot\nabla u\quad\mbox{in }\D'(\Om),
\]
we deduce from \refe{ue11}. the strong estimate
\beq\label{ue12}
\ba{l}
\nabla u_\ep-\nabla P\left[\div\big(u\left(\nabla w_\ep-\nabla w\right)\big)\right]
\\ \ecart
=\nabla P\big[\div\left(u\nabla w\right)+\xi\cdot\nabla u+\nu+b\cdot\nabla u-f\big]+\nabla Ku+o_{L^p_{\rm loc}(\Om)^N}(1)
\\ \ecart
=\nabla P\big[\nu+b\cdot\nabla u+\div\left(b\,u\right)-f\big]+\nabla Ku+o_{L^p_{\rm loc}(\Om)^N}(1)
\quad\big(\mbox{since }\xi\cdot\nabla u=\div\left(u\,\xi\right)\big),
\ea
\eeq
where $o_{L^p_{\rm loc}(\Om)^N}(1)$ denotes a sequence which strongly converges to $0$ in $L^p_{\rm loc}(\Om)^N$.
On the other hand,  by \refe{DP}. and \refe{regP}. we have
\[
\ba{ll}
\nabla P\left[\div\big(u\left(\nabla w_\ep-\nabla w\right)\big)\right] & =\nabla P\left[\De\big(u\left(w_\ep-w\right)\big)\right]
-\nabla P\left[\div\big(\nabla u\,(w_\ep-w)\big)\right]
\\ \ecart
& =\nabla P\left[\De\big(u\left(w_\ep-w\right)\big)\right]+o_{L^p_{\rm loc}(\Om)^N}(1)
\\ \ecart
& =\nabla\big(u\left(w_\ep-w\right)\big)+o_{L^p_{\rm loc}(\Om)^N}(1)
\\ \ecart
& =u\left(\nabla w_\ep-\nabla w\right)+o_{L^p_{\rm loc}(\Om)^N}(1).
\ea
\]
Therefore, this combined with \refe{ue12}. yields
\beq\label{ue13}
\nabla u_\ep-u\left(\nabla w_\ep-\nabla w\right)=\nabla P\big[\nu+b\cdot\nabla u+\div\left(b\,u\right)-f\big]+\nabla Ku+o_{L^p_{\rm loc}(\Om)^N}(1).
\eeq
\par\medskip\noindent
{\it Second step:} Estimate of the sequence $\eta_\ep^k\cdot\nabla u_\ep$.
\par\ms\noindent
Set $\eta_\ep^k:=\nabla w_\ep\,1_{\{|\nabla w_\ep|<k\}}$, for a positive integer $k$. Let us determine the limit of $\eta_\ep^k\cdot\nabla u_\ep$ in $L^2_{\rm loc}(\Om)^N$. Using a diagonal extraction, there exists to a subsequence of $\ep$, still denoted by $\ep$, such that $\eta_\ep^k$ weakly converges to some $\eta^k$ in $L^\infty(\Om)^N$ for any $k$.
By the strong convergence~\refe{ue13}. combined with the weak convergence of $u\left(\nabla w_\ep-\nabla w\right)$ to $0$ in $L^p(\Om)^N$ (for $p$ close to $1$) we have
\[
\ba{l}
\eta_\ep^k\cdot\nabla u_\ep-\left(\eta_\ep^k-\eta^k\right)\cdot\left(\nabla w_\ep-\nabla w\right)u
\\ \ecart
\confai\eta^k\cdot\nabla P\big[\nu+b\cdot\nabla u+\div\left(b\,u\right)-f\big]+\eta^k\cdot\nabla Ku\quad\mbox{weakly in }L^p_{\rm loc}(\Om).
\ea
\]
Hence, we get that
\beq\label{sik=}
\si^k=\mu^k\,u+\eta^k\cdot\nabla P\big[\nu+b\cdot\nabla u+\div\left(b\,u\right)-f\big]+\eta^k\cdot\nabla Ku\quad\mbox{in }\Om,
\eeq
where
\beq\label{sikmuk}
\left\{\ba{ll}
\dis \si^k:=\lim_{\ep\to 0}\left[\eta_\ep^k\cdot\nabla u_\ep\right] & \mbox{weakly in }L^2(\Om),
\\ \ecart
\dis \mu^k:=\lim_{\ep\to 0}\left[\left(\eta_\ep^k-\eta^k\right)\cdot\left(\nabla w_\ep-\nabla w\right)\right] & \mbox{weakly in }L^2(\Om).
\ea\right.
\eeq
\par\medskip\noindent
{\it Third step:} Determination of $\nu$ and the limit equation \refe{eq2u}..
\par\ms\noindent
Starting from the limit equation \refe{e1}. we have by \refe{DP}.
\[
u=P\big[\nu+b\cdot\nabla u+\div\left(b\,u\right)-f\big]+Ku\quad\mbox{in }\Om,
\]
hence
\[
\eta^k\cdot\nabla u=\eta^k\cdot\nabla P\big[\nu+b\cdot\nabla u+\div\left(b\,u\right)-f\big]+\eta^k\cdot\nabla Ku\quad\mbox{in }\Om.
\]
Equating this with \refe{sik=}. we obtain
\beq\label{sik}
\si^k=\mu^k\,u+\eta^k\cdot\nabla u\quad\mbox{in }\Om.
\eeq
Now, let us pass to the limit as $k\to\infty$. By virtue of the equi-integrability of $\nabla w_\ep$ in $L^2(\Om)^N$ and by definition \refe{sikmuk}. the sequence $\mu^k$ strongly converges in $L^1(\Om)$ to the function $\mu$ of \refe{equiDwe}., $\eta^k$ strongly converges to $\nabla w$ in $L^2(\Om)^N$, and $\si^k$ strongly converges to $\nu+\nabla w\cdot\nabla u$ in $L^1(\Om)$. Then, up to a subsequence $\mu^k$ converges to $\mu$ a.e. in $\Om$, and by the Fatou lemma combined with equality \refe{sik}. we get
\beq\label{inemuu}
\int_\Om|\mu\,u|\,dx\leq\liminf_{k\to\infty}\int_\Om|\mu^k\,u|\,dx\leq\liminf_{k\to\infty}\int_\Om|\si^k-\eta^k\cdot\nabla u|\,dx
=\int_\Om|\nu|\,dx.
\eeq
We deduce from \refe{inemuu}. and \refe{sik}. that $\mu\,u\in L^1(\Om)$ and
\beq\label{mu=}
\nu=\mu\,u\quad\mbox{in }\Om,
\eeq
which yields the limit equation \refe{eq2u}..
\par
It remains to prove the inequality of \refe{muu}.. Let $v\in L^\infty(\Om)$ and $t\in\RR$. By \refe{equiDwe}., \refe{nu2}. and \refe{mu=}. we have
\beq\label{corDue1}
\ba{l}
\dis \int_\Om\big|\nabla u_\ep-\nabla u-\left(\nabla w_\ep-\nabla w\right)t\,v\big)\big|^2\,dx
\\ \ecart
\dis =\int_\Om|\nabla u_\ep-\nabla u|^2\,dx+t^2\int_\Om|\nabla w_\ep-\nabla w|^2\,v^2\,dx-2\,t\int_\Om\nabla u_\ep\cdot\left(\nabla w_\ep-\nabla w\right)v\,dx+o(1)
\\ \ecart
\dis =\langle f,u\rangle_{H^{-1}(\Om),H^1_0(\Om)}-\int_\Om|\nabla u|^2\,dx+t^2\int_\Om\mu\,v^2\,dx-2\,t\int_\Om\mu\,u\,v\,dx+o(1), 
\ea
\eeq
hence
\[
t^2\int_\Om\mu\,v^2\,dx-2\,t\int_\Om\mu\,u\,v\,dx+\langle f,u\rangle_{H^{-1}(\Om),H^1_0(\Om)}-\int_\Om|\nabla u|^2\,dx\geq 0,\quad\forall\,t\in\RR.
\]
This implies that
\beq\label{inemuuv}
\left(\int_\Om\mu\,u\,v\,dx\right)^2\leq\left(\langle f,u\rangle_{H^{-1}(\Om),H^1_0(\Om)}-\int_\Om|\nabla u|^2\,dx\right)\int_\Om\mu\,v^2\,dx.
\eeq
Let $T_k$, $k>0$, be a function in $C^1(\RR)$ such that
\beq\label{Tk}
0\leq T'_k\leq 1\quad\mbox{and}\quad\left\{\ba{ll} T_k(t)=t & \mbox{if }|t|\leq k \\ |T_k(t)|=k+1 & \mbox{if }|t|\geq k+2.\ea\right.
\eeq
Putting $v=T_k(u)$ as test function in \refe{inemuuv}. and using that $T_k(u)^2\leq u\,T_k(u)$, we get
\[
\left(\int_\Om\mu\,u\,T_k(u)\,dx\right)^2\leq\left(\langle f,u\rangle_{H^{-1}(\Om),H^1_0(\Om)}-\int_\Om|\nabla u|^2\,dx\right)\int_\Om\mu\,u\,T_k(u)\,dx,
\]
hence
\beq\label{ineTku}
\int_\Om\mu\,u\,T_k(u)\,dx\leq\langle f,u\rangle_{H^{-1}(\Om),H^1_0(\Om)}-\int_\Om|\nabla u|^2\,dx.
\eeq
Since $u\,T_k(u)$ is a nondecreasing nonnegative sequence which converges to $u^2$ a.e. in $\Om$, the Beppo-Levi theorem applied to \refe{ineTku}. thus gives inequality \refe{muu}..
\par\medskip\noindent
{\it Fourth step:} Proof of equality \refe{muu=}. and of the corrector results \refe{corDue}., \refe{corue}..
\par\ms\noindent
Assume that $b\in L^q(\Om)^N$, where $q>2$ if $N=2$ and $q=N$ if $N>2$. Let $\ph_n$ be a sequence in $C^1_0(\RR)$ which strongly converges to $u$ in $H^1_0(\Om)$ and a.e. in $\Om$, and such that $|\nabla\ph_n|$ is dominated by a fixed function in $L^2(\Om)$. Putting the truncation function $T_k(\ph_n)$ \refe{Tk}. in the limit equation \refe{eq2u}. we have
\[
\ba{l}
\dis \int_\Om\nabla u\cdot\nabla T_k(\ph_n)\,dx+\int_\Om b\cdot\nabla u\,T_k(\ph_n)\,dx-\int_\Om b\cdot\nabla T_k(\ph_n)\,u\,dx+\int_\Om\mu\,u\,T_k(\ph_n)\,dx
\\ \ecart
\dis =\big\langle f,T_k(\ph_n)\big\rangle_{H^{-1}(\Om),H^1_0(\Om)}.
\ea
\]
Since $b\cdot\nabla u$, $\mu\,u\in L^1(\Om)$ and $b\,u\in L^2(\Om)^N$ (as a consequence of $b\in L^q(\Om)^N$), we can pass to the limit as $n\to\infty$ in the previous equality owing to the Lebesgue dominated convergence theorem, which yields
\beq\label{tfTku}
\ba{l}
\dis \int_\Om\nabla u\cdot\nabla T_k(u)\,dx+\int_\Om b\cdot\nabla u\,T_k(u)\,dx-\int_\Om b\cdot\nabla T_k(u)\,u\,dx+\int_\Om\mu\,u\,T_k(u)\,dx
\\ \ecart
\dis =\big\langle f,T_k(u)\big\rangle_{H^{-1}(\Om),H^1_0(\Om)}.
\ea
\eeq
Then, using that $|T_k(u)|\leq|u|$, $0\leq T'_k(u)\leq 1$, $T_k(u)$ strongly converges to $u$ in $H^1_0(\Om)$, and that $b\,u\in L^2(\Om)^N$, $\mu\,u^2\in L^1(\Om)$, and passing to the limit as $k\to\infty$ owing to the Lebesgue dominated convergence theorem we get
\[
\int_\Om|\nabla u|^2\,dx+\int_\Om b\cdot\nabla u\,u\,dx-\int_\Om b\cdot\nabla u\,u\,dx+\int_\Om\mu\,u^2\,dx
=\langle f,u\rangle_{H^{-1}(\Om),H^1_0(\Om)}\,,
\]
which is \refe{muu=}.. Moreover, the proof of equality \refe{muu=}. with $f=0$ shows that there exists a unique solution $u\in H^1_0(\Om)$ of equation \refe{eq2u}., with $\mu\,u^2\in L^1(\Om)$.
\par
It remains to prove the corrector results.
By the estimate \refe{corDue1}. with $v=T_k(u)$ and $t=1$, combined with equality \refe{muu=}. we have
\beq\label{corDue2}
\ba{l}
\dis \lim_{k\to\infty}\;\lim_{\ep\to 0}\left(\int_\Om\big|\nabla u_\ep-\nabla u-\left(\nabla w_\ep-\nabla w\right)T_k(u)\big|^2\,dx\right)
\\ \ecart
\dis =\lim_{k\to\infty}\left(\int_\Om\mu\big(u-T_k(u)\big)^2\,dx\right)=0.
\ea
\eeq
On the other hand, let $p\in[1,2)$ if $N=2$ and $p=N'$ if $N>2$, and consider an open set $\om\Subset\Om$. By the H\"older inequality we have
\beq\label{corDue3}
\ba{l}
\dis \int_\om\big|\nabla u_\ep-\nabla u-\left(\nabla w_\ep-\nabla w\right)u\big|^p\,dx
\\ \ecart
\dis \leq 2^{p-1}\left(\int_\Om\big|\nabla u_\ep-\nabla u-\left(\nabla w_\ep-\nabla w\right)T_k(u)\big|^p
+\int_\om\left|\nabla w_\ep-\nabla w\right|^p\,\big|u-T_k(u)\big|^p\,dx\right)
\\ \ecart
\dis \leq c\left(\int_\Om\big|\nabla u_\ep-\nabla u-\left(\nabla w_\ep-\nabla w\right)T_k(u)\big|^2\right)^{p\over 2}
+c\left(\int_\om\big|u-T_k(u)\big|^{2p\over 2-p}\,dx\right)^{1-{p\over 2}}
\\ \ecart
\dis \leq c\left(\int_\Om\big|\nabla u_\ep-\nabla u-\left(\nabla w_\ep-\nabla w\right)T_k(u)\big|^2\right)^{p\over 2}
+c\left(\int_{\{|u|>k\}\cap\om}|u|^{2p\over 2-p}\,dx\right)^{1-{p\over 2}}.
\ea
\eeq
Since $u\in L^{2p\over 2-p}(\om)$ by the Sobolev embedding, passing successively to the limits $\ep\to 0$ and $k\to\infty$ in \refe{corDue3}. owing to convergence \refe{corDue2}. we obtain the strong convergence \refe{corDue}..
\par
Let $r\in[1,p)$. Since $w_\ep-w$ strongly converges to $0$ in $L^{2r\over 2-r}(\om)$, by the H\"older inequality the sequence $\left(w_\ep-w\right)\nabla u$ strongly converges to $0$ in $L^r(\om)^N$. Finally, this combined with \refe{corDue}. implies the corrector result \refe{corue}..
\subsection{A counter-example}\label{s.cex}
In this section $\Om$ is a regular bounded open set of $\RR^2$, and $Y:=(-{1\over 2},{1\over 2})^2$.
For fixed $R\in(0,{1\over 2})$ and $\mu>0$, let $r_\ep\in(0,R)$ be defined by the equality
\beq\label{remu}
{2\pi\over\ep^2\left|\ln r_\ep\right|}=\mu.
\eeq
Let $W_\ep$ be the $Y$-periodic function and $w_\ep$ be the $\ep Y$-periodic function defined by
\beq\label{Wewe}
W_\ep(y):=\left\{\ba{cl}
\dis\frac{\ln r-\ln r_\ep}{\ln R-\ln r_\ep} & \mbox{if }r:=|y|\in\,(r_\ep,R)
\\ \ecart
0 & \mbox{si }r\leq r_\ep
\\ \ecart
1 & \mbox{si }r\geq R,
\ea\right.
y\in Y,
\quad w_\ep(x):=W_\ep\left({x\over\ep}\right),\quad x\in\RR^2.
\eeq
Note that by \refe{remu}. we have
\beq\label{Wemu}
{1\over\ep^2}\int_Y|\nabla W_\ep|^2\,dy={2\pi\over\ep^2\ln\left(R/r_\ep\right)}\limo\mu.
\eeq
We then consider the drift $b_\ep$ defined by
\beq\label{bewe}
b_\ep(x):=\nabla w_\ep(x)={1\over\ep}\,\nabla W_\ep\left({x\over\ep}\right),\quad\mbox{for }x\in\RR^2.
\eeq
Taking into account \refe{remu}. it is easy to check that
\beq\label{cwe}
w_\ep\confai 1\quad\mbox{weakly in }H^1(\Om)\mbox{ and weakly-$*$ in }L^\infty(\Om).
\eeq
\par
Let $f$ be a non-zero function in $L^2(\Om)$. We study the asymptotic behavior of the equation \refe{ue}. with the drift $b_\ep$ of \refe{bewe}., i.e. 
\beq\label{uewe}
-\,\De u_\ep+\nabla w_\ep\cdot\nabla u_\ep+{\rm div}\left(\nabla w_\ep\,u_\ep\right)=f\quad\mbox{in }\D'(\Om).
\eeq
We have the following result:
\begin{Thm}\label{thm.cex}
The solution $u_\ep$ of \refe{uewe}. weakly converges in $H^1_0(\Om)$ to the solution $u$ of the equation
\beq\label{uga}
-\De u+\ga\,u=f\quad\mbox{in }\D'(\Om),\quad\mbox{where}\quad\ga:={3\left(e^2-1\right)\over 4\left(e^2+1\right)}\,\mu<\mu.
\eeq
\end{Thm}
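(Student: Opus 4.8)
The plan is to compute the limiting zero-order constant $\ga$ explicitly by a direct periodic homogenization analysis, using the equivalence between \refe{uewe}. and the scalar problem \refe{ezot}. with $\mu_\ep:=|\nabla w_\ep|^2$. First I would perform the change of unknown $u_\ep=e^{-w_\ep}v_\ep$ (equivalently multiply \refe{uewe}. by $e^{-w_\ep}$ and regroup), so that the drift term $\nabla w_\ep\cdot\nabla u_\ep+\div(\nabla w_\ep\,u_\ep)$ is absorbed and the equation becomes $-\De v_\ep+|\nabla w_\ep|^2\,v_\ep=f_\ep$ with $f_\ep:=e^{w_\ep}f$; since by \refe{cwe}. we have $w_\ep\confai 1$ weakly in $H^1$ and weakly-$*$ in $L^\infty$, and $e^{\pm w_\ep}\to e^{\pm 1}$ strongly in every $L^p$, the sequences $u_\ep$ and $e^{-1}v_\ep$ have the same weak $H^1_0$ limit, and it suffices to identify the homogenized limit of the family $-\De v_\ep+\mu_\ep v_\ep=f_\ep$. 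This reduces everything to understanding the capacitary behaviour of $\mu_\ep=|\nabla w_\ep|^2=\ep^{-2}|\nabla W_\ep|^2(\cdot/\ep)$, which is a sum of thin annular spikes of total mass $\to\mu$ per unit volume by \refe{Wemu}..

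Next I would set up the oscillating test function for this Dal Maso--Garroni type problem. The natural corrector is the $\ep Y$-periodic solution $z_\ep$ of the cell problem $-\De z_\ep+\mu_\ep z_\ep=0$ on the torus with a suitable normalization, or more concretely the function that equals the capacitary potential of the annulus $\{r_\ep<|y|<R\}$ relative to the measure $|\nabla W_\ep|^2\,dy$. Because $W_\ep$ is radial and piecewise of the form $\frac{\ln r-\ln r_\ep}{\ln R-\ln r_\ep}$ on the annulus, the relevant ODE $-(r z')'/r+\ep^{-2}|W_\ep'|^2 z=0$ can be integrated essentially explicitly on the three zones $r\le r_\ep$, $r_\ep<r<R$, $r\ge R$; the constant $\ga$ will emerge as $\ga=\lim_\ep \ep^{-2}\int_Y \mu_\ep^{\mathrm{cell}}\,z_\ep\,dy$, i.e. the limiting value of $\int \mu_\ep$ tested against the corrector rather than against $1$. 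The factor $\frac{3(e^2-1)}{4(e^2+1)}$ has exactly the shape one gets from such a computation: the $e^2$ comes from $\exp$ of the boundary data $1$ of $W_\ep$ on $|y|=R$ (the annulus potential runs between $0$ and $1$, so $e^{2W_\ep}$ runs between $1$ and $e^2$), and the rational prefactor $3/4$ from integrating the logarithmic profile against $|W_\ep'|^2\,r\,dr\propto r^{-1}\,dr$ over the annulus. So the key computation is: parametrize by $t=\frac{\ln r-\ln r_\ep}{\ln R-\ln r_\ep}\in[0,1]$ on the annulus, note $\ep^{-2}|\nabla W_\ep|^2\,dx$ pushes forward (after summing over the $\ep^{-2}$ cells) to a measure comparable to $\mu\,dt\otimes dx$, and the corrector solves, in the $t$ variable, a one-dimensional problem whose solution is an explicit combination of $1$ and $e^{\pm 2t}$ (heuristically $z\approx e^{-W_\ep}$-type profiles), giving $\ga=\mu\int_0^1 (\text{explicit function of }t)\,dt=\frac{3(e^2-1)}{4(e^2+1)}\mu$.

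Then the homogenization itself follows the Tartar scheme of Section~\ref{ss.T}, now legitimately applicable because in the scalar equation \refe{ezot}. there is no vector drift and the comparison/energy structure is clean: multiply the equation for $v_\ep$ by $\ph\,z_\ep$ and the cell equation for $z_\ep$ by $\ph\,v_\ep$, subtract, and pass to the limit using the strong $L^2$ convergence of $v_\ep$ (from the uniform $H^1_0$ bound obtained by testing $-\De v_\ep+\mu_\ep v_\ep=f_\ep$ with $v_\ep$ — the $\mu_\ep v_\ep^2$ term is nonnegative and discarded), the weak convergences of $\nabla z_\ep$ and of $\mu_\ep z_\ep$, and the div-curl lemma for the product $\nabla v_\ep\cdot\nabla z_\ep$. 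This produces the limit equation $-\De u+\ga u=f$ with $\ga$ the constant identified above, and since $\ga>0$ the limit problem has a unique solution, so the whole sequence converges. Finally $\ga<\mu$ is read directly off the explicit formula since $\frac{3(e^2-1)}{4(e^2+1)}<1$.

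I expect the main obstacle to be the rigorous construction and asymptotic analysis of the corrector $z_\ep$: one must show that the explicit radial ansatz on the three zones, glued with $C^1$ matching at $r=r_\ep$ and $r=R$ and made $\ep Y$-periodic, indeed yields $z_\ep\confai 1$ in $H^1$, that $\mu_\ep z_\ep$ converges weakly to exactly $\ga$ (not merely $\mu$), and that the error terms from the parametrix/cut-off near $\partial\Om$ are negligible. The delicate point is the precise evaluation of $\lim_\ep \ep^{-2}\int_Y |\nabla W_\ep|^2 z_\ep\,dy$: one has to keep track of how the corrector dips below $1$ inside each annular spike — if one naively replaced $z_\ep$ by $1$ one would get $\mu$ instead of $\ga$ — and this is exactly where the constant $\frac{3(e^2-1)}{4(e^2+1)}$ is produced and where the inequality $\ga<\mu$ (hence the failure of the Section~\ref{ss.equi}. result, due to the non-equi-integrability of $\nabla w_\ep$ visible in \refe{Wemu}.) comes from.
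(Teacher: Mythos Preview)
Your plan is the paper's plan: transform to a Schr\"odinger-type equation $-\De v_\ep+|\nabla w_\ep|^2 v_\ep=f_\ep$, build a periodic radial corrector $z_\ep$ for that operator, and run Tartar's argument by testing the $v_\ep$-equation with $\ph z_\ep$ and the $z_\ep$-equation with $\ph v_\ep$ so that the singular potential cancels. Two concrete points, however, fail as written.

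First, a sign slip: multiplying \refe{uewe}. by $e^{-w_\ep}$ does yield $-\De v_\ep+|\nabla w_\ep|^2 v_\ep=e^{-w_\ep}f$, but for $v_\ep:=e^{-w_\ep}u_\ep$ (the paper takes $v_\ep=e^{1-w_\ep}u_\ep$), \emph{not} for $u_\ep=e^{-w_\ep}v_\ep$; with your sign the first-order terms survive and you do not land on the clean equation. Second, and more substantially, the cell problem $-\De z_\ep+\mu_\ep z_\ep=0$ on the torus has only the trivial solution because $-\De+\mu_\ep$ is coercive there; your $C^1$-matched radial ansatz (harmonic on $r<r_\ep$, Euler solution $a\,r^{\al_\ep}+b\,r^{-\al_\ep}$ on the annulus with $\al_\ep=1/\ln(R/r_\ep)$, constant on $r>R$) forces $z'(r_\ep)=z'(R)=0$ and hence $a=b=0$. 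The paper fixes this by solving instead the \emph{inhomogeneous} problem $-\ep^{-2}\De Z_\ep+\ep^{-2}|\nabla W_\ep|^2 Z_\ep=1_{Q_R}/|Q_R|$ in $Q_R$ with Neumann data on $\partial Q_R$, then extending by the constant $Z_\ep(R)$ and periodizing. The explicit radial solution (powers $r^{\pm\al_\ep}$ plus a particular $r^2$ term) gives $Z_\ep\to\bar Z:=\dfrac{4(e^2+1)}{3(e^2-1)}\,\mu^{-1}$ strongly in $H^1_\sharp(Y)$; the rescaled $z_\ep$ then satisfies $-\De z_\ep+|\nabla w_\ep|^2 z_\ep=\chi^\sharp_{Q_R}(\cdot/\ep)\confai 1$ with $z_\ep\confai\bar Z$ in $H^1(\Om)$, and the Tartar subtraction delivers $\ga=\bar Z^{-1}$. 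Your heuristic that $\ga$ is ``$\mu_\ep$ tested against the corrector rather than against $1$'' is exactly the mechanism, but the corrector must come from an inhomogeneous cell problem to be nonzero.
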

\begin{Rem}\label{rem.cex}
Using the periodicity we can check that the sequence $|b_\ep|^2=|\nabla w_\ep|^2$ converges in the weak-$*$ sense of measures on $\Om$ -- but not weakly in $L^1(\Om)$ -- to the constant $\mu$ defined by \refe{remu}.. Theorem~\ref{thm.cex} can thus be regarded as a counter-example to the statement of Theorem~\ref{thm1} without the equi-integrability assumption on the drift $b_\ep$ in $L^2(\Om)^2$.  Indeed, the conclusion of Theorem~\ref{thm1} would give a limit equation \refe{uga}., with $\ga=\mu$.
\end{Rem}
\noindent
{\bf Proof of Theorem~\ref{thm.cex}.} The proof is divided into two steps. In the first step we construct an oscillating test function $z_\ep$ which solves equation \refe{zewe}. below. In the second step we determine the limit equation \refe{uga}..
\par\bs\noindent
{\it First step:} Construction of an oscillating test function.
\par\ms\noindent
Denote by $Q_r$ the disk of radius $r$ centered at the origin.  Consider the unique solution $Z_\ep$ in $H^1(Q_R)$ of the equation
\beq\label{Ze}
\left\{\ba{rll}
\dis -\,{1\over\ep^2}\,\De Z_\ep+{1\over\ep^2}\,|\nabla W_\ep|^2\,Z_\ep & \dis ={1_{Q_R}\over |Q_R|} & \mbox{in }Q_R
\\ \ecart
\dis {\partial Z_\ep\over\partial n} & =0 & \mbox{on }\partial Q_R.
\ea\right.
\eeq
The function $Z_\ep$ is radial and can be computed explicitly. Using the Laplace operator in polar coordinates and
$|\nabla W_\ep|^2=\al_\ep^2\,r^{-2}\,1_{Q_R\setminus\bar{Q}_{r_\ep}}$, we get
\beq\label{Zer}
Z_\ep(r)=\left\{\ba{rl}
\dis -{\ep^2\over 4\pi R^2}\,r^2+c_\ep & \mbox{if }r\in(0,r_\ep]
\\ \ecart
\dis a_\ep\,r^{\al_\ep}+b_\ep\,r^{-\al_\ep}+{\ep^2\over\pi R^2\left(\al_\ep^2-3\right)}\,r^2 & \mbox{if }r\in(r_\ep,R],
\ea\right.
\quad\mbox{where }\al_\ep:={1\over\ln(R/r_\ep)}.
\eeq
The constants $a_\ep$, $b_\ep$, $c_\ep$ are determined owing to the boundary condition on $\partial Q_R$ and to the transmission conditions on $\partial Q_{r_\ep}$, i.e.
\beq\label{tbcZe}
Z_\ep'(R)=0\quad\mbox{and}\quad Z_\ep(r_\ep^+)=Z_\ep(r_\ep^-),\quad Z_\ep'(r_\ep^+)=Z_\ep'(r_\ep^-).
\eeq
We extend $Z_\ep$ by the constant value $Z_\ep(R)$ in $Y\setminus\bar{Q}_R$, and by $Y$-periodicity in the whole space~$\RR^2$. The $Y$-periodic extension is still denoted by $Z_\ep$.
An explicit computation combined with \refe{remu}. yields
\beq\label{bZ}
Z_\ep\longrightarrow\bar{Z}:={4\left(e^2+1\right)\over 3\left(e^2-1\right)}\,{1\over\mu}\quad\mbox{strongly in }H^1_\sharp(Y).
\eeq
As a consequence of \refe{Ze}., \refe{Zer}. the rescaled function $z_\ep(x):=Z_\ep({x\over\ep})$ is solution of the equation
\beq\label{zewe}
-\,\De z_\ep+|\nabla w_\ep|^2\,z_\ep=\chi^\sharp_{Q_R}\left({x\over\ep}\right)\quad\mbox{in }\D'(\RR^2),
\eeq
where $\chi^\sharp_{Q_R}$ is the $Y$-periodic function agreeing with ${1_{Q_R}\over|Q_R|}$ in the period cell $Y$.
Moreover, the following convergences hold
\beq\label{czechep}
z_\ep\confai\bar{Z}\;\;\mbox{weakly in }H^1(\Om)\quad\mbox{and}\quad\chi^\sharp_{Q_R}\left({x\over\ep}\right)\confai 1\;\;\mbox{weakly-$*$ in }L^\infty(\Om),
\eeq
where the constant $\bar{Z}$ is defined by \refe{bZ}..
\par\bs\noindent
{\it Second step:} Determination of the limit equation \refe{uga}..
\par\ms\noindent
Define the function $v_\ep:=e^{1-w_\ep}\,u_\ep$. Then, equation \refe{uewe}. is equivalent to
\beq\label{veve}
-\,\De v_\ep+|\nabla w_\ep|^2\,v_\ep=e^{1-w_\ep}\,f\quad\mbox{in }\D'(\Om).
\eeq
G.~Dal Maso, A.~Garroni \cite{DMGa} proved that this class of equations is stable under homogenization. In the present case, the use of the oscillating test function $z_\ep$ will allow us to obtain the limit equation \refe{uga}..
\par
On the one hand, choosing $v=w_\ep$ in \refe{vue}. we get
\beq
\int_\Om|\nabla w_\ep|^2\,u^2_\ep\,dx-\int_\Om\nabla w_\ep\cdot\nabla u_\ep\,u_\ep\,dx
=\int_\Om|\nabla u_\ep|^2\,w_\ep\,dx-\int_\Om f\,w_\ep\,u_\ep\,dx\leq c,
\eeq
since $u_\ep$ is bounded in $H^1_0(\Om)$ and $0\leq w_\ep\leq 1$. Then, by the Cauchy-Schwarz inequality we have
\beq\label{eDweue}
\ba{ll}
\dis \int_\Om|\nabla w_\ep|^2\,u^2_\ep\,dx
& \dis \leq c+c\left(\int_\Om|\nabla u_\ep|^2\,dx\right)^{1\over 2}\left(\int_\Om|\nabla w_\ep|^2\,u^2_\ep\,dx\right)^{1\over 2}
\\ \ecart
& \dis \leq c+c'\left(\int_\Om|\nabla w_\ep|^2\,u^2_\ep\,dx\right)^{1\over 2},
\ea
\eeq
hence $u_\ep\nabla w_\ep$ is bounded in $L^2(\Om)^2$. This combined with convergence \refe{cwe}. implies that $v_\ep$ weakly converges to $u$ in $H^1_0(\Om)$.
\par
On the other hand, for $\ph\in C^\infty_c(\Om)$, putting the functions $\ph\,z_\ep$ in \refe{veve}. and $\ph\,v_\ep$ in \refe{zewe}., taking the difference of the two equalities, and passing to the limit owing to convergences \refe{czechep}. we obtain the equality
\beq\label{bZu}
\int_\Om\nabla u\cdot\nabla\ph\,\bar{Z}\,dx+\int_\Om\ph\,u\,dx=\int_\Om f\,\ph\,\bar{Z}\,dx,\quad\mbox{for any }\ph\in C^\infty_c(\Om).
\eeq
which is the variational formulation of equation \refe{uga}., with $\ga=\bar{Z}^{-1}$. \cqfd
\section{A Stokes equation with a drift term}\label{s.Sto}
\subsection{The classical case}\label{ss.TS}
In \cite{Tar1,Tar2} L.~Tartar noted that the nonlinear term of the three-dimensional Navier-Stokes equation for the divergence free velocity $u$ reads as
\beq\label{nlt3NS}
\left(u\cdot\nabla\right)u=\Div\left(u\otimes u\right)=\curl\left(u\right)\times u+\nabla\left({\tex{1\over 2}}\,|u|^2\right).
\eeq
This led him to study the perturbed Stokes equation
\beq\label{3pS}
-\,\De u+\curl\left(v\right)\times u+\nabla p=f,
\eeq
where a given vector-valued function $v$ replaced the velocity $u$ of the Navier-Stokes equation.
The equivalent of transformation \refe{nlt3NS}. in two-dimension is
\beq\label{nlt2NS}
\ba{ll}
& \dis \Div\left(u\otimes u\right)=\curl\left(u\right)Ju+\nabla\left({\tex{1\over 2}}\,|u|^2\right),
\\ \ecart
\mbox{where} & \curl\left(u\right):=\partial_1 u_2-\partial_2 u_1\quad\mbox{and}\quad J:=\begin{pmatrix} 0 & -1 \\ 1 & 0\end{pmatrix}.
\ea
\eeq
More generally equality \refe{nlt2NS}. extends for any divergence free functions $u,v$ to the following one
\beq\label{Duv}
\curl\left(v\right)Ju=\Div\left(v\otimes u\right)+\left(Du\right)^T v-\nabla\left(v\cdot u\right).
\eeq
Similarly to \refe{3pS}. this leads us to the two-dimensional perturbed Stokes equation
\beq\label{2pS}
-\,\De u+\curl\left(v\right)Ju+\nabla p=f.
\eeq
\par\bs
Let $\Om$ be a bounded domain of $\RR^2$.  Let $v_\ep$ be a sequence in $L^\infty(\Om)^2$ and let $f$ be a distribution in $H^{-1}(\Om)^2$. Consider the perturbed Stokes equation
\beq\label{pSe}
\left\{\ba{rll}
-\,\De u_\ep+\curl\left(v_\ep\right)Ju_\ep+\nabla p_\ep & =f & \mbox{in }\Om
\\ \ecart
\div\left(u_\ep\right) & =0 &  \mbox{in }\Om
\\ \ecart
u_\ep & =0 &  \mbox{on }\partial\Om.
\ea\right.
\eeq
\par
In the three-dimensional case where $\curl\left(v_\ep\right)\times u_\ep$ replaces $\curl\left(v_\ep\right)Ju_\ep$, L.~Tartar \cite{Tar2} derived a Stokes equation with a Brinkman law under the assumption that $v_\ep$ is bounded in $L^3(\Om)^3$ (see Introduction). Mimicking the Tartar approach in dimension two we can derive a similar homogenized equation using the test function $w_\ep^\la$, for $\la\in\RR^2$, solution of the Stokes equation
\beq\label{Sewel}
\left\{\ba{rll}
-\,\De w_\ep^\la+\Div\big((v_\ep-v)\otimes\la\big)+\nabla q_\ep^\la & =0 & \mbox{in }\Om
\\ \ecart
\div\left(w_\ep^\la\right) & =0 &  \mbox{in }\Om
\\ \ecart
w_\ep^\la & =0 & \mbox{on }\partial\Om.
\ea\right.
\eeq
Then, we have the following result:
\begin{Thm}\label{thmT2}
Assume that $v_\ep$ is bounded in $L^r(\Om)^2$, with $r>2$. Then, the solution $u_\ep$ of \refe{pSe}. weakly converges in $H^1_0(\Om)$ to the solution $u$ of the Brinkman equation
\beq\label{eqBT}
\left\{\ba{rll}
-\,\De u+\curl\left(v\right)Ju+\nabla p+{M}u & =f & \mbox{in }\Om
\\ \ecart
\div\left(u\right) & =0 &  \mbox{in }\Om
\\ \ecart
u & =0 &  \mbox{on }\partial\Om,
\ea\right.
\eeq
where ${M}$ is the positive definite symmetric matrix-valued function defined by
\beq\label{MTr}
\left\{\ba{rlll}
\dis (Dw_\ep^\la)^T v_\ep & \confai {M}\la & \mbox{weakly in }L^{2r\over 2+r}(\Om)^2\mbox{ and in }L^{r\over 2}_{\rm loc}(\Om)^2
\\ \ecart
\dis Dw_\ep^\la\cdot Dw_\ep^\mu & \confai {M}\la\cdot\mu & \mbox{weakly-$*$ in }\M(\Om)^2\mbox{ and in }L^{r\over 2}_{\rm loc}(\Om)^2,
\ea\right.
\quad\mbox{for }\la,\mu\in\RR^2.
\eeq
Moreover, the zero-order term of \refe{eqBT}. is given by the convergences
\beq\label{Dueve}
\left\{\ba{rlll}
(Du_\ep)^T(v_\ep-v) & \confai {M}u & \mbox{weakly in }L^{2r\over 2+r}(\Om)^2
\\ \ecart
Du_\ep:Dw_\ep^\la & \confai {M}u\cdot\la & \mbox{weakly-$*$ in }\M(\Om)\mbox{ and in }L^{2r\over 2+r}_{\rm loc}(\Om)^2.
\ea\right.
\eeq
\end{Thm}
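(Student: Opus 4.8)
The plan is to follow faithfully the Tartar oscillating test function method, using $w_\ep^\la$ from \refe{Sewel}. as the oscillating test function, exactly as in the proof of \refT thmT1. but now carrying along the pressures. First I would establish the a priori bound: testing \refe{pSe}. with $u_\ep$ itself, the term $\int_\Om\curl(v_\ep)Ju_\ep\cdot u_\ep\,dx=0$ because $Ju_\ep\perp u_\ep$ pointwise, so $\int_\Om|Du_\ep|^2\,dx\leq\|f\|_{H^{-1}}\|u_\ep\|_{H^1_0}$; hence $u_\ep\confai u$ in $H^1_0(\Om)$ up to a subsequence, and the associated pressures $p_\ep$ (normalized to zero mean) are bounded in $L^2(\Om)$. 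Similarly, rewriting the drift term via \refe{Duv}. as $\curl(v_\ep)Ju_\ep=\Div(v_\ep\otimes u_\ep)+(Du_\ep)^T v_\ep-\nabla(v_\ep\cdot u_\ep)$ and absorbing the gradient into the pressure, the weak $H^1_0$-limit $u$ solves a limit equation
\[
-\,\De u+\curl(v)Ju+\nabla p+\nu=f,\quad\div(u)=0,
\]
where the vector measure $\nu$ is the weak-$*$ limit of $(Du_\ep)^T(v_\ep-v)-$ (the part already accounted for by the weak limits). The Calder\'on--Zygmund / Stokes regularity for \refe{Sewel}. with data in $L^r$, $r>2$, gives $w_\ep^\la$ bounded in $W^{1,r}(\Om)^2\hookrightarrow C(\bar\Om)^2$, which is the analogue of the role played by $q>N$ in \refT thmT1., so all the boundary-term manipulations below make sense.

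Next I would run the double cross-testing. For $\ph\in C^\infty_c(\Om)$ scalar, put $\ph\,w_\ep^\la$ (which is not divergence free, but one may either project it or, better, keep the pressure terms explicit) into the weak form of \refe{pSe}., and put $\ph\,u_\ep$ into the weak form of \refe{Sewel}., and subtract. Using the strong $L^p$-convergence of $u_\ep$ for $p<2^*=\infty$ (here $N=2$), the uniform convergence of $w_\ep^\la$, and the weak convergences in \refe{MTr}. defining $M$, one passes to the limit; the cross terms $\int Du_\ep:Dw_\ep^\la\,\ph$ and $\int (Dw_\ep^\la)^T v_\ep\cdot(\text{stuff})$ are exactly the ones controlled by \refe{MTr}. and \refe{Dueve}.. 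Then one writes down the two "pure" identities: testing the limit equation for $u$ against $\ph\,w^\la$ (where $w^\la$ is the weak $W^{1,r}_0$-limit of $w_\ep^\la$, solving the limit Stokes problem), and testing the limit Stokes equation for $w^\la$ against $\ph\,u$. Equating the difference of these with the limit of the cross-tested identity, and then separately equating the "diagonal" identities obtained by testing \refe{Sewel}. against $\ph\,w_\ep^\mu$ and its limit against $\ph\,w^\mu$ — this is where the symmetry of $M$ and the second line of \refe{MTr}. enter — one identifies $\nu=Mu$, giving \refe{eqBT}. together with the first line of \refe{Dueve}.. The positive definiteness and symmetry of $M$ follow from the quadratic identity $Dw_\ep^\la:Dw_\ep^\mu\confai M\la\cdot\mu$ combined with $\int_\Om Dw_\ep^\la:Dw_\ep^\la\,dx\geq 0$ and a Korn/Stokes lower bound showing $M\la\cdot\la=0\Rightarrow\la=0$.

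The local convergences (the $L^{r/2}_{\rm loc}$ statements in \refe{MTr}. and the $L^{2r/(r+2)}_{\rm loc}$ one in \refe{Dueve}.) I would obtain not from cross-testing but from a div-curl argument in the spirit of the Murat--Tartar lemma: $Du_\ep$ is bounded in $L^2$ and "curl-free up to the divergence-free constraint", while $(v_\ep-v)$ and $Dw_\ep^\la$ carry the oscillations, and the relevant products localize because the compensation is purely interior. The main obstacle, as the authors flag in the introduction, is that \refe{Sewel}. couples velocity and pressure, so the naive test function $\ph\,w_\ep^\la$ destroys the divergence-free condition; handling the induced pressure terms cleanly — showing that $\int_\Om q_\ep^\la\,\div(\ph\,u_\ep)\,dx=\int_\Om q_\ep^\la\,\nabla\ph\cdot u_\ep\,dx\to\int_\Om q^\la\,\nabla\ph\cdot u\,dx$ and that the symmetric term from \refe{pSe}. behaves likewise, so that the pressure contributions cancel in the subtraction — is the delicate bookkeeping step, and it is precisely the "double parametrix method carrying on both the velocity $u_\ep$ and the pressure $p_\ep$" announced for the harder Theorem \ref{thm2}; here, with $r>2$, the extra integrability makes all these pairings legitimate without truncation.
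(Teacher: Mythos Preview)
Your core approach --- Tartar cross-testing with the oscillating functions $w_\ep^\la$ from \refe{Sewel}. --- is exactly the paper's. Two clarifications are in order, though.

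First, the data in \refe{Sewel}. is $(v_\ep-v)\otimes\la$, which converges weakly to zero; hence $w_\ep^\la\confai 0$ in $H^1_0(\Om)^2$ and in $W^{1,r}_{\rm loc}(\Om)^2$ (the paper claims only local $W^{1,r}$, via interior Stokes regularity; a global bound would need boundary regularity of~$\Om$). So your $w^\la$ is identically zero and all the ``pure'' identities you plan to write down are trivial. The paper's argument is correspondingly direct: after putting $\ph\,w_\ep^\la$ into \refe{pSe}. and $\ph\,u_\ep$ into \refe{Sewel}., one reads off immediately that
\[
Du_\ep:Dw_\ep^\la-(Dw_\ep^\la)^T v_\ep\cdot u_\ep\confai 0
\quad\mbox{and}\quad
(Du_\ep)^T(v_\ep-v)\cdot\la-(Dw_\ep^\la)^T v_\ep\cdot u_\ep\confai 0
\]
in $\D'(\Om)$. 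Since $(Dw_\ep^\la)^T v_\ep\confai M\la$ in $L^{2r/(2+r)}$ by definition and $u_\ep\to u$ strongly in every $L^s(\Om)$, both expressions converge to $Mu\cdot\la$, which is \refe{Dueve}.; no detour through limit equations for $w^\la$ is needed.

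Second, the ``double parametrix method carrying on both the velocity and the pressure'' is \emph{not} used here; it is reserved for Theorem~\ref{thm2}, the equi-integrability case. For $r>2$ the pressure terms generated by the non-divergence-free test functions are harmless: putting $\ph\,w_\ep^\la$ into \refe{pSe}. produces $\int_\Om p_\ep\,\nabla\ph\cdot w_\ep^\la\,dx\to 0$ (since $p_\ep$ is bounded in $L^2_{\rm loc}$ and $w_\ep^\la\to 0$ strongly in $L^2_{\rm loc}$), and symmetrically $\int_\Om q_\ep^\la\,\nabla\ph\cdot u_\ep\,dx\to 0$ (since $q_\ep^\la\confai 0$ weakly in $L^2$ and $u_\ep\to u$ strongly). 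Likewise, no div-curl argument is needed for the local statements in \refe{MTr}. and \refe{Dueve}.: they follow directly from the bounds $Dw_\ep^\la\in L^r_{\rm loc}$, $v_\ep\in L^r$, $Du_\ep\in L^2$, once the distributional limits have been identified by the cross-testing.
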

\begin{proof} By the representation formula \refe{Duv}. we have
\beq\label{Duve}
\curl\left(v_\ep\right)Ju_\ep=\left(Du_\ep\right)^T v_\ep+\Div\left(v_\ep\otimes u_\ep\right)-\nabla\left(v_\ep\cdot u_\ep\right).
\eeq
Hence, the variational formulation of \refe{pSe}. reads as
\beq\label{vfpSe}
\ba{r}
\dis \int_\Om Du_\ep:D\ph\,dx+\int_\Om(Du_\ep)^T v_\ep\cdot\ph\,dx-\int_\Om(v_\ep\otimes u_\ep):D\ph\,dx=\langle f,u_\ep\rangle_{H^{-1}(\Om)^2,H^1_0(\Om)^2}\,,
\\ \ecart
\mbox{for any }\ph\in H^1_0(\Om)^2,\ \div\left(\ph\right)=0.
\ea
\eeq
By the Lax-Milgram theorem there exists a unique divergence free function $u_\ep\in H^1_0(\Om)^2$ solution of \refe{vfpSe}..
Then, putting the velocity $u_\ep$ as test function in \refe{vfpSe}. it follows that
\beq\label{estDue}
\int_\Om|Du_\ep|^2\,dx=\langle f,u_\ep\rangle_{H^{-1}(\Om)^2,H^1_0(\Om)^2}\,,
\eeq
which implies that $u_\ep$ is bounded in $H^1_0(\Om)^2$. Let $\om$ be a regular domain of $\Om$. Applying \refe{vfpSe}. to divergence free functions in $H^1_0(\om)^2$, there exists a unique $p_\ep$ in $L^2(\om)/\RR$ such that equation \refe{pSe}. holds in $\D'(\om)^2$. Moreover, by \refe{Duve}. and the boundedness of $v_\ep$ in $L^r(\Om)^2$ the sequence $\nabla p_\ep$ is bounded in $H^{-1}(\om)^2$. Hence, due to the regularity of $\om$ the sequence $p_\ep$ is bounded in $L^2(\om)$.  Then, considering an exhaustive sequence of regular domains the union of which is $\Om$, we can construct in $\Om$ a pressure $p_\ep$ which is bounded in $L^2_{\rm loc}(\Om)$. Therefore, up to a subsequence the following convergences hold
\beq\label{conuepe}
\left\{\ba{rll}
u_\ep & \confai u & \mbox{weakly in }H^1_0(\Om)^2
\\ \ecart
p_\ep & \confai p & \mbox{weakly in }L^2_{\rm loc}(\Om)/\RR,
\ea\right.
\eeq
\par
Now, in view of \refe{Duve}. it is enough to determine the limit of the term $\left(Du_\ep\right)^T v_\ep$.
By the regularity results for the Stokes equation (see, e.g., \cite{Lad} Theorem~2, p.~67) the sequences $w_\ep^\la$ and $q_\ep^\la$ satisfy
\beq\label{cwqel}
\left\{\ba{rlll}
w_\ep^\la & \confai 0 & \mbox{weakly in }H^1(\Om)^2\mbox{ and in }W^{1,r}_{\rm loc}(\Om)^2
\\ \ecart
q_\ep^\la & \confai 0 & \mbox{weakly in }L^2(\Om)/\RR\mbox{ and in }L^r_{\rm loc}(\Om)/\RR.
\ea\right.
\eeq
which imply convergence \refe{MTr}.. Let $\ph\in C^\infty_c(\Om)$. Following the Tartar method we put $\ph\,w_\ep^\la$ in equation \refe{pSe}. and $\ph\,u_\ep$ in equation \refe{Sewel}.. Then, from the representation \refe{Duve}., the convergences \refe{conuepe}., \refe{cwqel}. and the boundedness of $v_\ep$ in $L^r(\Om)$ we deduce that
\[
\left\{\ba{l}
\dis \int_\Om Du_\ep:D w_\ep^\la\,\ph\,dx-\int_\Om(v_\ep\otimes u_\ep):Dw_\ep^\la\,dx=o(1)
\\ \ecart
\dis \int_\Om Dw_\ep^\la:Du_\ep \,\ph\,dx-\int_\Om\big((v_\ep-v)\otimes\la\big):Du_\ep\,\ph\,dx=o(1),
\ea\right.
\quad\mbox{for any }\ph\in C^\infty_c(\Om),
\]
hence
\beq\label{uewel}
\left\{\ba{rl}
Du_\ep:D w_\ep^\la-(Dw_\ep^\la)^T v_\ep\cdot u_\ep & \confai 0
\\ \ecart
(Du_\ep)^T(v_\ep-v)\cdot\la-(Dw_\ep^\la)^T v_\ep\cdot u_\ep & \confai 0
\ea\right.
\quad\mbox{in }\D'(\Om).
\eeq
By virtue of the strong convergence of $u_\ep$ in any $L^s(\Om)^2$ space for $s\in(1,\infty)$, convergences \refe{uewel}. and \refe{MTr}. imply \refe{Dueve}..
This combined with \refe{Duve}. yields finally the limit problem \refe{eqBT}..
\end{proof}
\begin{Rem}
It can be shown that
\beq\label{muHS}
M(x)=\int_{S^1}\big[{\rm tr}\left(\mbox{\bmu}\left(x,d\xi\right)\right)-\mbox{\bmu}\left(x,d\xi\right)\xi\cdot\xi\big]\,\xi\otimes\xi
\eeq
where {\bmu} is the matrix-valued $H$-measure of the sequence $v_\ep$ (see \cite{Tar3,Tar4}).
\end{Rem}
\par
The case where $v_\ep$ is only bounded in $L^2(\Om)^2$ is much more delicate. On the one hand, under additional assumptions we will extend the Tartar result when $v_\ep$ is bounded and equi-integrable in $L^2(\Om)^2$. On the other hand, we will give an example of a sequence $v_\ep$ for which the homogenized Brinkman equation is not the one obtained by the Tartar procedure.
\subsection{The case under an equi-integrability condition}\label{ss.equiS}
In this section we make the following weaker assumption on the drift,
\beq\label{equive}
v_\ep\confai v\;\;\mbox{weakly in }L^2(\Om)^2\quad\mbox{and}\quad v_\ep\mbox{ is equi-integrable in }L^2(\Om)^2.
\eeq
Then, we have the following extension of Theorem~\ref{thmT2}:
\begin{Thm}\label{thm2}
\hfill
\par\ss\noindent
$i)$ Under the equi-integrability assumption \refe{equive}. the solution $u_\ep$ of \refe{ue}. weakly converges in $H^1_0(\Om)$ to the solution $u$ of equation \refe{eqBT}. with 
\beq\label{MTuu}
\int_\Om {M}u\cdot u\,dx\leq\langle f,u\rangle_{H^{-1}(\Om)^2,H^1_0(\Om)^2}-\int_\Om|Du|^2\,dx,
\eeq
where ${M}$ is the positive definite symmetric matrix-valued function defined by
\beq\label{MT}
\left\{\ba{rlll}
\dis (Dw_\ep^\la)^T v_\ep & \confai {M}\la & \mbox{weakly in }L^1(\Om)^2
\\ \ecart
\dis Dw_\ep^\la:Dw_\ep^\mu & \confai {M}\la\cdot\mu & \mbox{weakly-$*$ in }\M(\Om)^2,
\ea\right.
\quad\mbox{for }\la,\mu\in\RR^2.
\eeq
\par\noindent
$ii)$ Also assume that $\Om$ has a Lipschitz boundary, $v\in L^r(\Om)^2$, with $r>2$, and ${M}\in L^m(\Om)^{2\times 2}$, with $m>1$. Then, we have the equality
\beq\label{MTuu=}
\int_\Om|Du|^2\,dx+\int_\Om {M}u\cdot u\,dx=\langle f,u\rangle_{H^{-1}(\Om)^2,H^1_0(\Om)^2}\,,
\eeq
and there exists a unique solution $u\in H^1_0(\Om)^2$ of equation \refe{eqBT}., with ${M}u\cdot u\in L^1(\Om)$.
\\
Moreover, we have the corrector result
\beq\label{corueWe}
u_\ep-u-W_\ep\,u\;\longrightarrow\;0\quad\mbox{strongly in }W^{1,1}(\Om)^2,
\eeq
where $W_\ep$ is the matrix-valued function defined by
\beq\label{We}
W_\ep\,\la:=w_\ep^\la,\quad\mbox{for }\la\in\RR^2.
\eeq
\end{Thm}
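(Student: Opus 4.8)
\textbf{Proof plan for Theorem~\ref{thm2}.}

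The plan is to mimic the structure of the proof of Theorem~\ref{thm1}, replacing the single parametrix of the Laplace operator by a \emph{double parametrix} that handles the Stokes system, i.e. both the velocity $u_\ep$ and the pressure $p_\ep$. First I would establish the a priori bounds: testing \refe{vfpSe}. with $u_\ep$ gives \refe{estDue}., so $u_\ep$ is bounded in $H^1_0(\Om)^2$, and from \refe{Duve}. together with equi-integrability of $v_\ep$ one recovers a pressure $p_\ep$ bounded in $L^2_{\rm loc}(\Om)/\RR$ as in the classical case. Then, using the div-curl lemma of Murat--Tartar applied to the pair $(v_\ep-v,\,Du_\ep)$ after splitting $v_\ep=v+(\text{equi-integrable part})+(\text{part with controlled divergence})$ via a Hodge-type decomposition, one identifies the defect measure: the sequence $(Du_\ep)^T(v_\ep-v)$ weakly-$*$ converges to $(Du)^T(v-v)+\nu=\nu$ for some $\RR^2$-valued measure $\nu$, and by equi-integrability $\nu$ is in fact an $L^1$ weak limit of a product involving only $w_\ep^\la$-type quantities, paralleling \refe{nu2}..

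Next comes the parametrix step, which I expect to be the technical heart of part $i)$. Using a parametrix $P$ for the Stokes operator (or, more concretely, the scalar Laplace parametrix \refe{P}.--\refe{regP}. applied componentwise together with the standard reconstruction of the pressure from $-\De u_\ep+\nabla p_\ep$), I would write $u_\ep$ as the solution of a fixed point identity of the shape
\[
u_\ep - W_\ep\,u = \nabla P\big[\text{lower order terms involving }\nu,\,v\cdot\nabla u,\,f\big] + (\text{smoothing})\,u + o_{L^p_{\rm loc}}(1),
\]
where $W_\ep\la:=w_\ep^\la$ and $p>1$ is close to $1$; the key cancellation, as in \refe{ue13}., is that the ``bad'' term $\Div\big((v_\ep-v)\otimes u\big)$ is exactly absorbed by $\De(W_\ep u)$ modulo terms that are $o(1)$ in $L^p_{\rm loc}$ because $u_\ep-u\to 0$ strongly in $L^q$ for $q<\infty$ in dimension two. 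With this strong $L^p_{\rm loc}$ estimate in hand, I would introduce the truncated test functions: for $\la$ fixed and $k>0$, set $\eta^k_{\ep,\la}:=Dw_\ep^\la\,1_{\{|Dw_\ep^\la|<k\}}$, pass to the limit in $\eta^k_{\ep,\la}:Du_\ep$ using the strong estimate (legitimate since $\eta^k_{\ep,\la}$ is bounded), obtain an identity of the form $\si^{k}_\la = \mu^k(\la)\cdot u + \eta^k_\la:Du$ analogous to \refe{sik}., and finally let $k\to\infty$. Equi-integrability of $Dw_\ep^\la$ (which follows from the Stokes regularity and equi-integrability of $v_\ep$) forces $\mu^k(\la)\to M\la$ strongly in $L^1$ and $\eta^k_\la\to Dw^\la=0$ strongly in $L^2$, and Fatou's lemma yields $Mu\cdot u\in L^1(\Om)$, $\nu=Mu$ (in the sense $(Du_\ep)^T(v_\ep-v)\confai Mu$), the limit equation \refe{eqBT}., and the inequality \refe{MTuu}. by the same quadratic-form argument as in \refe{corDue1}.--\refe{inemuuv}..

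For part $ii)$, under the extra integrability $v\in L^r$, $M\in L^m$ and Lipschitz regularity of $\Om$, I would upgrade the inequality \refe{MTuu}. to the equality \refe{MTuu=}. by testing \refe{eqBT}. with truncations $T_k(\ph_n)$ of an approximating sequence $\ph_n\to u$ in $H^1_0(\Om)^2$ and passing to the limit by dominated convergence, exactly as in the fourth step of the proof of Theorem~\ref{thm1} (using $v\cdot\nabla u$, $Mu\cdot u\in L^1$, $v\otimes u\in L^s$ with $s>1$); the energy equality then delivers uniqueness. The corrector result \refe{corueWe}. in $W^{1,1}(\Om)^2$ follows by combining the $L^p_{\rm loc}$ corrector $Du_\ep-Du-DW_\ep\,u\to0$ coming from the quadratic identity with $v=T_k(u)$ and $t=1$, with a boundary-layer argument near $\partial\Om$: here the Lipschitz regularity of $\Om$ and the global $L^r$, $L^m$ bounds let one control the contribution of a shrinking neighbourhood of $\partial\Om$ and patch the local correctors into a global one. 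The main obstacle, as the authors themselves flag, is precisely the identification of $\nu=Mu$ and the integrability $Mu\cdot u\in L^1$ without a maximum principle: in the scalar case one exploits the comparison principle through the truncations $T_k(u)$ applied directly to $u_\ep$, whereas here the vector-valued structure forces the more elaborate truncation of the \emph{test functions} $w_\ep^\la$ and the double-parametrix fixed-point representation to make the passage to the limit rigorous.
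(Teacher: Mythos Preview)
Your overall architecture --- double parametrix for velocity and pressure, truncation to pass to the limit, quadratic-form argument for the energy inequality --- matches the paper's, but two of the concrete mechanisms you propose differ from the paper's and would not go through as stated.

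\textbf{Where the truncation goes.} You propose to truncate the gradients $Dw_\ep^\la$, setting $\eta^k_{\ep,\la}=Dw_\ep^\la\,1_{\{|Dw_\ep^\la|<k\}}$, and to mimic the scalar argument after establishing that $u_\ep-W_\ep u$ has a strongly convergent gradient in $L^p_{\rm loc}$. The paper does neither. The double parametrix only yields the strong convergence in $W^{1,r}_{\rm loc}$ (with $r$ close to $1$) of $u_\ep-z_\ep$, where $z_\ep:=P[g_\ep-\nabla P(\div\,g_\ep)]$ and $g_\ep:=\Div\big((v_\ep-v)\otimes u_\ep\big)-\nabla\big((v_\ep-v)\cdot u_\ep\big)$; it does \emph{not} directly give $u_\ep-W_\ep u$ strongly convergent, because the cancellation you invoke is obstructed by the pressures $q_\ep^\la$ in $\De w_\ep^\la=\Div((v_\ep-v)\otimes\la)+\nabla q_\ep^\la$. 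Accordingly, the paper truncates the drift itself, $v_\ep^k:=v_\ep\,1_{\{|v_\ep|<k\}}$, and introduces \emph{new} Stokes problems
\[
-\De w_\ep^{\la,k}+\Div\big((v_\ep^k-v^k)\otimes\la\big)+\nabla q_\ep^{\la,k}=0,\qquad\div\big(w_\ep^{\la,k}\big)=0,
\]
which by Stokes regularity are bounded in $W^{1,s}_{\rm loc}$ for \emph{every} $s<\infty$. One then applies Tartar's oscillating test function method to the pair $(z_\ep,w_\ep^{\la,k})$ to obtain $(Dz_\ep)^T v_\ep^k\confai M^k u$; this step needs $w_\ep^{\la,k}$ both to solve a PDE and to lie in $W^{1,r'}_{\rm loc}$, and both properties are destroyed by a bare cut-off of $Dw_\ep^\la$. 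Passing $k\to\infty$ uses the energy estimate $\|Dw_\ep^{\la,k}-Dw_\ep^\la\|_{L^2}\to 0$ (uniformly in $\ep$), which comes from the equi-integrability of $v_\ep$.

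\textbf{Truncations in the divergence-free setting.} In part $ii)$ you plan to test the limit equation with $T_k(\ph_n)$ and, for the inequality, with $T_k(u)$, copying the scalar fourth step. This does not work: componentwise truncation of a divergence-free field is not divergence-free, so $T_k(\ph_n)$ is not an admissible test function, and the scalar inequality $T_k(u)^2\le u\,T_k(u)$ has no useful vector analogue. The paper bypasses truncation entirely. For the inequality \refe{MTuu}. it expands $\int_\Om|Du_\ep-Du-t\,D(W_\ep^k\ph)|^2$ for smooth $\ph$, obtains a discriminant inequality, substitutes approximations of $\dfrac{1_\om\,u}{1+\de|u|}$ (legitimate since $M^k,\hat M^k\in L^s_{\rm loc}$ for $s<2$), and then lets $k\to\infty$ and $\de\to 0$ via Fatou and dominated convergence. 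For the equality \refe{MTuu=}. and the corrector \refe{corueWe}. it uses the density of smooth divergence-free functions in the divergence-free part of $H^1_0(\Om)^2$ --- this is precisely where the Lipschitz regularity of $\Om$ enters --- together with $M\in L^m$, $m>1$, which makes $\ph\mapsto\int_\Om Mu\cdot\ph$ continuous on $H^1_0(\Om)^2$.
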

\begin{Rem}
Contrary to Theorem~\ref{thm1}, in the part $ii)$ of Theorem~\ref{thm2} we need to assume a higher integrability for the matrix-valued ${M}$. Indeed, we cannot apply a truncation principle on ${M}u\cdot u$. Moreover, the regularity of $\Om$ is necessary to obtain the density of the smooth divergence free functions in the space of the divergence free functions of $H^1_0(\Om)^2$.
\end{Rem}
\noindent
{\bf Proof of Theorem~\ref{thm2}.}
As in the proof of Theorem~\ref{thm2} the sequence $u_\ep$ is bounded in $H^1_0(\Om)^2$, and thus in any $L^s(\Om)^2$ space. Then, in view of \refe{Duve}. and \refe{pSe}. together with the boundedness of $u_\ep$ and $v_\ep$ the sequence $\nabla p_\ep$ is bounded in $L^1(\Om)^2+W^{-1,r}(\Om)^2$ for any $r\in(1,2)$. Hence, thanks to the embedding of $L^1_{\rm loc}(\Om)$ into $W^{-\si,r}_{\rm loc}(\Om)$ for any $r>1$ and $\si>2/r'$, 
the sequence $p_\ep$ is bounded in $L^r_{\rm loc}(\Om)/\RR$ for any $r\in(1,2)$. Therefore, up to a subsequence we have the convergences
\beq\label{conueper}
\left\{\ba{rlll}
u_\ep & \confai u & \mbox{weakly in }H^1_0(\Om)^2 &
\\ \ecart
p_\ep & \confai p & \mbox{weakly in }L^r_{\rm loc}(\Om)/\RR, & \mbox{for any }r\in(1,2).
\ea\right.
\eeq
The problem is to determine the vector-valued distribution $\nu$ defined by
\beq\label{nuS1}
\curl\left(v_\ep\right)Ju_\ep-\curl\left(v\right)Ju\confai \nu\quad\mbox{in }\D'(\Om)^2.
\eeq
Taking into account the representation formula \refe{Duve}. and the equi-integrability of $v_\ep$ in $L^2(\Om)^2$, $\nu$ is actually in $L^1(\Om)^2$, and is given by
\beq\label{nuS2}
(Du_\ep)^T(v_\ep-v)\confai \nu\quad\mbox{weakly in }L^1(\Om)^2,
\eeq
so that $u$ is solution of the equation
\beq\label{eqSnu}
-\,\De u+\nu+\curl\left(v\right)Ju+\nabla p=f\quad\mbox{in }\D'(\Om).
\eeq
\par
From now on the proof follows the same scheme as the one of Theorem~\ref{thm1} using a representation of the velocity and the pressure owing to the parametrix $P$ of \refe{P}..
The proof is divided into fifth steps. The first step deals with a double parametrix method for both $u_\ep$ and $p_\ep$, which allows us to derive a strong approximation of $Du_\ep$. In the second step we compute the limit $\si^k$ of the sequence $(Du_\ep)^T v_\ep^k$, where $v_\ep^k$ is a truncation of $v_\ep$ for a fixed $k>0$. In the third step we obtain the limit equation \refe{eqBT}.. In the fourth step we prove inequality \refe{MTuu}.. The fifth step is devoted to the proof of equality \refe{MTuu}. and the corrector result \refe{corueWe}..
\par\bs\noindent
{\it First step:} The double parametrix method.
\par\ms\noindent
Consider the parametrix $P$ \refe{P}. for the Laplace operator. Abusively we denote by $\De$ the vector-valued Laplace operator as well as by $P$ the associated vector-valued parametrix each component of which is defined by \refe{P}..
Taking the divergence of equation \refe{pSe}. we have
\[
\De p_\ep=\div\left(f\right)-\div\left(\curl\left(v_\ep\right)Ju_\ep\right)\quad\mbox{in }\Om,
\]
hence by \refe{DP}.
\beq\label{Ppe}
p_\ep=P\big[\div\left(f\right)-\div\left(\curl\left(v_\ep\right)Ju_\ep\right)\big]+Kp_\ep\quad\mbox{in }\Om.
\eeq
Substituting $p_\ep$ by the right-hand side of \refe{Ppe}. in \refe{pSe}. it follows that
\[
\De u_\ep=\curl\left(v_\ep\right)Ju_\ep-\nabla P\big(\div\left(\curl\left(v_\ep\right)Ju_\ep\right)\big)+\nabla P\big(\div\left(f\right)\big)-f+\nabla Kp_\ep
\quad\mbox{in }\Om,
\]
hence again by \refe{DP}. we have in $\Om$
\beq\label{ue21}
u_\ep=P\left[\curl\left(v_\ep\right)Ju_\ep-\nabla P\left(\div\big(\curl\left(v_\ep\right)Ju_\ep\big)\right)\right]+P\big[\nabla P\big(\div\left(f\right)\big)-f\big]+L(u_\ep,p_\ep),
\eeq
where $L$ is a $C^\infty$-kernel operator acting on the pair $(u_\ep,p_\ep)$. Using the representation \refe{Duve}. of $\curl\left(v_\ep\right)Ju_\ep$, and setting
\beq\label{ge}
g_\ep:=\Div\big((v_\ep-v)\otimes u_\ep\big)-\nabla\big((v_\ep-v)\cdot u_\ep\big),
\eeq
we get
\beq\label{ue22}
u_\ep=P\left[(Du_\ep)^Tv_\ep+g_\ep-\nabla P\left(\div\big((Du_\ep)^Tv_\ep+g_\ep\big)\right)\right]+F(u_\ep,p_\ep),
\eeq
where
\beq\label{F}
\ba{ll}
F(\zeta,\th):= & P\left[\Div(v\otimes \zeta)-\nabla(v\cdot\zeta)-f-\nabla P\left(\div\big(\Div(v\otimes\zeta)-\nabla(v\cdot\zeta)-f\big)\right)\right]
\\ \ecart
& +\,L(\zeta,\th).
\ea
\eeq
Note that by \refe{regP}. we have
\[
F(u_\ep,p_\ep)\longrightarrow F(u,p)\quad\mbox{strongly in }W^{1,r}_{\rm loc}(\Om),\quad\mbox{for any }r\in(1,2).
\]
Moreover, by \refe{nuS2}. the sequence $(Du_\ep)^Tv_\ep$ weakly converges to $\nu+(Du)^Tv$ in $L^1(\Om)^2$ which is compactly embedded in $W^{-1,r}_{\rm loc}(\Om)^2$ for any $r\in(1,2)$. Hence, as in the first step of the proof of Theorem~\ref{thm1}, from \refe{ue22}. and the two previous convergences we deduce, for any $r\in(1,2)$, the strong convergence
\beq\label{ue23}
\ba{r}
u_\ep-P\left[g_\ep-\nabla P\big(\div\left(g_\ep\right)\big)\right]\longrightarrow
P\left[\nu+(Du)^Tv-\nabla P\left(\div\big(\nu+(Du)^Tv\big)\right)\right]+F(u,p)
\\ \ecart
\mbox{strongly in }W^{1,r}_{\rm loc}(\Om)^2.
\ea
\eeq
\par\bs\noindent
{\it Second step:} Determination of the limit $\si^k$ of $(Du_\ep)^T v_\ep^k$.
\par\ms\noindent
Fix $r\in(1,2)$ such that \refe{ue23}. holds. Set
\beq\label{zeqe}
z_\ep:=P\left[g_\ep-\nabla P\big(\div\left(g_\ep\right)\big)\right]\quad\mbox{and}\quad q_\ep:=P\big(\div\left(g_\ep\right)\big).
\eeq
In view of \refe{ge}. the sequence $g_\ep$ weakly converges to $0$ in $W^{-1,r}(\Om)^2$, hence by \refe{regP}. we have
\beq\label{conzeqe}
z_\ep\confai 0\;\;\mbox{weakly in }W^{1,r}_{\rm loc}(\Om)^2\quad\mbox{and}\quad q_\ep\confai 0\;\;\mbox{weakly in }L^r_{\rm loc}(\Om)/\RR.
\eeq
Moreover, by \refe{DP}. we have
\beq\label{eqzeqe}
\De z_\ep=g_\ep-\nabla q_\ep-K'g_\ep\quad\mbox{and}\quad\De q_\ep=\div\left(g_\ep\right)-K'q_\ep\quad\mbox{in }\Om,
\eeq
hence
\[
\De\big(\div\left(z_\ep\right)\big)=K'q_\ep-\div\left(K'g_\ep\right)\longrightarrow 0\quad\mbox{strongly in }L^r_{\rm loc}(\Om)^2,\mbox{ say}.
\]
This combined with the first convergence of \refe{conzeqe}. and \refe{regP}. yields
\beq\label{divze}
\div\left(z_\ep\right)\longrightarrow 0\quad\mbox{strongly in }W^{2,r}_{\rm loc}(\Om)^2.
\eeq
On the other hand, set $v_\ep^k:=v_\ep\,1_{\{|v_\ep|<k\}}$, for a positive integer $k$. Up to a subsequence of $\ep$ still denoted by $\ep$, $v_\ep^k$ weakly converges to some function $v^k$ in $L^2(\Om)^2$ for any $k$. Consider for $\la\in\RR^2$, the solutions $w_\ep^{\la,k}$ and $q_\ep^{\la,k}$ of the Stokes problem
\beq\label{Sewelk}
\left\{\ba{rll}
-\,\De w_\ep^{\la,k}+\Div\left((v_\ep^k-v^k)\otimes\la\right)+\nabla q_\ep^{\la,k} & =0 & \mbox{in }\Om
\\ \ecart
\div\left(w_\ep^{\la,k}\right) & =0 & \mbox{in }\Om
\\ \ecart
w_\ep^{\la,k} & =0 & \mbox{on }\partial\Om.
\ea\right.
\eeq
which consists in a approximation of equation \refe{Sewel}.. By the regularity results for the Stokes equation (see, e.g., \cite{Lad}) we have
\beq\label{cwelk}
\left\{\ba{rll}
w_\ep^{\la,k} & \confai 0 & \mbox{weakly in }W^{1,s}_{\rm loc}(\Om)^2
\\ \ecart
q_\ep^{\la,k} & \confai 0 & \mbox{weakly in }L^s_{\rm loc}(\Om)/\RR,
\ea\right.
\quad\mbox{for any }s\in(1,\infty).
\eeq
Choose $s:=r'$, and apply the Tartar method (see Appendix of \cite{San}). Let $\ph\in C^\infty_c(\Om)$. Putting $\ph\,w_\ep^{\la,k}$ in the first equation of \refe{eqzeqe}. and $\ph\,z_\ep$ in equation \refe{Sewelk}., and using the definition \refe{ge}. of $g_\ep$ and the convergences \refe{conzeqe}., \refe{divze}., \refe{cwelk}. we have
\[
(Dz_\ep)^T(v_\ep^k-v^k)\cdot\la-\big(Dw_\ep^{\la,k}\big)^T(v_\ep-v)\cdot u_\ep\confai 0\quad\mbox{weakly in }\D'(\Om)^2.
\]
Hence, since $Dz_\ep$ weakly converges to $0$ in $L^r(\Om)^{2\times 2}$, we deduce that
\beq\label{DzeDwelk}
(Dz_\ep)^T v_\ep^k\confai M^{k}u\quad\mbox{weakly in }\D'(\Om)^2,
\eeq
where the matrix-valued function $M^{k}$ is defined by
\beq\label{MTk}
\big(Dw_\ep^{\la,k}\big)^T v_\ep\confai M^{k}\,\la\quad\mbox{weakly in }L^s_{\rm loc}(\Om)^2,\quad\mbox{for any }s\in[1,2).
\eeq
Now, we are able to determine the limit $\si^k$ of the sequence $(Du_\ep)^T v_\ep^k$ in $L^2(\Om)^2$. With the definition \refe{zeqe}. of $z_\ep$ the strong convergence \refe{ue23}. implies that
\[
\ba{r}
(Du_\ep)^T v_\ep^k-(Dz_\ep)^T v_\ep^k\confai\left(DP\left[\nu+(Du)^Tv-\nabla P\left(\div\big(\nu+(Du)^Tv\big)\right)\right]\right)^T v^k+\big(DF(u,p)\big)^T v^k
\\ \ecart
\quad\mbox{weakly in }L^r_{\rm loc}(\Om)^2.
\ea
\]
This combined with \refe{DzeDwelk}. thus yields
\beq\label{sikS}
\si^k=M^{k}u+\left(DP\left[\nu+(Du)^Tv-\nabla P\left(\div\big(\nu+(Du)^Tv\big)\right)\right]\right)^T v^k+\big(DF(u,p)\big)^T v^k.
\eeq
\par\ms\noindent
{\it Third step:} Determination of the limit equation \refe{eqBT}..
\par\ms\noindent
The function  $u$ solves the equation \refe{eqSnu}. which by \refe{Duv}. and similarly to \refe{ue22}., can read as
\[
u=P\left[\nu+(Du)^Tv-\nabla P\left(\div\big(\nu+(Du)^Tv\big)\right)\right]+F(u,p).
\]
This implies that
\[
(Du)^T v^k=\left(DP\left[\nu+(Du)^Tv-\nabla P\left(\div\big(\nu+(Du)^Tv\big)\right)\right]\right)^T v_k+\big(DF(u,p)\big)^T v^k.
\]
Therefore, equating the previous equation with \refe{sikS}. yields
\beq\label{sikMTk}
\si^k=(Du)^T v^k+M^{k}u\quad\mbox{in }\Om.
\eeq
\par
It remains to pass to the limit as $k$ tends to infinity. Due to the equi-integrability of $v_\ep$ in $L^2(\Om)^2$ and by convergence \refe{nuS2}. the sequence $\si^k$ strongly converges to $\nu+(Du)^T v$ in $L^1(\Om)$.
On the other hand, putting the function $w_\ep^{\la,k}-w_\ep^\la$ both in equations \refe{Sewel}. and \refe{Sewelk}. we get the equality
\[
\int_\Om\big|Dw_\ep^{\la,k}-Dw_\ep^\la\big|^2\,dx=\int_\Om\big(Dw_\ep^{\la,k}-Dw_\ep^\la\big)^T(v_\ep^k-v_\ep-v^k+v)\cdot\la\,dx,
\]
which, again by the equi-integrability of $v_\ep$, yields
\beq\label{estDwelk}
\lim_{k\to\infty}\;\sup_{\ep>0}\left(\int_\Om\big|Dw_\ep^{\la,k}-Dw_\ep^\la\big|^2\,dx\right)=0.
\eeq
Estimate \refe{estDwelk}. implies that the sequence $M^{k}$ defined by \refe{MTk}. strongly converges in $L^1(\Om)^{2\times 2}$ to the matrix-valued function ${M}$ defined by \refe{MT}.. In particular, up to a subsequence $M^{k}$ converges to ${M}$ a.e. in $\Om$. Then, by the Fatou lemma combined with \refe{sikMTk}. and the strong convergences of $\si^k$ in $L^1(\Om)^2$ and $v_k$ in $L^2(\Om)^2$, we get that the function ${M}u$ belongs to $L^1(\Om)^2$. Finally, passing to the limit in \refe{sikMTk}. we obtain the equality
\[
\nu={M}u\quad\mbox{in }\Om,
\]
which gives the limit equation \refe{eqBT}..
\par\bs\noindent
{\it Fourth step:} Proof of inequality \refe{MTuu}..
\par\ms\noindent
Similarly to \refe{We}. let $W_\ep^k$, $k>0$, be the matrix-valued function defined by $W_\ep^k\la:=w_\ep^{\la,k}$, where $w_\ep^{\la,k}$ solves \refe{Sewelk}.. We simply denote $w_\ep^{i,k}$ when $\la=e_i:=(2-i,i-1)$, for $i=1,2$. Let $\ph\in C^1_c(\Om)^2$, and let $t\in\RR$.
Using \refe{estDue}. we have
\beq\label{DueDWek}
\ba{l}
\dis\int_\Om\left|Du_\ep-Du-t\,D(W_\ep^k \ph)\right|^2 dx=\langle f,u\rangle_{H^{-1}(\Om)^2,H^1_0(\Om)^2}-\int_\Om|Du|^2\,dx
\\ \ecart
\dis -\,2\,t\int_\Om Du_\ep:D(W_\ep^k \ph)\,dx+t^2\int_\Om\left|D(W_\ep^k\ph)\right|^2 dx+o(1).
\ea
\eeq
Moreover, similarly to the second convergences of \refe{MTr}. and \refe{Dueve}., we have for $i,j=1,2$,
\beq\label{MTkMk}
\left\{\ba{rlll}
Du_\ep:Dw_\ep^{i,k} & \confai M^{k}u\cdot e_i
\\ \ecart
\dis Dw_\ep^{i,k}\cdot Dw_\ep^{j,k} & \confai \hat{M}^k e_i\cdot e_j,
\ea\right.
\quad\mbox{weakly in }L^s_{\rm loc}(\Om),\quad\mbox{for any }s\in[1,2),
\eeq
where (compare to the definition \refe{MTk}. of $M^{k}$) the matrix-valued $\hat{M}^k$ is defined by
\beq\label{Mk}
\big(Dw_\ep^{i,k}\big)^T v_\ep^k\confai \hat{M}^k\,e_i\quad\mbox{weakly in }L^2(\Om)^2.
\eeq
Then, from convergences \refe{cwelk}. and \refe{MTkMk}. we deduce that
\[
\ba{rll}
\dis \int_\Om Du_\ep:D(W_\ep^k\ph)\,dx & \dis =\sum_{i=1}^2\int_\Om Du_\ep:Dw_\ep^{i,k}\,\ph_i\,dx+o(1) & \dis \limo\int_\Om M^{k}u\cdot\ph\,dx,
\\ \ecart
\dis \int_\Om\left|D(W_\ep^k\ph)\right|^2 dx & \dis =\sum_{i,j=1}^2\int_\Om Dw_\ep^{i,k}:Dw_\ep^{j,k}\,\ph_i\,\ph_j\,dx+o(1) & \dis \limo\int_\Om \hat{M}^k\ph\cdot\ph\,dx,
\ea
\]
This combined with \refe{DueDWek}. implies that
\beq\label{DueDWek2}
\ba{l}
\dis\int_\Om\left|Du_\ep-Du-t\,D(W_\ep^k \ph)\right|^2 dx=\langle f,u\rangle_{H^{-1}(\Om)^2,H^1_0(\Om)^2}-\int_\Om|Du|^2\,dx
\\ \ecart
\dis -\,2\,t\int_\Om M^{k}u\cdot\ph\,dx+t^2\int_\Om \hat{M}^k\ph\cdot\ph\,dx+o(1).
\ea
\eeq
Therefore, we have for any $t\in\RR$,
\[
t^2\int_\Om \hat{M}^k\ph\cdot\ph\,dx-2\,t\int_\Om M^{k}u\cdot\ph\,dx+\langle f,u\rangle_{H^{-1}(\Om)^2,H^1_0(\Om)^2}-\int_\Om|Du|^2\,dx\geq 0,
\]
hence
\beq\label{MTkup1}
\left(\int_\Om M^{k}u\cdot\ph\,dx\right)^2\leq\left(\langle f,u\rangle_{H^{-1}(\Om)^2,H^1_0(\Om)^2}-\int_\Om|Du|^2\,dx\right)\int_\Om \hat{M}^k\ph\cdot\ph\,dx.
\eeq
Let $\de>0$, and let $\om$ be an open set such that $\om\Subset\Om$. Since by \refe{MTk}. and \refe{Mk}. $M^{k}$ and $\hat{M}^k$ belong to $L^s(\om)^{2\times 2}$ for $s\in[1,2)$, putting in \refe{MTkup1}. strong approximations $\ph$ of ${1_\om\,u\over 1+\de\,|u|}$ in $L^{2s'}(\Om)^2$, we get
\[
\ba{ll}
\dis \left(\int_\om{M^{k}u\cdot u\over 1+\de\,|u|}\,dx\right)^2 &
\dis \leq\left(\langle f,u\rangle_{H^{-1}(\Om)^2,H^1_0(\Om)^2}-\int_\Om|Du|^2\,dx\right)\int_\om{\hat{M}^{k}u\cdot u\over(1+\de\,|u|)^2}\,dx
\\ \ecart
& \dis \leq\left(\langle f,u\rangle_{H^{-1}(\Om)^2,H^1_0(\Om)^2}-\int_\Om|Du|^2\,dx\right)\int_\Om{\hat{M}^{k}u\cdot u\over(1+\de\,|u|)^2}\,dx,
\ea
\] 
which by the arbitrariness of $\om$ yields the inequality
\beq\label{MTkup2}
\left(\int_\Om{M^{k}u\cdot u\over 1+\de\,|u|}\,dx\right)^2
\leq\left(\langle f,u\rangle_{H^{-1}(\Om)^2,H^1_0(\Om)^2}-\int_\Om|Du|^2\,dx\right)\int_\Om{\hat{M}^{k}u\cdot u\over(1+\de\,|u|)^2}\,dx.
\eeq
Recall that, by virtue of the equi-integrability of $v_\ep$ in $L^2(\Om)^2$, the sequences $M^{k}$ and $\hat{M}^k$ strongly converge to ${M}$ in $L^1(\Om)^{2\times 2}$, thus converge, up to a subsequence of $k$, a.e. in $\Om$ and in a dominated way. Therefore, passing to the limit as $k\to\infty$ owing to the Fatou lemma for the left-hand side of \refe{MTkup2}. and owing to the Lebesgue dominated convergence theorem for the right-hand side of \refe{MTkup2}., it follows that
\[
\left(\int_\Om{{M}u\cdot u\over 1+\de\,|u|}\,dx\right)^2
\leq\left(\langle f,u\rangle_{H^{-1}(\Om)^2,H^1_0(\Om)^2}-\int_\Om|Du|^2\,dx\right)\int_\Om{{M}u\cdot u\over(1+\de\,|u|)^2}\,dx<\infty,
\]
which implies the inequality
\beq\label{MTkup3}
\int_\Om{{M}u\cdot u\over 1+\de\,|u|}\,dx\leq\langle f,u\rangle_{H^{-1}(\Om)^2,H^1_0(\Om)^2}-\int_\Om|Du|^2\,dx.
\eeq
Finally, applying the Fatou lemma in \refe{MTkup3}. as $\de\to 0$  we obtain the desired inequality \refe{MTuu}..
\par\bs\noindent
{\it Fifth step:} Proof of equality \refe{MTuu=}. and of the corrector result \refe{corueWe}..
\par\ms\noindent
Assume that $\Om$ has a Lipschitz boundary, $v\in L^r(\Om)^N$, with $r>2$, and ${M}\in L^m(\Om)^{2\times 2}$, with $m>1$.
Let $\ph$ be a divergence free function in $C^\infty_c(\Om)^2$. Putting $\ph$ as test function in the limit Stokes equation \refe{eqBT}. and using the representation formula \refe{Duv}. we have
\beq\label{vfeqBT}
\ba{l}
\dis \int_\Om Du:D\ph\,dx+\int_\Om(Du)^T v\cdot\ph\,dx-\int_\Om(v\otimes u):D\ph\,dx+\int_\Om {M}u\cdot\ph\,dx
\\ \ecart
\dis =\langle f,u\rangle_{H^{-1}(\Om)^2,H^1_0(\Om)^2}.
\ea
\eeq
Due to the regularity of $\Om$ the set of divergence free functions is known to be dense in the space of divergence free functions in $H^1_0(\Om)^2$ (see, e.g., \cite{Tar5}).
Moreover, by the higher integrability of $v$ and ${M}$ the mapping
\[
\ph\longmapsto\int_\Om(Du)^T v\cdot\ph\,dx-\int_\Om(v\otimes u):D\ph\,dx+\int_\Om {M}u\cdot\ph\,dx
\]
is continuous in $H^1_0(\Om)^2$.
Therefore, considering in \refe{vfeqBT}. a divergence free strong approximation $\ph$ of $u$ in $H^1_0(\Om)^2$ we get
\[
\int_\Om|Du|^2\,dx+\int_\Om(Du)^T v\cdot u\,dx-\int_\Om(v\otimes u):Du\,dx+\int_\Om {M}u\cdot u\,dx=\langle f,u\rangle_{H^{-1}(\Om)^2,H^1_0(\Om)^2}\,,
\]
which is \refe{MTuu=}.. This equality clearly implies the uniqueness of a solution $u\in H^1_0(\Om)^2$ of \refe{eqBT}., with ${M}u\cdot u\in L^1(\Om)$.
\par
It remains to prove the corrector result \refe{corueWe}.. Let $\ph\in C^\infty_c(\Om)$. Applying successively the triangle inequality and the Cauchy-Schwarz inequality we have
\[
\ba{l}
\dis \int_\Om\big|Du_\ep-Du-D(W_\ep\,u)\big|\,dx
\\ \ecart
\dis \leq\int_\Om\big|Du_\ep-Du-D(W_\ep\,\ph)\big|\,dx+\int_\Om\left|D\big(W_\ep\,(u-\ph)\big)\right|dx
\\ \ecart
\dis \leq\int_\Om\big|Du_\ep-Du-D(W_\ep\,\ph)\big|\,dx+\int_\Om|DW_\ep|\,|u-\ph|\,dx+\int_\Om|W_\ep|\,|Du-D\ph|\,dx
\\ \ecart
\dis \leq\int_\Om\big|Du_\ep-Du-D(W_\ep\,\ph)\big|\,dx+c\left\|W_\ep\right\|_{H^1(\Om)^{2\times 2}}\,\|u-\ph\|_{H^1_0(\Om)^2}\,,
\ea
\]
hence by the boundedness of $W_\ep$ in $H^1_0(\Om)^2$,
\beq\label{DueDWe}
\int_\Om\big|Du_\ep-Du-D(W_\ep\,u)\big|\,dx\leq\int_\Om\big|Du_\ep-Du-D(W_\ep\,\ph)\big|\,dx+c\,\|u-\ph\|_{H^1_0(\Om)^2}.
\eeq
On the other hand, proceeding as in fourth step owing to the second convergences of \refe{MT}. and~\refe{Dueve}. (which hold in the weak-$*$ sense of measures on $\Om$) we get similarly to \refe{DueDWek2}. the equality
\[
\ba{l}
\dis\int_\Om\big|Du_\ep-Du-D(W_\ep\,\ph)\big|^2\,dx=\langle f,u\rangle_{H^{-1}(\Om)^2,H^1_0(\Om)^2}-\int_\Om|Du|^2\,dx
\\ \ecart
\dis -\,2\int_\Om {M}u\cdot\ph\,dx+\int_\Om {M}\,\ph\cdot\ph\,dx+o(1).
\ea
\]
Hence, taking into account equality \refe{MTuu=}. and using the H\"older inequality combined with the embedding of $H^1_0(\Om)$ in any $L^s(\Om)$ space, it follows that
\beq\label{DueDWeph}
\ba{ll}
\dis \int_\Om\big|Du_\ep-Du-D(W_\ep\,\ph)\big|^2\,dx & \dis =\int_\Om {M}\,(u-\ph)\cdot(u-\ph)\,dx+o(1)
\\ \ecart
& \dis \leq c\,\|{M}\|_{L^m(\Om)^{2\times 2}}\,\|u-\ph\|^2_{H^1_0(\Om)^2}+o(1).
\ea
\eeq
Therefore, by \refe{DueDWe}. and \refe{DueDWeph}. we obtain the inequality
\beq\label{corueWe2}
\limsup_{\ep\to 0}\int_\Om\big|Du_\ep-Du-D(W_\ep\,u)\big|\,dx\leq c\,\|u-\ph\|_{H^1_0(\Om)^2}\,,\quad\mbox{for any }\ph\in C^\infty_c(\Om)^2,
\eeq
which implies the desired convergence \refe{corueWe}. and concludes the proof of Theorem~\ref{thm2}. \cqfd
\par\bs
As in the scalar case we show in the next section that the equi-integrability condition is crucial to derive the limit Brinkman equation \refe{eqBT}. with the matrix-valued function ${M}$ introduced by L.~Tartar \cite{Tar2,Tar4}.
\subsection{A counter-example}\label{s.cexS}
Let $\Om$ be a regular bounded domain of $\RR^2$. For $\ep>0$, let $\om_\ep$ be the intersection of $\Om$ with the periodic lattice of disks of center $2\ep\,\ka$, $\ka\in\ZZ^2$, and of radius $\ep\,r_\ep$ such that
\beq\label{ga}
{4\pi\over\ep^2\,|\ln r_\ep|}\limo\ga\in(0,\infty).
\eeq
This geometry was used by Cioranescu, Murat~\cite{CiMu} for the Laplace equation and by Allaire \cite{All} for the Stokes equation, in order to derive a ``strange term" of zero-order from the homogenization of the Dirichlet boundary conditions on the small disks.
\par
In the square $Y:=(-1,1)^2$, let $Q$ be the disk centered at the origin and of radius $1$, and let $Q_{r_\ep}$ be the disk of same center and of radius $r_\ep$ with measure $|Q_{r_\ep}|=\pi\,r_\ep^2$. Then, for $f\in H^{-1}(\Om)^2$, we consider the Stokes equation
\beq\label{pSeoe}
\left\{\ba{rll}
\dis -\,\De u_\ep+{1_{\om_\ep}\over|Q_{r_\ep}|}\,Ju_\ep+\nabla p_\ep & =f & \mbox{in }\Om
\\
\div\left(u_\ep\right) & =0 &  \mbox{in }\Om
\\ \ecart
u_\ep & =0 &  \mbox{on }\partial\Om.
\ea\right.
\eeq
Note that, in view of the definition of $\om_\ep$, we have $|\om_\ep|\approx|\Om|\,|Q_{r_\ep}|$. Moreover, if $z_\ep\in H^1_0(\Om)$ is the solution of the Laplace equation
\beq\label{ze}
\De z_\ep={1_{\om_\ep}\over|Q_{r_\ep}|}\quad\mbox{in }\D'(\Om),
\eeq
we have
\beq\label{vecex}
{1_{\om_\ep}\over|Q_{r_\ep}|}=\curl\left(v_\ep\right)\;\;\mbox{in }\D'(\Om),\quad\mbox{where}\quad v_\ep:=J\nabla z_\ep.
\eeq
Hence, the Stokes problem \refe{pSeoe}. is of the same type as \refe{pSe}..
On the other hand, using successively the Cauchy-Schwarz inequality and the estimate \refe{eoe}. below combined with \refe{ga}. we have
\[
\int_\Om|\nabla z_\ep|^2\,dx=-\,{|\om_\ep|\over|Q_{r_\ep}|}\,\moy_{\om_\ep}z_\ep\,dx\leq{|\om_\ep|\over|Q_{r_\ep}|}\left(\moy_{\om_\ep}z_\ep^2\,dx\right)^{1\over 2}
\leq c\,\|\nabla z_\ep\|_{L^2(\Om)^2},
\]
which implies that $z_\ep$ is bounded in $H^1_0(\Om)$. Therefore, the sequence $v_\ep$ is bounded in $L^2(\Om)^2$.
Moreover, since by periodicity the sequence ${1_{\om_\ep}\over|Q_{r_\ep}|}$ converges weakly-$*$ to ${1\over 4}$ in $\M(\Om)$, we get
\beq\label{convecex}
v_\ep\confai v\;\;\mbox{weakly in }L^2(\Om)^2,\quad\mbox{with}\quad\curl\left(v\right)={1\over 4}\;\;\mbox{in }\D'(\Om).
\eeq
On the other hand, it is not difficult to check that $v_\ep$ is not equi-integrable in $L^2(\Om)^2$. In fact, the following result shows that \refT{thm2}. does not hold for this particular sequence $v_\ep$:
\begin{Thm}\label{thm.cexS}
The sequence $u_\ep$ weakly converges in $H^1_0(\Om)^2$ to the solution $u$ of the Brinkman equation
\beq\label{Be}
\left\{\ba{rll}
\dis -\,\De u+{1\over 4}\,Ju+\nabla p+\Ga{u} & =f & \mbox{in }\Om
\\ \ecart
\div\left(u\right) & =0 &  \mbox{in }\Om
\\ \ecart
u & =0 &  \mbox{on }\partial\Om,
\ea\right.
\eeq
where the extra zero-order term $\Ga{u}$ is given by
\beq\label{Gau}
\left({1_{\om_\ep}\over|Q_{r_\ep}|}-{1\over 4}\right)Ju_\ep\confai \Ga{u}\quad\mbox{weakly-$*$ in }\M(\Om)^2,
\eeq
and $\Ga$ is the constant matrix defined by
\beq\label{Ga}
\Ga:={1\over 4\left(\ga^2+1\right)}\left(\ga\,I-J\right).
\eeq
Moreover, the matrix obtained from convergence \refe{MT}. according to the Tartar approach is given by
\beq\label{MTcex}
{M}={1\over 4\,\ga}\,I.
\eeq
\end{Thm}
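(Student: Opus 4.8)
The plan is to combine Tartar's oscillating test function method with a Cioranescu--Murat/Allaire type construction of correctors adapted to the drift term, exploiting that all the local cell problems are radial, hence explicitly solvable.

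\textit{A priori estimates and set-up.} Since $Ju_\ep\cdot u_\ep=0$, testing \refe{pSeoe}. with $u_\ep$ gives $\int_\Om|Du_\ep|^2\,dx=\langle f,u_\ep\rangle_{H^{-1}(\Om)^2,H^1_0(\Om)^2}$, so $u_\ep$ is bounded in $H^1_0(\Om)^2$, hence in every $L^s(\Om)^2$, $s<\infty$. By \refe{vecex}. the drift term equals $\curl\left(v_\ep\right)Ju_\ep$, which by \refe{Duve}. is bounded in $L^1(\Om)^2$; arguing as in the proof of \refT{thm2}. one gets a pressure $p_\ep$ bounded in $L^r_{\rm loc}(\Om)/\RR$ for every $r\in(1,2)$. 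Along a subsequence $u_\ep\confai u$ in $H^1_0(\Om)^2$, $p_\ep\confai p$ in $L^r_{\rm loc}(\Om)/\RR$, and, since ${1_{\om_\ep}/|Q_{r_\ep}|}\confai{1\over4}$ weakly-$*$ in $\M(\Om)$ while $u_\ep\to u$ strongly in $L^s(\Om)^2$, one obtains ${1_{\om_\ep}\over|Q_{r_\ep}|}Ju_\ep\confai{1\over4}Ju+\nu$ weakly-$*$ in $\M(\Om)^2$, so that $u$ solves $-\De u+{1\over4}Ju+\nabla p+\nu=f$. Everything reduces to proving $\nu=\Ga u$.

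\textit{Correctors.} For $\la\in\RR^2$ I would construct, cell by cell on the $2\ep\ZZ^2$-lattice (with a boundary-layer correction near $\partial\Om$), divergence free functions $W_\ep^\la\in H^1(\Om)^2$ and pressures $Q_\ep^\la\in L^2_{\rm loc}(\Om)/\RR$ satisfying
\[
-\De W_\ep^\la+{1_{\om_\ep}\over|Q_{r_\ep}|}\,JW_\ep^\la+\nabla Q_\ep^\la=0,\qquad\div\left(W_\ep^\la\right)=0,\qquad W_\ep^\la=\la\ \mbox{ on the cell boundaries}.
\]
Because each disk of radius $\ep r_\ep$ is radial, this amounts to a radial ODE system with transmission conditions, solvable explicitly just as $Z_\ep$ was in the scalar counter-example. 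Using the normalization \refe{ga}. I expect $W_\ep^\la\confai\la$ weakly in $H^1(\Om)^2$, $Q_\ep^\la\confai{\rm const}$ weakly in $L^2_{\rm loc}(\Om)/\RR$, the sequence ${1_{\om_\ep}\over|Q_{r_\ep}|}JW_\ep^\la$ relatively compact in $H^{-1}_{\rm loc}(\Om)^2$, and
\[
{1_{\om_\ep}\over|Q_{r_\ep}|}\,JW_\ep^\la\confai\Ga\la\quad\mbox{weakly-$*$ in }\M(\Om)^2,\qquad\Ga={1\over4(\ga^2+1)}(\ga I-J)={1\over4}(\ga I+J)^{-1}.
\]
The non-symmetric $\Ga$ appears precisely because the cell problem sees $JW_\ep^\la$ rather than $W_\ep^\la$: the rotation turns the scalar Cioranescu--Murat self-consistency between the effective reaction force of a disk and the effective value of the corrector near it into a $2\times2$ linear system whose inversion brings in $(\ga I+J)^{-1}$.

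\textit{Identification of the limit equation.} For $\ph\in C^\infty_c(\Om)$ I would put $\ph\,W_\ep^\la$ as test function in \refe{pSeoe}. and $\ph\,u_\ep$ in the corrector equation, and subtract. The symmetric terms $\int_\Om\ph\,Du_\ep:DW_\ep^\la\,dx$ cancel; the remaining first-order terms and the pressure terms converge because one factor converges strongly in $L^2_{\rm loc}(\Om)$ (namely $u_\ep\to u$, $W_\ep^\la\to\la$, and $p_\ep\confai p$, $Q_\ep^\la\confai{\rm const}$); and, thanks to $(Ja)\cdot b=-\,a\cdot Jb$, the two drift contributions add up to $-\,2\int_\Om{1_{\om_\ep}\over|Q_{r_\ep}|}JW_\ep^\la\cdot\ph\,u_\ep\,dx$. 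Passing to the limit in this product — here the $H^{-1}_{\rm loc}(\Om)^2$-compactness of ${1_{\om_\ep}\over|Q_{r_\ep}|}JW_\ep^\la$ is used together with $u_\ep\confai u$ in $H^1_0(\Om)^2$ — and comparing with the identity obtained by testing the limit equation against $\ph\,\la$, one is led to $\nu=\Ga u$ in $\D'(\Om)^2$. This yields the Brinkman equation \refe{Be}. together with \refe{Gau}.--\refe{Ga}..

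\textit{The Tartar matrix and conclusion.} To get \refe{MTcex}. I would solve \refe{Sewel}. with $v_\ep=J\nabla z_\ep$. Writing $\zeta_\ep:=z_\ep-z$, where $z$ is the weak $H^1_0(\Om)$-limit of $z_\ep$ (so $\De z={1\over4}$), one has $v_\ep-v=J\nabla\zeta_\ep$, hence $\Div\big((v_\ep-v)\otimes\la\big)=J\nabla(\la\cdot\nabla\zeta_\ep)$, and therefore, up to negligible boundary layers, $w_\ep^\la=J\nabla\psi_\ep^\la$ with $\De\psi_\ep^\la=\la\cdot\nabla\zeta_\ep$ and $q_\ep^\la$ a harmonic pressure. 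Then $Dw_\ep^\la:Dw_\ep^\mu=D^2\psi_\ep^\la:D^2\psi_\ep^\mu$ has the same weak-$*$ limit as $(\la\cdot\nabla\zeta_\ep)(\mu\cdot\nabla\zeta_\ep)$ (the second-order Riesz multipliers $\xi_i\xi_j/|\xi|^2$ satisfying $\sum_{i,j}\xi_i^2\xi_j^2=1$ on $S^1$), and, since $\nabla z_\ep$ concentrates in the annuli around the disks as in Cioranescu--Murat, this defect is carried by macroscopic spheres and is isotropic, so that $\nabla\zeta_\ep\otimes\nabla\zeta_\ep\confai{1\over4\ga}I$ weakly-$*$ in $\M(\Om)^{2\times2}$ by an explicit annulus computation and \refe{ga}.. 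Hence $Dw_\ep^\la:Dw_\ep^\mu\confai{1\over4\ga}\la\cdot\mu$, i.e. $M={1\over4\ga}I$. Finally, since $\Ga\la\cdot\la={\ga\over4(\ga^2+1)}|\la|^2<{1\over4\ga}|\la|^2=M\la\cdot\la$ for $\la\neq0$ and $\Ga$ is not symmetric, the homogenized equation \refe{Be}. is not the one the Tartar procedure would predict, which proves the optimality of the equi-integrability assumption. The main obstacle is twofold: the explicit Stokes cell computation producing exactly $\Ga$ (unlike the scalar case, the pressure contributes to the reaction force), and the rigorous passage to the limit in the concentrated product ${1_{\om_\ep}\over|Q_{r_\ep}|}JW_\ep^\la\cdot\ph\,u_\ep$ — the classical Cioranescu--Murat difficulty of reconciling the merely weak $H^1$ convergence of $u_\ep$ with a drift concentrating on sets of vanishing measure — handled as in \cite{CiMu} and \cite{All}.
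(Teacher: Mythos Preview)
Your proposal takes a route genuinely different from the paper's, but contains a real gap in the identification of $\Ga$.

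For the first step you build correctors $W_\ep^\la$ solving the \emph{drifted} Stokes cell problem and plan to pass to the limit in $\int_\Om\frac{1_{\om_\ep}}{|Q_{r_\ep}|}\,JW_\ep^\la\cdot\ph\,u_\ep\,dx$ via $H^{-1}_{\rm loc}$-compactness of $\frac{1_{\om_\ep}}{|Q_{r_\ep}|}\,JW_\ep^\la$. That compactness fails. From your corrector equation one has $\frac{1_{\om_\ep}}{|Q_{r_\ep}|}\,JW_\ep^\la=\De W_\ep^\la-\nabla Q_\ep^\la$, and weak $H^1\times L^2$ convergence of $(W_\ep^\la,Q_\ep^\la)$ gives only weak $H^{-1}$ convergence of the right-hand side. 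More concretely, $W_\ep^\la$ is approximately a nonzero constant in each tiny disk, so the drift term behaves like a fixed vector times $\frac{1_{\om_\ep}}{|Q_{r_\ep}|}$; and this sequence does \emph{not} converge strongly in $H^{-1}(\Om)$: if it did, $z_\ep=\De^{-1}\bigl(\frac{1_{\om_\ep}}{|Q_{r_\ep}|}\bigr)$ would converge strongly in $H^1_0(\Om)$ and $v_\ep=J\nabla z_\ep$ would be equi-integrable in $L^2(\Om)^2$, contradicting the very purpose of the counter-example. So the mechanism you invoke for the concentrated product does not work. The paper sidesteps this entirely by using the Allaire correctors $v_\ep^i$ of \refe{vpie}.--\refe{SVie}., which solve a \emph{drift-free} Stokes problem in each annulus and satisfy $v_\ep^i=e_i$ \emph{exactly} in the tiny disks. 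Testing \refe{pSeoe}. with $\ph\,v_\ep^i$ then turns the drift term directly into $\int\frac{1_{\om_\ep}}{|Q_{r_\ep}|}\,Ju_\ep\cdot e_i\,\ph$, i.e.\ the unknown measure $\nu$ paired with $e_i$; a second relation comes from the rescaled estimate \refe{eDVie}. of Lemma~\ref{lem.veil} with $v=\ph\,u_\ep$. Equating the two yields the self-consistent linear system $\nu=\ga\,J\nu+\frac{\ga}{4}\,u$, whose solution gives \refe{nucexS}. and hence $\Ga$. No compactness of a concentrated sequence is required.

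For the derivation of $M$ your stream-function/H-measure idea is a different, elegant route, and the heuristic $\nabla\zeta_\ep\otimes\nabla\zeta_\ep\confai\frac{1}{4\ga}\,I$ is correct. But the ansatz $w_\ep^\la=J\nabla\psi_\ep^\la$ with constant pressure is incompatible with the Dirichlet condition $w_\ep^\la=0$ on $\partial\Om$ (it forces both $\psi_\ep^\la$ and $\partial_n\psi_\ep^\la$ to vanish there, an overdetermined problem for $\De\psi_\ep^\la=\la\cdot\nabla\zeta_\ep$), and ``negligible boundary layers'' hides exactly the work the paper carries out: it introduces the periodic solutions $\wp,\qp$ of \refe{SeWelp}., proves via \refe{welwp}. that they yield the same $M$ as the Dirichlet solutions $w_\ep^\la$, and then computes $M$ from \refe{MTbWl}.--\refe{bWl=}. by again pairing $\Wp$ with the Allaire correctors $V_\ep^i$.
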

\begin{Rem}\label{remB}
The matrix $\Ga$ of the Brinkman equation \refe{Be}. is not symmetric contrary to the matrix ${M}$ arising in the Tartar approach.
Moreover, we have
\[
\Ga{u}\cdot u<{M}u\cdot u\quad\mbox{if }u\neq 0.
\]
The gap between the two previous energies (which are the energies dissipated by viscosity according to \cite{Tar1}) is due to the loss of equi-integrability of the sequence $v_\ep$ defined by \refe{vecex}.. Therefore, the equi-integrability of $v_\ep$ can be regarded as an optimal condition to ensure the result of Theorem~\ref{thm2}.
\end{Rem}
\begin{Rem}\label{rem.cor}
It is worth to mention that the pathology displayed in Theorem~\ref{thm.cexS} is not due to the absence of correctors. Indeed, with the oscillating sequences $v^1_\ep$, $v^2_\ep$ defined by \refe{vpie}., \refe{VPie}. below, the following corrector result holds:
\end{Rem}
\begin{Pro}\label{pro.cor}
Assume that $u\in W^{1,r}(\Om)^2$ for some $r>2$. Then, we have
\beq\label{scue}
\ba{ll}
& u_\ep-u-v_1\,v_\ep^1-v_2\,v_\ep^2\;\longrightarrow\; 0\quad\mbox{strongly in }H^1(\Om),
\\ \ecart
\mbox{where} & \dis v=(v_1,v_2):={1\over\ga^2+1}\left(-\,u_1+\ga u_2,-\,u_2-\ga u_1\right).
\ea
\eeq
\end{Pro}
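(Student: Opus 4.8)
\noindent{\bf Proof proposal.}
This is the Stokes corrector statement, exactly parallel to the corrector result \refe{corueWe}. of Theorem~\ref{thm2}, with the Tartar test matrix $W_\ep$ replaced by the ``true'' oscillating correctors. Write $V_\ep$ for the $2\times2$ matrix-valued function whose columns are $v_\ep^1,v_\ep^2$, so that $v_1\,v_\ep^1+v_2\,v_\ep^2=V_\ep\,v$ and the claim reads $u_\ep-u-V_\ep\,v\to0$ strongly in $H^1(\Om)^2$. The plan is to follow the fifth step of the proof of Theorem~\ref{thm2}. Since $\Om$ is regular and $u\in W^{1,r}(\Om)^2\cap H^1_0(\Om)^2$ with $r>2$, one has $u\in W^{1,r}_0(\Om)^2$, hence $v=(v_1,v_2)\in W^{1,r}_0(\Om)^2$, and in particular $v_i\in C(\bar\Om)$ with $v_i=0$ on $\partial\Om$; combined with the $H^1(\Om)^2$-boundedness of each $v_\ep^i$ and the embedding $H^1\hookrightarrow L^s$, $s<\infty$, valid in dimension two, this gives $V_\ep\,v\in H^1_0(\Om)^2$, so that $u_\ep-u-V_\ep\,v\in H^1_0(\Om)^2$ and, by the Poincar\'e inequality, it is enough to prove $\int_\Om|D(u_\ep-u-V_\ep\,v)|^2\,dx\to0$.

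Fix $\ph\in C^\infty_c(\Om)^2$ close to $v$ in $W^{1,r}(\Om)^2$ (density holds since $v\in W^{1,r}_0(\Om)^2$). I would expand $\int_\Om|D(u_\ep-u-V_\ep\ph)|^2\,dx$ about $u+V_\ep\ph$ and pass to the limit using three inputs: the energy identity for \refe{pSeoe}., obtained exactly as \refe{estDue}., namely $\int_\Om|Du_\ep|^2\,dx=\langle f,u_\ep\rangle\to\langle f,u\rangle$; the convergence $v_\ep^i\confai0$ weakly in $H^1(\Om)^2$, hence strongly in every $L^s(\Om)^2$, so that the crossed terms $v_\ep^i\otimes\nabla\ph_j\to0$ strongly in $L^2(\Om)^{2\times2}$ (here $r>2$ lets one absorb $\nabla\ph_j$) and $D(V_\ep\ph)$ may be replaced by $\sum_i\ph_i\,Dv_\ep^i$ up to $o(1)$ in $L^2$; and the cell convergences $Du_\ep:Dv_\ep^i\confai\kappa_i$, $Dv_\ep^i:Dv_\ep^j\confai N_{ij}$ in the weak-$*$ sense of measures, established in the proof of Theorem~\ref{thm.cexS} for the correctors $v_\ep^1,v_\ep^2$ of \refe{vpie}., \refe{VPie}. by the oscillating test function method (the pressures, bounded in $L^q_{\rm loc}(\Om)$ for $q<2$, drop out against divergences tending to $0$ strongly in $L^{q'}_{\rm loc}(\Om)$, which again uses $r>2$). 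This yields
\[
\int_\Om|D(u_\ep-u-V_\ep\ph)|^2\,dx=\langle f,u\rangle-\int_\Om|Du|^2\,dx-2\sum_i\int_\Om\ph_i\,d\kappa_i+\sum_{i,j}\int_\Om\ph_i\,\ph_j\,dN_{ij}+o(1).
\]

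Now testing the variational form of \refe{Be}. against $u$ and using $\int_\Om Ju\cdot u\,dx=0$ gives $\langle f,u\rangle-\int_\Om|Du|^2\,dx=\int_\Om\Ga u\cdot u\,dx$. The cell data are symmetric, $N_{ij}=N_{ji}$, and --- this is the point where the explicit formula for $v$ in \refe{scue}. is used --- satisfy the identities $d\kappa_i=\sum_j v_j\,dN_{ij}$ and $\int_\Om\Ga u\cdot u\,dx=\sum_{i,j}\int_\Om v_i\,v_j\,dN_{ij}$; equivalently $v=N^{-1}\kappa$ (linear in $u$, $N$ being constant and positive definite) and the corrector energy $\sum_{i,j}\int v_i v_j\,dN_{ij}$ it generates matches the strange-term energy $\int_\Om\Ga u\cdot u\,dx$, through the relations $\Ga={1\over 4(\ga^2+1)}(\ga I-J)$, i.e. $\Ga u={1\over4}\,Jv$, and $M={1\over4\,\ga}\,I$. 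Substituting these into the previous display and completing the square in $\ph-v$ (legitimate by the symmetry of $N_{ij}$) gives
\[
\int_\Om|D(u_\ep-u-V_\ep\ph)|^2\,dx=\sum_{i,j}\int_\Om(\ph_i-v_i)(\ph_j-v_j)\,dN_{ij}+o(1),
\]
whose right-hand side is at most $c\,\|\ph-v\|_{C(\bar\Om)}^2+o(1)$ since $\ph-v\in C(\bar\Om)$ (again $r>2$). Finally, the triangle inequality gives $\int_\Om|D(u_\ep-u-V_\ep v)|^2\,dx\le2\int_\Om|D(u_\ep-u-V_\ep\ph)|^2\,dx+2\int_\Om|D(V_\ep(v-\ph))|^2\,dx$, and $\int_\Om|D(V_\ep(v-\ph))|^2\,dx\le c\big(\|v-\ph\|_{C(\bar\Om)}^2\,\|DV_\ep\|_{L^2(\Om)}^2+\|V_\ep\|_{L^{2r/(r-2)}(\Om)}^2\,\|D(v-\ph)\|_{L^r(\Om)}^2\big)\le c\,\|v-\ph\|_{W^{1,r}(\Om)^2}^2$; here $r>2$ is decisive --- it is exactly what upgrades the conclusion from $W^{1,1}$, as in Theorem~\ref{thm2}, to $H^1$. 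Letting first $\ep\to0$ and then $\ph\to v$ in $W^{1,r}(\Om)^2$ concludes.

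The genuine obstacle is the second half of the third paragraph: identifying the exact (constant, by periodicity) cell matrix $N$ and the coupling $\kappa$ for the correctors of \refe{vpie}., \refe{VPie}., and verifying the two algebraic identities $\kappa=Nv$ and $Nv\cdot v=\Ga u\cdot u$. The data $N$ and $\kappa$ are produced inside the proof of Theorem~\ref{thm.cexS}, and the identities reduce to a $2\times2$ matrix computation in which the algebra of $I$, $J$ and $\ga$ --- together with the specific linear map $u\mapsto v={1\over\ga^2+1}(-u_1+\ga u_2,\,-u_2-\ga u_1)$ --- is the point that makes everything close. All the rest (the pressure and divergence bookkeeping in the Tartar steps, the density/limiting argument in $\ph$) is routine once $r>2$ is available.
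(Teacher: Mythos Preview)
Your proposal is correct and follows essentially the same route as the paper. The paper expands $E_\ep:=\int_\Om|Du_\ep-Du-v_1Dv_\ep^1-v_2Dv_\ep^2|^2\,dx$, computes the limit using the energy identity, the periodic cell limits $|Dv_\ep^i|^2\to\ga/4$, and the cross terms $\int_\Om v_i\,Du_\ep\!:\!Dv_\ep^i$ evaluated via estimate~\refe{eDvie}. together with the convergence~\refe{nucexS}., and then checks by direct substitution that the specific $v=-\,(\ga^2+1)^{-1}(I+\ga J)u$ makes $E_\ep\to0$. The only notable difference is that the paper works directly with $v\in W^{1,r}(\Om)^2\subset C(\bar\Om)^2$ (using $v_i\,u_\ep\in H^1_0(\Om)^2$ as test function), so the smooth approximation $\ph$ and the subsequent triangle-inequality step in your argument are unnecessary; your abstract packaging via $N=(\ga/4)I$, $\kappa_i=-[\nu]_i$ and the identities $\kappa=Nv$, $Nv\cdot v=\Ga u\cdot u$ is exactly the algebra the paper carries out explicitly in \refe{e6t2}.--\refe{e9t2}..
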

\begin{Rem} If the right-hand side $f$ belongs to $W^{-1,r}(\Om)^2$ for some $r>2$, then using the regularity results for the Stokes equation (see, e.g., \cite{Lad}) the solution $u$ of the Stokes equation~\refe{Be}. belongs to $W^{1,r}(\Om)^2$. This provides a quite general condition under which the strong convergence \refe{scue}. holds. 
\end{Rem}
\par
The proof of \refT{thm.cexS}. is partially based on the properties of the test functions $v_\ep^1$, $v_\ep^2$ defined by \refe{vpie}., \refe{VPie}. below, and introduced by Allaire \cite{All}. They were also used in \cite{Bri} to derive a homogenized Brinkman type equation but, contrary to \refe{pSe}., from a Stokes equation without zero-order term. More precisely, in \cite{All} the velocity is assumed to be zero in the set~$\om_\ep$. In~\cite{Bri} the viscosity is assumed to be very high in cylinders of section $\om_\ep$, which leads to a three-dimensional nonlocal Brinkman equation.
In the perturbed Stokes equation \refe{pSeoe}. a highly oscillating zero-order term is concentrated on $\om_\ep$.
\par\bs
On the one hand, the sets $Q_{r_\ep}$ and $\om_\ep$ satisfy the following estimates:
\begin{Lem}\label{lem.oe}
There exists a constant $C>0$ such that
\beq\label{eQe}
\forall\,V\in H^1(Y),\quad\left|\,\moy_{Q_{r_\ep}}V\,dy-\moy_{Y}V\,dy\,\right|\leq C\,\sqrt{|\ln r_\ep|}\,\|\nabla V\|_{L^2(Y)^2},
\eeq
\beq\label{eoe}
\forall\,v\in H^1_0(\Om),\quad\moy_{\om_\ep}|v|^2\,dx\leq C\left(1+\ep^2|\ln r_\ep|\right)\|\nabla v\|^2_{L^2(\Om)^2}.
\eeq
\end{Lem}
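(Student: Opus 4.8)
The whole lemma rests on one two--dimensional logarithmic capacity estimate, which I would state and prove first. For $V\in H^1(Y)$, write $Q_\rho$ for the disk of radius $\rho$ centred at the origin and let $m_V(\rho):=\tfrac1{2\pi}\int_0^{2\pi}V(\rho\cos\theta,\rho\sin\theta)\,d\theta$ be the average of $V$ over the circle $\partial Q_\rho$. The claim is that for $0<\rho<\si\le\tfrac12$,
\[
\big|m_V(\rho)-m_V(\si)\big|\le\Big(\tfrac{\ln(\si/\rho)}{2\pi}\Big)^{1/2}\|\nabla V\|_{L^2(Q_\si\setminus Q_\rho)^2}.
\]
To see this, let $\Phi$ be the capacitary potential of the condenser $(Q_\rho,Q_\si)$: $\Phi\equiv1$ on $Q_\rho$, $\Phi\equiv0$ outside $Q_\si$, and $\Phi(y)=\frac{\ln|y|-\ln\si}{\ln\rho-\ln\si}$ in the annulus, so that $\Delta\Phi=0$ in the annulus and $\int_Y|\nabla\Phi|^2\,dy=\frac{2\pi}{\ln(\si/\rho)}$. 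Since $\nabla\Phi$ is supported in $\overline{Q_\si\setminus Q_\rho}$, Green's formula gives $\int_Y\nabla V\cdot\nabla\Phi\,dy=\frac{2\pi}{\ln(\si/\rho)}\big(m_V(\rho)-m_V(\si)\big)$, and the Cauchy--Schwarz inequality yields the claim.

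To obtain \refe{eQe}. I would compare the averages $\moy_{Q_{r_\ep}}V\,dy$ and $\moy_Y V\,dy$ through the intermediate quantities $m_V(r_\ep)$ and $m_V(\tfrac12)$. The only term carrying the factor $\sqrt{|\ln r_\ep|}$ is $|m_V(r_\ep)-m_V(\tfrac12)|$, which the capacity estimate above bounds by $C\sqrt{|\ln r_\ep|}\,\|\nabla V\|_{L^2(Y)^2}$. The two remaining pieces are $O\big(\|\nabla V\|_{L^2(Y)^2}\big)$: the quantity $m_V(\tfrac12)-\moy_Y V\,dy$ is controlled by a fixed--domain Poincar\'e--Wirtinger inequality on $Y$, and $\moy_{Q_{r_\ep}}V\,dy-m_V(r_\ep)=\tfrac2{r_\ep^2}\int_0^{r_\ep}\rho\,\big(m_V(\rho)-m_V(r_\ep)\big)\,d\rho$ is handled by applying the capacity estimate under the integral together with the convergence of $\int_0^1 t\,|\ln t|^{1/2}\,dt$. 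Summing the three bounds gives \refe{eQe}..

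For \refe{eoe}. the plan is to localize, rescale, and sum. Extend $v$ by zero to $\RR^2$; on the cell $Y_\ep^\ka:=2\ep\ka+\ep(-1,1)^2$ put $V_\ka(y):=v(2\ep\ka+\ep y)$, so that $\int_{\om_\ep}|v|^2\,dx=\sum_\ka\ep^2\int_{Q_{r_\ep}}|V_\ka|^2\,dy$, while $\|\nabla V_\ka\|_{L^2(Y)^2}=\|\nabla v\|_{L^2(Y_\ep^\ka)^2}$ by scale invariance of the Dirichlet integral in dimension two. The local estimate I would establish is
\[
\int_{Q_{r_\ep}}|V|^2\,dy\le C\,r_\ep^2\Big[\big(\moy_Y V\,dy\big)^2+\big(1+|\ln r_\ep|\big)\,\|\nabla V\|_{L^2(Y)^2}^2\Big].
\]
Splitting $V(\rho,\theta)=m_V(\rho)+\big(V-m_V(\rho)\big)$, the angular fluctuation contributes $C\,r_\ep^2\|\nabla V\|_{L^2(Y)^2}^2$ by the Poincar\'e--Wirtinger inequality on each circle (using $|\partial_\theta V|\le\rho|\nabla V|$), while the radial part $2\pi\int_0^{r_\ep}\rho\,m_V(\rho)^2\,d\rho$ is controlled by combining the capacity estimate between radii $\rho$ and $r_\ep$, the bound $|m_V(r_\ep)-\moy_Y V\,dy|\le C\sqrt{|\ln r_\ep|}\,\|\nabla V\|_{L^2(Y)^2}$ from the previous paragraph, and the elementary identity $\int_0^{r_\ep}\rho\ln(r_\ep/\rho)\,d\rho=\tfrac14 r_\ep^2$. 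Summing over $\ka$, the gradient terms add up to $\|\nabla v\|_{L^2(\Om)^2}^2$, and for the cell averages Cauchy--Schwarz gives $\big(\moy_{Y_\ep^\ka}v\,dx\big)^2\le(4\ep^2)^{-1}\int_{Y_\ep^\ka}|v|^2\,dx$, whence $\ep^2\sum_\ka\big(\moy_{Y_\ep^\ka}v\,dx\big)^2\le\tfrac14\|v\|_{L^2(\Om)^2}^2\le C\|\nabla v\|_{L^2(\Om)^2}^2$ by the Poincar\'e inequality on $\Om$ --- here the hypothesis $v\in H^1_0(\Om)$ is used decisively. This yields $\int_{\om_\ep}|v|^2\,dx\le C\,r_\ep^2\,(1+\ep^2|\ln r_\ep|)\,\|\nabla v\|_{L^2(\Om)^2}^2$; dividing by $|\om_\ep|\ge c\,r_\ep^2$ (for $\ep$ small, a fixed proportion of the pairwise disjoint disks lie inside $\Om$) proves \refe{eoe}..

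The step I expect to be the real obstacle is securing $\sqrt{|\ln r_\ep|}$ rather than $|\ln r_\ep|$ in \refe{eQe}.: a crude dyadic telescoping of $\moy_{Q_{r_\ep}}V\,dy$ along the radii $2^jr_\ep$ loses a full power of the logarithm, and the point is that the Dirichlet energies on the dyadic annuli are mutually disjoint --- which the single capacitary potential $\Phi$, with $\int_Y|\nabla\Phi|^2\,dy\sim1/|\ln r_\ep|$, captures in one stroke via Cauchy--Schwarz. In the case of \refe{eoe}., the parallel subtlety is that the local inequality necessarily carries the term $\big(\moy_Y V\,dy\big)^2$, which admits no bound by the gradient cell by cell; only its sum over the $O(\ep^{-2})$ cells is controlled, through the global zero boundary condition, and this is precisely what produces the additive ``$1$'' beside the capacitary contribution ``$\ep^2|\ln r_\ep|$''.
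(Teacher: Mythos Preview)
Your argument is correct. The paper itself gives essentially no proof: for \refe{eQe}. it says only ``can be easily proved using the polar coordinates'', and for \refe{eoe}. it cites Lemma~3 of \cite{PiSe} and remarks that it ``can also be deduced from \refe{eQe}.''. You have supplied in full the deduction the paper only hints at.

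Your capacitary-potential device and the paper's ``polar coordinates'' are the same computation in two dresses: writing $m_V'(\rho)=\tfrac{1}{2\pi}\int_0^{2\pi}\partial_r V\,d\theta$ and applying Cauchy--Schwarz twice (once in $\theta$, once in $r$ against the weight $r^{-1}$) gives exactly $|m_V(\rho)-m_V(\si)|\le\big(\tfrac{\ln(\si/\rho)}{2\pi}\big)^{1/2}\|\nabla V\|_{L^2(Q_\si\setminus Q_\rho)}$, with equality precisely when $V$ is your capacitary potential $\Phi$. So the two presentations are equivalent, and your remark about why one gets $\sqrt{|\ln r_\ep|}$ rather than $|\ln r_\ep|$ (disjointness of the Dirichlet energies on annuli, captured in a single Cauchy--Schwarz) is exactly the point. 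For \refe{eoe}. you carry out the ``deduce from \refe{eQe}.'' route the paper mentions as an alternative to citing \cite{PiSe}: localize to each $\ep$-cell, apply the one-cell estimate with its unavoidable mean-value term $\big(\moy_Y V\,dy\big)^2$, and then absorb the sum of cell means through the Poincar\'e inequality on $\Om$ --- this is where $v\in H^1_0(\Om)$ enters, as you note.
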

\begin{proof}
Estimate~\refe{eQe}. can be easily proved using the polar coordinates. Estimate \refe{eoe}. is an immediate consequence of the Lemma~3 of \cite{PiSe}, and can also be deduced from \refe{eQe}..
\end{proof}
\par
On the other hand, consider the $\ep Y$-periodic functions $v_\ep^i$ and $p_\ep^i$, for $i=1,2$, defined by
\beq\label{vpie}
v_\ep^i(x):=V_\ep^i\left({x\over\ep}\right),\quad p_\ep^i(x):={1\over\ep}\,P_\ep^i\left({x\over\ep}\right),\quad\mbox{for }x\in\RR^2,
\eeq
where $V_\ep^i\in H^1_\#(Y)$ are $P_\ep^i\in L^2(Y)$ are the $Y$-periodic functions defined by
\beq\label{VPie}
V_\ep^i:=\left\{\ba{ll} e_i & \mbox{in }Q_{r_\ep} \\ 0 & \mbox{in }Y\setminus Q,\ea\right.
\quad P_\ep^i=0\;\;\mbox{in }Q_{r_\ep}\cup\left(Y\setminus Q\right),\quad\int_Y P_\ep^i\,dy=0,
\eeq
which solve the Stokes equation
\beq\label{SVie}
-\,\De V_\ep^i+\nabla P_\ep^i=0\quad\mbox{in }Q_{r_\ep}\setminus\bar{Q}.
\eeq
Moreover, the sequences $V_\ep^i$ and $P_\ep^i$ satisfy the following estimates:
\begin{Lem}\label{lem.veil}
There exists a constant $C>0$ such that
\beq\label{eVPie}
\left\{\ba{rl}
\dis \|V_\ep^i\|_{L^2(Y)^2}+\|DV_\ep^i\|^2_{L^2(Y)^{2\times 2}}+\|P_\ep^i\|^2_{L^2(Y)} & \dis \leq{C\over|\ln r_\ep|}
\\ \ecart
\dis \|V_\ep^i\|_{L^\infty(Y)^2} & \leq C,
\ea\right.
\eeq
and for any function $V\in H^1(Y)$,
\beq\label{eDVie}
\ba{l}
\dis \left|\,\int_Y DV_\ep^i:DV\,dy-\int_Y P_\ep^i\,\div\left(V\right)dy-\ga_\ep^i\,e_i\cdot\left(\moy_{Q_{r_\ep}}V-\moy_{Y\setminus Q}V\right)\right|
\\ \ecart
\dis \leq{C\over|\ln r_\ep|}\,\|DV\|_{L^2(Y)^{2\times 2}},
\ea
\eeq
where $-\kern-.85em\int$ denotes the average value and 
\beq\label{gie}
\ga_\ep^i\;\mathop{\approx}_{\ep\to 0}\;{4\pi\over|\ln r_\ep|}.
\eeq
\end{Lem}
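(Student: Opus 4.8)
The plan is to compute $V_\ep^i$ and $P_\ep^i$ in closed form and read off \refe{eVPie}. and the constant in \refe{gie}. directly from the formula, while the key estimate \refe{eDVie}. will come from an integration by parts that turns the bilinear form into a sum of two boundary integrals, over the circles $\{|y|=r_\ep\}$ and $\{|y|=1\}$, which are then linearised in $V$.

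By rotational invariance I would take $i=1$ (the general $e_i$ case follows by rotating the $y$-variable). Since $\div(V_\ep^i)=0$ and $V_\ep^i$ equals the constant $e_1$ in $Q_{r_\ep}$ and $0$ in $Y\setminus Q$, the pair $(V_\ep^1,P_\ep^1)$ is the unique solution of the Stokes system \refe{SVie}. in the annulus $A_\ep:=Q\setminus\bar{Q}_{r_\ep}$ with $V_\ep^1=e_1$ on $\{|y|=r_\ep\}$ and $V_\ep^1=0$ on $\{|y|=1\}$. Writing $V_\ep^1=(\partial_{y_2}\psi,-\partial_{y_1}\psi)$ with stream function $\psi=f(r)\sin\th$, the biharmonic equation forces $f(r)=Ar^3+Br\ln r+Cr+D/r$; the four conditions $f(1)=f'(1)=0$, $f(r_\ep)=r_\ep$, $f'(r_\ep)=1$ form a linear system that I would solve and expand as $r_\ep\to0$, obtaining $D=-Ar_\ep^2$, $C=-A-D$, $B=2D-2A$ and $A=\big(2\,[\,r_\ep^2-(r_\ep^2+1)\ln r_\ep-1\,]\big)^{-1}\approx\frac1{2|\ln r_\ep|}$, all coefficients being $O(|\ln r_\ep|^{-1})$ except $D=O(r_\ep^2)$. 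The pressure is $P_\ep^i=r\,g'(r)\cos\th$ with $g:=f''+f'/r-f/r^2=8Ar+2B/r$, and $\int_Y P_\ep^i=0$ holds automatically since the angular factor is $\cos\th$. From these closed forms every quantity in \refe{eVPie}. reduces to $\int_{r_\ep}^1(\cdot)\,r\,dr$ of an explicit rational function: the $r^{-1}$-terms produce the logarithm, so $\|DV_\ep^i\|_{L^2(Y)}^2$ and $\|P_\ep^i\|_{L^2(Y)}^2$ are of order $A^2|\ln r_\ep|\approx|\ln r_\ep|^{-1}$ and $\|V_\ep^i\|_{L^2(Y)}^2$ is of order $A^2+r_\ep^2\approx|\ln r_\ep|^{-2}$, while $|f(r)/r|$ and $|f'(r)|$ stay $\le A(2|\ln r_\ep|+C)\le C$ on $(r_\ep,1)$, giving $\|V_\ep^i\|_{L^\infty}\le C$. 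For the constant in \refe{gie}. I would use that $V_\ep^i\in H_0^1(Y)$ is itself an admissible divergence-free test function (it vanishes near $\partial Y$), so $\|DV_\ep^i\|_{L^2(Y)}^2=\ga_\ep^i$ where $\ga_\ep^i:=e_i\cdot\int_{\{|y|=r_\ep\}}(\partial_nV_\ep^i-P_\ep^i n)\,d\si$, and a direct evaluation of this boundary integral from the formula gives $\ga_\ep^i=8\pi A(1+r_\ep^2)\approx4\pi/|\ln r_\ep|$; the direction is $e_i$ by the reflection symmetry $y_2\mapsto-y_2$.

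For \refe{eDVie}., since $DV_\ep^i$ and $P_\ep^i$ vanish outside $A_\ep$ and $-\De V_\ep^i+\nabla P_\ep^i=0$ in $A_\ep$, integrating by parts on $A_\ep$ — legitimate for $V\in H^1(Y)$ by density, $V_\ep^i$ being smooth up to $\partial A_\ep$ — yields
\[\int_Y DV_\ep^i:DV\,dy-\int_Y P_\ep^i\,\div(V)\,dy=\int_{\{|y|=r_\ep\}}(\partial_nV_\ep^i-P_\ep^i n)\cdot V\,d\si+\int_{\{|y|=1\}}(\partial_nV_\ep^i-P_\ep^i n)\cdot V\,d\si,\]
$n$ being the outward normal of $A_\ep$ (pointing to the origin on the inner circle). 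I would then replace $V$ by $\moy_{Q_{r_\ep}}V$ on the inner circle and by $\moy_{Y\setminus Q}V$ on the outer one; by the force balance $\int_{\{|y|=1\}}(\partial_nV_\ep^i-P_\ep^i n)\,d\si=-\int_{\{|y|=r_\ep\}}(\partial_nV_\ep^i-P_\ep^i n)\,d\si=-\ga_\ep^i e_i$, the two leading contributions combine into exactly $\ga_\ep^i\,e_i\cdot(\moy_{Q_{r_\ep}}V-\moy_{Y\setminus Q}V)$. The two remainders I would bound by Cauchy--Schwarz on each circle together with a trace--Poincar\'e inequality: on $\{|y|=1\}$ one has $\|\partial_nV_\ep^i-P_\ep^i n\|_{L^2}\le C|\ln r_\ep|^{-1}$ and $\|V-\moy_{Y\setminus Q}V\|_{L^2(\{|y|=1\})}\le C\|DV\|_{L^2(Y)}$; on $\{|y|=r_\ep\}$ the closed form shows $\partial_nV_\ep^i$ and $P_\ep^i$ are of size $A/r_\ep$, hence $\|\partial_nV_\ep^i-P_\ep^i n\|_{L^2(\{|y|=r_\ep\})}\le C|\ln r_\ep|^{-1}r_\ep^{-1/2}$, while rescaling the trace inequality on the disk $Q_{r_\ep}$ gives $\|V-\moy_{Q_{r_\ep}}V\|_{L^2(\{|y|=r_\ep\})}\le C\,r_\ep^{1/2}\|DV\|_{L^2(Q_{r_\ep})}$; the powers of $r_\ep$ cancel and each remainder is $\le C|\ln r_\ep|^{-1}\|DV\|_{L^2(Y)}$, which is \refe{eDVie}..

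The step I expect to be the real obstacle is precisely this cancellation on the small circle: the surface stress $\partial_nV_\ep^i-P_\ep^i n$ blows up pointwise like $r_\ep^{-1}|\ln r_\ep|^{-1}$, so its $L^2(\{|y|=r_\ep\})$ norm is of order $r_\ep^{-1/2}|\ln r_\ep|^{-1}$, and only the $r_\ep^{1/2}$ gain from the scaled trace--Poincar\'e estimate on the shrinking disk $Q_{r_\ep}$ keeps the error of size $|\ln r_\ep|^{-1}$; so the whole argument rests on getting the exponents and constants in the explicit solution exactly right, which is also what pins the sharp constant $4\pi$ in \refe{gie}.. A minor point to watch is that $Y\setminus Q$ is actually disconnected (four corner components), so the Poincar\'e inequality there is to be read in the $Y$-periodic sense (equivalently for $V\in H^1_\#(Y)$), which is the only situation in which \refe{eDVie}. will be used — namely for the rescaled periodic test functions of \refe{vpie}..
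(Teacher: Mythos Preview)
Your approach is correct and is precisely the route the paper points to: the paper's own proof consists only of the remark that \refe{eVPie}. follows from the explicit polar-coordinate computation (citing Allaire~\cite{All}) and that \refe{eDVie}. is a consequence of Lemma~3.3 of the companion reference, so you have effectively written out what those citations contain. One small remark: your closing caveat about $Y\setminus Q$ being disconnected is unnecessary, since the bound $\|V-\moy_{Y\setminus Q}V\|_{L^2(\partial Q)}\le C\|DV\|_{L^2(Y)}$ holds for every $V\in H^1(Y)$ by routing through the connected disk $Q$ (trace plus Poincar\'e there) and then using $|\moy_Q V-\moy_{Y\setminus Q}V|\le C\|DV\|_{L^2(Y)}$, which follows from the Poincar\'e inequality on the connected square $Y$; hence \refe{eDVie}. is valid exactly as stated, without any periodicity assumption on $V$.
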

\begin{proof}
Estimate \refe{eVPie}. can be proved using the polar coordinates (see also \cite{All}). Estimate \refe{eDVie}. is a straightforward consequence of the Lemma~3.3 of \cite{Bri} (with a refinement for the right-hand side of the inequality).
\end{proof}
\par\noindent
{\bf Proof of Theorem~\ref{thm.cexS}.}
The proof is divided into two steps. In the first step we determine the homogenized Brinkman equation \refe{Be}.. The second step is devoted to the computation of the matrix ${M}$ defined in the Tartar approach.
\par\ms\noindent
{\it First step: } Determination of the homogenized equation.
\par\ms\noindent
Using $u_\ep$ as test function we have
\[
\int_\Om|Du_\ep|^2=\langle f, u_\ep\rangle_{H^{-1}(\Om)^2,H^1_0(\Om)^2}\leq c\,\|f\|_{H^{-1}(\Om)^2}\,\|Du_\ep\|_{L^2(\Om)^{2\times 2}}\,,
\]
which implies that $u_\ep$ is bounded in $H^1_0(\Om)^2$.
On the other hand, let $\ph\in C^\infty_c(\Om)$ with zero $\Om$-average. There exists (see, e.g., \cite{Bog}) a vector-valued function $\Phi\in C^\infty_c(\Om)^2$ such that
\[
\div\left(\Phi\right)=\ph\;\;\mbox{in }\Om\quad\mbox{and}\quad\|\Phi\|_{H^1_0(\Om)^2}\leq c\,\|\ph\|_{L^2(\Om)}\,,
\]
where the constant $c$ is independent of $\ph$, $\Phi$. Using $\Phi$ as test function in equation \refe{pSeoe}. and applying successively the Cauchy-Schwarz inequality, estimates \refe{eoe}. and \refe{ga}. we get
\[
\ba{ll}
\dis \left|\,\int_\Om p_\ep\,\ph\,dx\,\right| & \dis \leq\left|\,\langle f,\Phi\rangle_{H^{-1}(\Om)^2,H^1_0(\Om)^2}\,\right|
+\left|\,\int_\Om Du_\ep:D\Phi\,\right|+\left|\,\int_\Om{1_{\om_\ep}\over|Q_{r_\ep}|}\,Ju_\ep\cdot\Phi\,\right|
\\ \ecart
& \dis \leq c\,\|D\Phi\|_{L^2(\Om)^{2\times 2}}+c\left(\moy_{\om_\ep}|u_\ep|^2\right)^{1\over 2}\left(\moy_{\om_\ep}|\Phi|^2\right)^{1\over 2}
\\ \ecart
& \dis \leq c\,\|D\Phi\|_{L^2(\Om)^{2\times 2}}+c\,\ep^2\,|\ln r_\ep|\,\|Du_\ep\|_{L^2(\Om)^{2\times 2}}\,\|D\Phi\|_{L^2(\Om)^{2\times 2}}
\leq c\,\|\ph\|_{L^2(\Om)}.
\ea
\]
This combined with the regularity of $\Om$ implies that $p_\ep$ is bounded in $L^2(\Om)/\RR$.
Therefore, up to a subsequence the following convergences hold
\beq\label{uepecex}
\left\{\ba{rll}
u_\ep & \confai u & \mbox{weakly in }H^1_0(\Om)^2
\\ \ecart
p_\ep & \confai p & \mbox{weakly in }L^2(\Om)/\RR.
\ea\right.
\eeq
\par
Now, we have to determine the limit of the sequence ${1_{\om_\ep}\over|Q_{r_\ep}|}\,Ju_\ep$.
On the one hand, re-scaling inequality \refe{eDVie}. we obtain that the functions $v_\ep^i$ and $p_\ep^i$, $i=1,2$, of \refe{vpie}. and any function $v\in H^1_0(\Om)^2$ satisfy the inequality
\beq\label{eDvie}
\ba{l}
\dis \left|\,\int_\Om Dv_\ep^i:Dv-\int_\Om p_\ep^i\,\div\left(v\right)-{\ga_\ep^i\over\ep^2}\,e_i\cdot\left(\int_\Om{1_{\om_\ep}\over|Q_{r_\ep}|}\,v
-\int_\Om{1_{Y\setminus Q}\over|Y\setminus Q|}\left({x\over\ep}\right)v\right)\right|
\\ \ecart
\dis \leq{c\over\ep\,|\ln r_\ep|}\,\|Dv\|_{L^2(\Om)^{2\times 2}}.
\ea
\eeq
Moreover, by \refe{eVPie}. and \refe{ga}. the following convergences hold
\beq\label{cvpie}
\left\{\ba{rll}
v_\ep^i & \confai 0 & \mbox{weakly in }H^1(\Om)^2
\\ \ecart
p_\ep^i & \confai 0 & \mbox{weakly in }L^2(\Om)/\RR.
\ea\right.
\eeq
Then, applying inequality \refe{eDvie}. with $v=\ph\,u_\ep$, $\ph\in C^\infty_c(\Om)$, we deduce from \refe{gie}. and \refe{ga}. that
\beq\label{e1t2}
\int_\Om Dv_\ep^i:Du_\ep\,\ph-\left(\ga+o(1)\right)e_i\cdot\left(\int_\Om{1_{\om_\ep}\over|Q_{r_\ep}|}\,\ph\,u_\ep-\int_\Om{1\over 4}\,\ph\,u\right)=o(1).
\eeq
On the other hand, putting $\ph\,v_\ep^i$ as test function in \refe{pSeoe}., using that $v_\ep^i=e_i$ in $\om_\ep$ and the convergences \refe{cvpie}., \refe{uepecex}., we have
\beq\label{e2t2}
\int_\Om Du_\ep:Dv_\ep^i\,\ph+\int_\Om{1_{\om_\ep}\over|Q_{r_\ep}|}\,Ju_\ep\cdot e_i\,\ph=o(1).
\eeq
Denote
\[
\nu:=\lim_{\ep\to 0}\,{1_{\om_\ep}\over|Q_{r_\ep}|}\,Ju_\ep\quad\mbox{weakly-$*$ in }\M(\Om)^2,
\]
where the limit holds up to a subsequence by virtue of the estimate \refe{eoe}. combined with the Cauchy-Schwarz inequality.
Then, equating \refe{e1t2}. and \refe{e2t2}. and passing to the limit we get for $i=1,2$,
\[
\int_\Om\ph\,e_i\cdot\nu=\ga\int_\Om\ph\,e_i\cdot J\nu+{\ga\over 4}\int_\Om\ph\,e_i\cdot u,\quad\mbox{for any }\ph\in C^\infty_c(\Om),
\]
which implies the equality $\dis\nu=\ga\,J\nu+{\ga\over 4}\,u$. Hence, we deduce the convergence
\beq\label{nucexS}
{1_{\om_\ep}\over|Q_{r_\ep}|}\,Ju_\ep\confai\nu={\ga\over 4}\,(I-\ga\,J)^{-1}\,u={\ga\over 4\,(\ga^2+1)}\,(I+\ga\,J)\,u\quad\mbox{weakly-$*$ in }\M(\Om)^2.
\eeq
Therefore, passing to the limit in \refe{pSeoe}. with \refe{nucexS}. we obtain the homogenized equation
\beq\label{hBe}
-\,\De u+{1\over 4}\,Ju+\nabla p+{1\over 4\,(\ga^2+1)}\,(\ga\,I-J)\,u=f\quad\mbox{in }\D'(\Om),
\eeq
which yields the desired Brinkman equation \refe{Be}. with the matrix $\Ga$ of \refe{Ga}..
\par\ms\noindent
{\it Second step:} Derivation of the matrix ${M}$.
\par\ms\noindent
Let $\la\in\RR^2$. Consider the solutions $\Wp\in H^1_\sharp(Y)$ (the set of the $Y$-periodic functions in $H^1_{\rm loc}(\RR^2)$) and $\Qp\in L^2_\sharp(Y)/\RR$ of the perturbed Stokes problem
\beq\label{SeWelp}
\left\{\ba{rll}
\dis -\,\De\Wp+\ep\left({1_{Q_{r_\ep}}\over|Q_{r_\ep}|}-{1\over 4}\right)J\la+\nabla\Qp & =0 & \mbox{in }\RR^2
\\
\div\left(\Wp\right) & =0 &  \mbox{in }\RR^2
\\ \ecart
\Wp & \mbox{is} & \mbox{$Y$-periodic}
\\ \ecart
\dis\int_Y\Wp & =0. &
\ea\right.
\eeq
Note that the first equation of \refe{SeWelp}. is equivalent to the variational formulation in the torus,
\beq\label{SeWelpt}
\forall\,V\in H^1_\sharp(Y),\quad\int_Y D\Wp:DV\,dy+\ep\left(\moy_{Q_{r_\ep}}\kern -.4em V-\moy_Y V\right)\cdot J\la-\int_Y\Qp\,\div\left(V\right)dy=0.
\eeq
Hence, the re-scaled functions $\wp$ and $\qp$ defined by
\beq\label{wqelp}
\wp(x):=\ep\,\Wp\left({x\over\ep}\right)\quad\mbox{and}\quad\qp(x):=\Qp\left({x\over\ep}\right),\quad\mbox{for }x\in\Om,
\eeq
are $\ep Y$-periodic solutions of the problem
\beq\label{Sewelp}
\left\{\ba{rll}
\dis -\,\De\wp+\left({1_{Q_{r_\ep}}\over|Q_{r_\ep}|}\left({x\over\ep}\right)-{1\over 4}\right)J\la+\nabla\qp & =0 & \mbox{in }\RR^2
\\
\div\left(\wp\right) & =0 &  \mbox{in }\RR^2.
\ea\right.
\eeq
\par
First of all, let us determine a priori estimates satisfied by the sequences $\Wp$, $\wp$, $\Qp$, and~$\qp$. Putting $\Wp$ as test function in equation \refe{SeWelpt}. we have
\beq\label{e3t2}
\int_Y\big|D\Wp\big|^2\,dy+\ep\moy_{Q_{r_\ep}}J\la\cdot\Wp\,dy=0,
\eeq
hence by the estimates \refe{eQe}. of \refL{lem.oe}. and \refe{ga}.
\[
\big\|D\Wp\big\|^2_{L^2(Y)^{2\times 2}}=\ep\left|\,\moy_{Q_{r_\ep}}\Wp\,dy\,\right|\leq C\,\ep\sqrt{|\ln r_\ep|}\,\big\|D\Wp\big\|_{L^2(Y)^{2\times 2}}
\leq c\,\big\|D\Wp\big\|_{L^2(Y)^{2\times 2}}.
\]
Therefore, $\Wp$ is bounded in $H^1_\sharp(Y)^2$, and there exists a constant vector $\bar{W}^\la\in\RR^2$ such that up to a subsequence we have
\beq\label{bWl}
\lim_{\ep\to 0}\left(\ep\moy_{Q_{r_\ep}}\kern -.4em\Wp\,dy\right)=\bar{W}^\la.
\eeq 
On the other hand, let $\ph\in C^\infty_\sharp(Y)$ with zero $Y$-average. There exists $\Phi\in C^\infty_\sharp(Y)^2$ with zero $Y$-average such that
\[
\div\left(\Phi\right)=\phÖ\;\;\mbox{in }\RR^2\quad\mbox{and}\quad\|\Phi\|_{L^2(Y)^{2\times 2}}\leq c\,\|\ph\|_{L^2(Y)},
\]
where $c$ is a constant independent of $\ph$, $\Phi$. Putting $\Phi$ as test function in \refe{SeWelpt}. we have by \refe{eQe}. and \refe{ga}.
\[
\ba{ll}
\dis \left|\,\int_Y\Qp\,\ph\,dy\,\right| & \dis \leq\left|\,\int_Y D\Wp:D\Phi\,dy\,\right|+\ep\left|\,\moy_{Q_{r_\ep}}\Phi\,dy\,\right|
\\ \ecart
& \dis \leq c\,\|D\Phi\|_{L^2(Y)^{2\times 2}}+c\,\ep\sqrt{|\ln r_\ep|}\,\|D\Phi\|_{L^2(Y)^{2\times 2}}
\\ \ecart
& \dis \leq c\,\|D\Phi\|_{L^2(Y)^{2\times 2}}\leq c\,\|\ph\|_{L^2(Y)},
\ea
\]
hence $\Qp$ is bounded in $L^2_\sharp(Y)/\RR$. From the boundedness and the $Y$-periodicity of $\Wp$ and $\Qp$ we thus deduce that the sequences $\wp$ and $\qp$ of \refe{wqelp}. satisfy the convergences
\beq\label{cwqelp}
\left\{\ba{rll}
\wp & \confai 0 & \mbox{weakly in }H^1(\Om)^2
\\ \ecart
\qp & \confai 0 & \mbox{weakly in }L^2(\Om)/\RR.
\ea\right.
\eeq
\par
Now, let us check that the periodic function $\wp$ of \refe{Sewelp}. gives the same matrix ${M}$ \refe{MT}. as the function $w_\ep^\la$ of \refe{Sewel}. which satisfies a Dirichlet boundary condition. Since ${M}$ is symmetric, this is equivalent to prove that for any $\la\in\RR^2$, 
\beq\label{welwp}
(D\wp)^T v_\ep\cdot\la-(Dw_\ep^\la)^T v_\ep\cdot\la\confai 0\quad\mbox{in }\D'(\Om),
\eeq
where $v_\ep$ is defined by \refe{vecex}.. Let $\ph\in C^\infty_c(\Om)$. Putting $\ph\,\wp$ in the equation \refe{Sewel}. satisfied by $w_\ep^\la$ and $\ph\,w_\ep^\la$ in the equation \refe{Sewelp}. satisfied by $\wp$, and using the convergences \refe{cwqelp}. satisfied by $\wp$, $\qp$ as well as the similar ones satisfied by $w_\ep^\la$, $q_\ep^\la$, we get
\beq\label{e4t2}
\left\{\ba{ll}
\dis \int_\Om Dw_\ep^\la:D\wp\,\ph-\int_\Om\curl\left(v_\ep\right)J\wp\cdot\la\,\ph & \dis \limo 0
\\ \ecart
\dis \int_\Om D\wp:Dw_\ep^\la\,\ph-\int_\Om\curl\left(v_\ep\right)Jw_\ep^\la\cdot\la\,\ph & \dis \limo 0.
\ea\right.
\eeq
Moreover, by the representation formula \refe{Duve}. we have
\beq\label{e5t2}
\left\{\ba{ll}
\curl\left(v_\ep\right)J\wp-(D\wp)^T\,v_\ep & \confai 0
\\ \ecart
\curl\left(v_\ep\right)Jw_\ep^\la-(Dw_\ep^\la)^T\,v_\ep & \confai 0
\ea\right.
\quad\mbox{in }\D'(\Om)^2.
\eeq
Therefore, combining \refe{e4t2}. and \refe{e5t2}. we obtain the desired convergence \refe{welwp}..
\par\medskip
It remains to determine the matrix ${M}$. On the one side putting $\wp$ as test function in \refe{Sewelp}. and using the convergences \refe{welwp}., \refe{MT}., and on the other side using the $\ep Y$-periodicity of $\wp$ \refe{wqelp}., we get similarly to \refe{MTr}. and up to a subsequence
\beq\label{MTp}
\left|D\wp\right|^2\confai {M}\la\cdot\la\;\;\mbox{and}\;\;\left|D\wp\right|^2\confai \lim_{\ep\to 0}\left(\moy_Y\left|D\Wp\right|^2\,dy\right)
\quad\mbox{weakly-$*$ in }\M(\Om).
\eeq
This combined with \refe{e3t2}. and \refe{bWl}. gives
\beq\label{MTbWl}
{M}\la\cdot\la={1\over 4}\,J\bar{W}^\la\cdot\la.
\eeq
Let us compute the constant vector $\bar{W}^\la$. To this end, putting the divergence free function $\Wp$ in the inequality \refe{eDVie}. satisfied by $V_\ep^i$, $i=1,2$, and taking into account the estimates \refe{eVPie}., \refe{ga}. and the boundedness of $\Wp$ in $H^1(Y)^2$, we have
\beq\label{eDVieWpl}
\int_Y DV_\ep^i:D\Wp\,dy=\ga_\ep^i\,e_i\cdot\left(\moy_{Q_{r_\ep}}\Wp\,dy\right)+o(\ep).
\eeq
Moreover, putting the divergence free function $V_\ep^i$ in \refe{SeWelpt}. with $V_\ep^i=e_i$ in $Q_{r_\ep}$, we get
\beq\label{eDVieWpl2}
\int_Y D\Wp:DV_\ep^i\,dy=-\,\ep\left(\moy_{Q_{r_\ep}}V_\ep^i-\moy_{Y}V_\ep^i\right)\cdot J\la=\ep\,Je_i\cdot\la+o(\ep),
\eeq
since by \refe{eVPie}. $V_\ep^i$ strongly converges to zero in $L^2(Y)^2$. The estimates \refe{eDVieWpl}. and \refe{eDVieWpl2}. divided by $\ep$ together with \refe{bWl}., \refe{gie}. and \refe{ga}. imply that
\beq\label{bWl=}
\ga\,e_i\cdot\bar{W}^\la=Je_i\cdot\la\quad\mbox{or equivalently}\quad \bar{W}^\la=-\,{1\over\ga}J\la.
\eeq
This combined with \refe{MTbWl}. yields the value \refe{MT}. of the symmetric matrix ${M}$. \cqfd
\par\bs\noindent
{\bf Proof of Proposition \ref{pro.cor}.}
Let $v=(v_1,v_2)\in W^{1,r}(\Om)^2$. Considering the functions $v_\ep^i$, $i=1,2$, which are defined by \refe{vpie}. and satisfy the convergences \refe{cvpie}., we have
\beq\label{Ee}
\ba{rl}
\dis E_\ep:= & \dis \int_\Om\left|Du_\ep-Du-v_1\,Dv_\ep^1-v_2\,Dv_\ep^2\right|^2 dx
\\ \ecart
= & \dis \int_\Om|Du_\ep|^2\,dx-\int_\Om|Du|^2\,dx+\int_\Om\left(v_1^2\,|Dv_\ep^1|^2+v_2^2\,|Dv_\ep^2|^2\right)dx
\\ \ecart
& \dis -\,2\int_\Om\left(v_1\,Du_\ep:Dv_\ep^1+v_2\,Du_\ep:Dv_\ep^2\right)dx+o(1).
\ea
\eeq
Putting $u_\ep$ in equation \refe{pSeoe}. and $u$ in equation \refe{Be}. we get
\beq\label{e6t2}
\ba{l}
\dis \int_\Om|Du_\ep|^2\,dx-\int_\Om|Du|^2\,dx=\langle f,u\rangle_{H^{-1}(\Om)^2,H^1_0(\Om)^2}-\int_\Om|Du|^2\,dx+o(1)
\\ \ecart
\dis =\int_\Om \Ga{u}\cdot u\,dx+o(1)={\ga\over 4\,(\ga^2+1)}\int_\Om |u|^2\,dx+o(1).
\ea
\eeq
Moreover, putting $V_\ep^i$ in estimate \refe{eDVie}. together with $V_\ep^i=e_i$ in $Q_{r_\ep}$, \refe{eVPie}., \refe{gie}., \refe{ga}., and using the $\ep Y$-periodicity of $Dv_\ep^i$, we get
\[
|Dv_\ep^i|^2\confai\lim_{\ep\to 0}\left({1\over\ep^2}\moy_Y|DV_\ep^i|^2\,dy\right)={\ga\over 4}\quad\mbox{weakly-$*$ in }\M(\bar{\Om}),
\]
hence since $v_i\in C(\bar{\Om})$,
\beq\label{e7t2}
\int_\Om\left(v_1^2\,|Dv_\ep^1|^2+v_2^2\,|Dv_\ep^2|^2\right)dx={\ga\over 4}\int_\Om|v|^2\,dx+o(1).
\eeq
Estimates \refe{Ee}., \refe{e6t2}. and \refe{e7t2}. thus imply that
\beq\label{e8t2}
E_\ep={\ga\over 4\,(\ga^2+1)}\int_\Om|u|^2\,dx+{\ga\over 4}\int_\Om|v|^2\,dx
-2\int_\Om\left(v_1\,Du_\ep:Dv_\ep^1+v_2\,Du_\ep:Dv_\ep^2\right)dx+o(1).
\eeq
On the other hand, applying the estimate \refe{eDvie}. with the function $v=v_i\,u_\ep$, $i=1,2$, and using the convergences \refe{cvpie}., \refe{gie}., \refe{ga}. and \refe{nucexS}., we obtain
\[
\ba{ll}
\dis \int_\Om Dv_\ep^i:Du_\ep\,v_i\,dx & \dis=\int_\Om Dv_\ep^i:D(v_i\,u_\ep)\,dx+o(1)
\\ \ecart
& \dis ={\ga_\ep^i\over\ep^2}\,e_i\cdot\left(\int_\Om{1_{\om_\ep}\over|Q_{r_\ep}|}\,u_\ep\,v_i\,dx
-\int_\Om{1_{Y\setminus Q}\over|Y\setminus Q|}\left({x\over\ep}\right)u_\ep\,v_i\,dx\right)+o(1)
\\ \ecart
& \dis ={\ga^2\over 4\,(\ga^2+1)}\int_\Om e_i\cdot(\ga\,I-J)\,u\,v_i\,dx-{\ga\over 4}\int_\Om u_i\,v_i\,dx+o(1).
\ea
\]
This combined with \refe{e8t2}. yields
\beq\label{e9t2}
\ba{ll}
E_\ep= & \dis {\ga\over 4\,(\ga^2+1)}\int_\Om|u|^2\,dx+{\ga\over 4}\int_\Om|v|^2\,dx
\\ \ecart
& \dis +\,{\ga\over 2\,(\ga^2+1)}\int_\Om u\cdot v\,dx+{\ga^2\over 2\,(\ga^2+1)}\int_\Om Ju\cdot v\,dx+o(1).
\ea
\eeq
Putting the function
\[
v:=-\,{1\over\ga^2+1}\left(I+\ga\,J\right)u
\]
in \refe{e9t2}. we get
\beq\label{e10t2}
E_\ep=\int_\Om\left|Du_\ep-Du-v_1\,Dv_\ep^1-v_2\,Dv_\ep^2\right|^2 dx\limo 0.
\eeq
Finally, since the sequences $v_\ep^i$ strongly converge to zero in $L^{2r\over r-2} (\Om)^2$ by \refe{cvpie}. and $u\in W^{1,r}(\Om)^2$, the strong convergence \refe{scue}. is a straightforward consequence of \refe{e10t2}.. \cqfd

%
%
\end{document}